\numberwithin{equation}{section}
\newtheorem{cor}{Corollary}[section]
\newtheorem{theorem}[cor]{Theorem}
\newtheorem{prop}[cor]{Proposition}
\newtheorem{proposition}[cor]{Proposition}
\newtheorem{lemma}[cor]{Lemma}
\newtheorem{question}[cor]{Question}
\theoremstyle{definition}
\newtheorem{definition}[cor]{Definition}
\theoremstyle{remark}
\newtheorem{remark}[cor]{Remark}
\newtheorem{example}[cor]{Example}
\newtheorem{conjecture}[cor]{Conjecture}
\newcommand{\cF}{\mathcal{F}}
\newcommand{\cZ}{\mathcal{Z}}
\def\Z{{\mathcal Z}}
\newcommand{\fP}{\mathfrak{P}}
\newcommand{\cz}{{\mathbb C}}
\newcommand{\rz}{{\mathbb R}}
\let\mR\RM
\let\mC\CM
\def\one{1}
\newcommand{\SU}{\operatorname{SU}}
\newcommand{\vol}{\mathop{\mathrm{vol}}}
\newcommand{\dvol}{\mathop{\mathrm{dvol}}}
\newcommand{\dime}{\mathop{\mathrm{dim}}}
\newcommand{\kernel}{\mathop{\mathrm{ker}}}
\newcommand{\ind}{\mathop{\mathrm{ind}}}
\newcommand{\grad}{\mathop{\mathrm{grad}}}
\def\Dirac{{\mathchoice{D\kern-8pt\slash}{D\kern-8pt\slash}{{\scriptstyle D\kern-5.5pt\slash}}{{\scriptscriptstyle D\kern-5pt\slash}}}}
\def\e{\varepsilon}
\def\P{\mathrm{P}}
\let\al\alpha
\let\la\lambda
\let\si\sigma
\def\a{\alpha}
\def\res{|}
\def\nz{\n^\Z}
\def\Hol{{\rm Hol}}
\def\Ric{{\rm Ric}}
\def\End{{\rm End}}
\def\tr{{\rm tr}}
\def\Id{{\rm Id}}
\def\Scal{{\rm Scal}}
\def\vol{{\rm vol}}
\newcommand\eref[1]{(\ref{#1})}
\newcommand{\be}{\begin{equation}}
\newcommand{\ee}{\end{equation}}
\def\beq{\begin{eqnarray*}}
\def\eeq{\end{eqnarray*}}
\def\p{\psi}
\def\P{\Psi}
\def\.{{\cdot}}
\def\gz{g^\Z}
\def\n{\nabla}
\def\nm{\nabla^g}
\long\def\willbeignored#1{}
\newcommand{\lamin}{\lambda_{\rm min}^+}
\newcommand{\mS}{\mathbb{S}}
\let\om\omega
\let\na\nabla
\let\pa\partial
\let\Si\Sigma
\let\ti\tilde
\def\sign{\mathrm{sign}}
\def\id{\mathrm{id}}
\def\gfl{g_{\rm flat}}
\def\cR{\mathcal{R}}
\def\cRinv{\cR_{U,\gfl}^{\rm inv}}
\def\cRi{\cR^{\rm inv}}
\begin{document}
\title{The Cauchy problems for Einstein metrics and parallel spinors}

\author{Bernd Ammann} 
\address{Bernd Ammann \\ Fakult\"at f\"ur  Mathematik \\ 
Universit\"at Regensburg \\
93040 Regensburg \\  
Germany}
\email{bernd.ammann@mathematik.uni-regensburg.de}

\author{Andrei Moroianu}
\address{Andrei Moroianu \\ Universit\'e de Versailles-St Quentin \\
Laboratoire de Math\'ematiques \\ UMR 8100 du CNRS\\
45 avenue des \'Etats-Unis\\
78035 Versailles, France }
\email{am@math.polytechnique.fr}

\author{Sergiu Moroianu}
\address{Sergiu Moroianu \\ Institutul de Matematic\u{a} al Academiei Rom\^{a}ne\\
P.O. Box 1-764\\RO-014700
Bu\-cha\-rest, Romania}
\email{moroianu@alum.mit.edu}
\date{\today}

\begin{abstract}
The restriction of a parallel spinor on some spin manifold $\Z$ to a hypersurface $M\subset \Z$ is a generalized Killing spinor 
on $M$. We show, conversely, that in the real analytic category, every spin manifold $(M,g)$ carrying a generalized Killing spinor 
$\psi$ can be isometrically embedded as a hypersurface in a spin manifold carrying a parallel spinor whose restriction to $M$ is $\psi$.
We also answer negatively the corresponding question in the smooth category.
\end{abstract}
\subjclass[2010]{35A10, 35J47, 53C27, 53C44, 83C05}
\keywords{Cauchy problem, parallel spinors, generalized Killing
  spinors, Einstein metrics.} 
\maketitle

\section{Introduction}

This paper aims to solve the problem of extending a spinor from a hypersurface to a parallel spinor on the total space. This problem is related to that of extending a Riemannian metric on a hypersurface to an Einstein metric on the total space, since parallel spinors can only exist over Ricci-flat manifolds. 

\subsection*{The Cauchy problem for Einstein metrics}
In the Lorentzian setting, Ricci-flat or more generally Einstein metrics
form the central objects of general relativity. Given a space-like hypersurface, a Riemannian metric, and
a symmetric tensor which plays the role of the second fundamental form, there always exists a local extension
to a Lorentzian Einstein metric \cite{cb}, \cite{dt}, provided that the local conditions given by the
contracted Gauss and Codazzi-Mainardi equations are satisfied. One crucial step in the proof
is the reduction to an evolution equation which
is (weakly) hyperbolic due to the signature of the metric. 
The corresponding equations in the Riemannian setting are  
(weakly) elliptic and no general local existence results are available.

In fact, if $(M,g)$ is any hypersurface of an Einstein manifold $(\Z,g^\Z)$, 
then the Weingarten tensor~$W$ is a symmetric endomorphism field on $M$ which satisfies certain
constraints (see \eqref{e1}--\eqref{e2} below, which are contractions of the
Gauss and Codazzi-Mainardi equations). Conversely, one can ask the following question:
\begin{quote}{\em
(Q1): If $W$ is a symmetric endomorphism field on $M$ which satisfies the system \eqref{e1}--\eqref{e2},
does there exist a isometric embedding of $M$ into a (Riemannian) Einstein manifold
$(\Z^{n+1},g^\Z)$ with Weingarten tensor $W$? Is $g^Z$ unique near $M$ up to isometry?}
\end{quote}

The uniqueness part is known to have a positive answer 
by recent results of Biquard \cite[Thm.\ 4]{biq} and Anderson-Herzlich \cite{ah}. 
The existence was settled in a paper by Koiso \cite{ko} in the real analytic setting. As we were unaware of that paper, in a previous draft of this work we had proved in detail that the answer to the existence part of the above Cauchy problem 
is positive in the analytic setting (Theorem \ref{rf}). We review the proof in Section 2 and show that the answer is negative, in general, in
the smooth setting (Proposition \ref{non-an}).

Let us also mention that DeTurck \cite{dt2} analyzed in the Riemannian setting the somewhat related
problem of finding a metric with prescribed nonsingular Ricci tensor.

\subsection*{Extension of generalized Killing spinors to parallel spinors}
Our main focus in this paper is the extension problem for spinors. 
In order to introduce it, we must recall some basic facts about restrictions of spin bundles to
hypersurfaces.
If $\Z$ is a Riemannian spin manifold,
any oriented hypersurface $M\subset\Z$ inherits a spin structure and
it is well-known that the restriction to $M$ of the complex
spin bundle $\Sigma\Z$ if $n$ is even (resp.\ $\Sigma ^+\Z$ if $n$ is odd) is canonically
isomorphic to the complex spin bundle $\Sigma M$
(cf.\ \cite{bgm}). If $W$ denotes the Weingarten tensor of $M$, the spin
covariant derivatives $\nz$ on $\Sigma \Z$ and $\nm$ on $\Sigma M$ are related by (\cite[Eq.\ (8.1)]{bgm})
\begin{align}
\label{spin}(\nz_X\P)|_M=\nm_X(\P|_M)-\tfrac12 W(X)\.(\P|_M),&&
\forall\  X\in TM,
\end{align}
for all spinors (resp.\ half-spinors for $n$ odd) $\P$ on $\Z$. 
We thus see that if $\P$ is a parallel spinor on
$\Z$, its restriction $\p$ to any hypersurface $M$ is a {\em generalized
  Killing spinor} on $M$, i.e. it satisfies the equation
\begin{align}
\label{gks}\nm_X\p=\tfrac12W(X)\.\p,&&\forall\ X\in TM,
\end{align}
and the symmetric tensor $W$, called the stress-energy tensor of $\p$,
is just the Weingarten tensor of the hypersurface $M$. It is
natural to ask whether the converse holds:

\begin{quote}{\em
(Q2): If $\p$ is a generalized Killing spinor on $M^n$, does there
  exist an isometric embedding of $M$ into a spin manifold
  $(\Z^{n+1},g^\Z)$ carrying a parallel spinor $\P$ whose restriction
  to $M$ is $\p$?}
\end{quote}

This question is the Cauchy problem for metrics with parallel
spinors asked in~\cite{bgm}.

The answer is known to be positive in several special cases: if the
stress-energy tensor~$W$ of $\p$ is the identity \cite{ba}, if $W$ is
parallel \cite{morel03} and if $W$ is a Codazzi tensor \cite{bgm}. Even earlier,
Friedrich \cite{friedrich:98} had worked out the 2-dimensional case $n+1=2+1$, which 
is also covered by \cite[Thm. 8.1]{bgm} since on surfaces the stress-energy of a generalized Killing 
spinor is automatically a Codazzi tensor. Some related embedding results were also obtained by
Kim \cite{kim}, Lawn--Roth \cite{lr10} and Morel \cite{morel05}.
The common feature of each of 
these cases is that one can actually construct in an explicit way the
``ambient" metric $g^\Z$ on the product $(-\e,\e)\times M$.

Our aim is to show that the same is true more generally, under the sole
additional assumption that $(M,g)$ and $W$ are analytic.

\begin{theorem} \label{m1}
Let $\p$ be a spinor field on an analytic spin
  manifold $(M^n,g)$, and $W$ an analytic field of symmetric endomorphisms of $TM$. Assume
that $\p$ is a generalized Killing spinor with respect to $W$, i.e.\ it satisfies \eqref{gks}.
Then there exists a unique metric $g^\Z$ of the form $g^\Z=dt^2+g_t$, with $g_0=g$, on a sufficiently 
small neighborhood $\Z$ of $\{0\}\times M$ inside $\rz\times M$ such that
  $(\Z,g^\Z)$, endowed with the spin structure induced from $M$,
  carries a parallel spinor $\P$ whose restriction to $M$ is $\p$.
\end{theorem}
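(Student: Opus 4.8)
The plan is to reduce Question (Q2) to the Cauchy problem for Einstein metrics solved in Theorem \ref{rf}. The key observation is that a generalized Killing spinor $\p$ on $(M,g)$ with stress-energy tensor $W$ forces $g$ and $W$ to satisfy the contracted Gauss and Codazzi-Mainardi constraints \eqref{e1}--\eqref{e2}: differentiating \eqref{gks} and using the spinorial Ricci identities, one expresses the curvature of $M$ acting on $\p$ in terms of $W$, $\na^gW$, and $W^2$; since $\p$ is nowhere zero (it has constant norm, being a generalized Killing spinor, so we may normalize $|\p|\equiv 1$), the Clifford-algebraic content of this identity yields exactly the constraint equations with vanishing ambient Ricci tensor. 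Thus by Theorem \ref{rf} there is a local real-analytic Ricci-flat metric $g^\Z=dt^2+g_t$ on a neighborhood $\Z$ of $\{0\}\times M$ in $\rz\times M$ realizing $(g,W)$ as first and second fundamental form of $M=\{0\}\times M$.

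Next I would produce the parallel spinor on $(\Z,g^\Z)$. Identify $\Si\Z|_{M_t}\cong\Si M_t$ (or $\Si^+\Z$ for $n$ odd) along each slice $M_t=\{t\}\times M$ as recalled before \eqref{spin}, and transport $\p$ to a spinor field $\P$ on $\Z$ by parallel transport along the $\dt$-curves, i.e.\ solve $\nz_{\dt}\P=0$ with initial condition $\P|_{M_0}=\p$. By construction $\P$ is parallel in the $t$-direction; it remains to show $\nz_X\P=0$ for all $X\in TM_t$ and all $t$. Using \eqref{spin}, this is equivalent to showing that the restriction $\p_t:=\P|_{M_t}$ is a generalized Killing spinor on $(M_t,g_t)$ with stress-energy tensor equal to the Weingarten tensor $W_t$ of $M_t$ in $\Z$; we know this holds at $t=0$. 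Set $Q_t(X):=\nm_X\p_t-\tfrac12 W_t(X)\.\p_t$ and compute $\nz_{\dt}Q_t$ using the Gauss-Weingarten formulas, the evolution equation $\dt g_t=-2W_t$ relating $W_t$ to $g_t$, and the Ricci-flatness of $g^\Z$ (which governs $\dt W_t$ via the Riccati-type equation $\dt W_t = -W_t^2 + \Ric^{g^\Z}(\cdot,\dt)^\sharp$-type terms, here reducing because $\Ric^{g^\Z}=0$). The point is that $\nz_{\dt}Q_t$ can be written as a pointwise linear expression (with analytic coefficients built from $W_t$ and curvature) in $Q_t$ itself; the crucial input making the zeroth-order term disappear is precisely that $g^\Z$ is Einstein, so that the integrability obstruction for $\P$ vanishes. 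Then $Q_t$ satisfies a linear first-order ODE in $t$ along each integral curve of $\dt$ with $Q_0=0$, hence $Q_t\equiv 0$, i.e.\ $\P$ is parallel. Restricting, $\P|_{M_0}=\p$ as desired.

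Finally, uniqueness: if $g^\Z=dt^2+g_t$ carries a parallel spinor restricting to $\p$, then each slice $M_t$ carries the restricted generalized Killing spinor with $W_t$ its Weingarten tensor, so $(g_t,W_t)$ solves the Einstein Cauchy problem with data $(g,W)$ at $t=0$; the uniqueness statement in Theorem \ref{rf} then pins down $g_t$, and hence $\P$ is determined by parallel transport. The real-analyticity hypothesis enters in two essential places: it is needed to invoke Theorem \ref{rf} (whose proof rests on Cauchy–Kovalevskaya, the elliptic ambient equation being non-hyperbolic), and it guarantees that the metric $g$ and all slices $g_t$ are analytic so that the spin bundle identifications and the ODE argument are legitimate.

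I expect the main obstacle to be the computation in the second paragraph: showing that the defect tensor $Q_t$ satisfies a \emph{homogeneous} linear ODE along the normal flow, with no inhomogeneous term. This is where one must carefully track how $\dt$ interacts with $\nm$ (the slice connections vary with $t$), use the evolution equations for $g_t$ and $W_t$, and crucially exploit $\Ric^{g^\Z}=0$ to kill the term that would otherwise obstruct propagation of the generalized Killing condition. A clean way to organize this is to work directly on $\Z$ with $\nz$ and the already-known $\nz_{\dt}\P=0$, rewriting everything in terms of the single object $\nz\P$ and applying $\nz_{\dt}$ to it; the second Bianchi identity and Ricci-flatness then give $\nz_{\dt}(\nz_X\P)=$ (curvature term)$\.\nz\P$, a homogeneous system.
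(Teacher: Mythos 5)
Your overall architecture coincides with the paper's: first show that \eqref{gks} forces the constraints \eqref{e1}--\eqref{e2} (this is Corollary \ref{srf}, and your sketch of it is correct, including the observation that $\p$ has constant norm and is nowhere zero), then invoke Theorem \ref{rf} to get the Ricci-flat metric $g^\Z=dt^2+g_t$, then extend $\p$ by parallel transport along normal geodesics and propagate the parallel condition in the tangential directions. The uniqueness argument is also the paper's.

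The gap is in the propagation step, and you correctly flagged it as the main obstacle but proposed a mechanism that does not work. Writing $A(X):=\nz_X\P$, the commutation of $\nz_\nu$ with $\nz_X$ gives
\[(\nz_\nu A)(X)=R^\Z(\nu,X)\P+A(W(X)),\]
and the term $B(X):=R^\Z(\nu,X)\P$ is \emph{not} a pointwise linear expression in $A$ (nor in $Q_t$): Ricci-flatness only kills the Clifford trace $\sum_i e_i\.R^\Z(e_i,X)\P+\nu\.R^\Z(\nu,X)\P$, not the individual curvature terms. So the defect tensor does not satisfy a homogeneous ODE along the integral curves of $\dt$, and Gronwall-type uniqueness is unavailable. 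The correct closure, which is what the paper does, is to adjoin $B$ and $C(X,Y):=R^\Z(X,Y)\P$ as additional unknowns and derive their $\nz_\nu$-evolution from the second Bianchi identity together with $\Ric^\Z=0$; the resulting equations for $B$ and $C$ contain \emph{tangential} derivatives $\nz_{e_i}B$ and $\nz_XB$, so the triple $(A,B,C)$ satisfies a first-order linear \emph{PDE} system with non-characteristic initial hypersurface, vanishing at $t=0$ (here $B|_{t=0}=0$ follows from the contracted formula \eqref{ric0} once $A=C=0$ at $t=0$). One then concludes by Cauchy--Kowalewskaya. This also corrects your account of where analyticity enters: it is needed not merely to make "the ODE argument legitimate" but because the propagation step is itself a Cauchy--Kowalewskaya unique-continuation argument for an elliptic-type linear system, which has no smooth counterpart; the analyticity of $g^\Z$ is automatic from Ricci-flatness, and the analyticity of $M$, $\p$ and hence of $(A,B,C)$ is what licenses the theorem.
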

In particular, the solution $g^\Z$ must be Ricci-flat.
Einstein manifolds are analytic but of course hypersurfaces can lose this structure so our
hypothesis is restrictive. Note that Einstein metrics with smooth initial 
data can be constructed for small time as constant sectional curvature metrics 
when the second fundamental form is a Codazzi tensor, see~\cite[Thm.\ 8.1]{bgm}. 
In particular in dimensions $1+1$ and $2+1$ Theorem \ref{m1} remains 
valid in the smooth category since the tensor $W$ associated to a generalized Killing spinor 
is automatically a Codazzi tensor in dimensions~$1$ and~$2$. 

The situation changes drastically in higher dimensions for smooth (instead of analytic)
generalized Killing spinors.
What we can still achieve then is to solve the Einstein equation (and the parallel spinor
equation) in Taylor series near the initial hypersurface. More precisely, starting from a smooth hypersurface $(M,g)$ with prescribed
Weingarten tensor $W$ we prove that there exist formal Einstein metrics $g^\Z$ such that
$W$ is the second fundamental form at $t=0$, i.e., we solve the Einstein equation modulo
rapidly vanishing errors. Guided by the analytic and the low dimensional ($n=1$ or $n=2$)
cases, one could be tempted to guess that actual germs of Einstein metrics do exist for any smooth initial data.
However this turns out to be false. Counterexamples were found very recently in some particular cases 
in dimensions 3 and 7 by Bryant \cite{bryant10}. 
We give a general procedure to construct counterexamples in all dimensions in Section \ref{counterex}.

Note that several particular instances of Theorem \ref{m1} have been proved in recent years, 
based on the characterization of generalized Killing spinors in terms of exterior forms in low dimensions.
Indeed, in dimensions 5, 6 and 7, generalized Killing spinors are equivalent to
so-called {\em hypo}, {\em half-flat} and {\em co-calibrated} $G_2$ structures respectively. In \cite{h03}
Hitchin proved that the cases $6+1$ and $7+1$ can be solved up to the local existence of a certain gradient flow. Later on, Conti and Salamon \cite{cs06}, \cite{cs07} treated the cases $5+1$, $6+1$ and $7+1$
in the analytical setting, cf.\ also \cite{c08} for further developments.

A construction related to the Cauchy problem for Einstein metrics has been studied starting with the work of Fefferman-Graham \cite{fg}
concerning asymptotically 
hyperbolic Poincar\'e-Einstein metrics. The starting hypersurface $(M^n,g_0)$
is then at infinite distance from the 
manifold $\Z=(0,\e)\times M$, the metric $g^\Z$ being conformal to a  metric $\bar{g}$ of class $C^{n-1}$ on the manifold with boundary $\overline\Z=[0,\e)\times M$:
\begin{align*}
g^\Z=x^{-2} \bar{g}, && \bar{g}=dx^2+ g_x
\end{align*}
such that 
the conformal factor $x$ is precisely the distance function to the boundary ${x=0}$
with respect to $\bar{g}$. The metric is required to be 
Einstein of negative curvature up to an error term which vanishes with
all derivatives at infinity. Such a metric always
exists; when~$n$ is odd, it is smooth down to $x=0$ and its Taylor series at infinity is determined 
by the initial metric $g_0$ and the symmetric transverse traceless tensor $g_n$
appearing as coefficient of $x^n$ in $g_x$, while in even dimensions some logarithmic terms must be allowed, more precisely $g_x$ is smooth as a function of $x$ and $x^n\log x$.

Let us stress that existence results of Einstein metrics with prescribed first 
fundamental form and Weingarten tensor clearly cannot hold globally in general
(Example \ref{exng}).

\subsection*{Counterexamples in the smooth setting}

In the second part of the paper (Section \ref{counterex}) we apply the existence results from the analytic setting 
to prove nonexistence of solutions for certain smooth initial data in any dimension at least $3$. 

The argument goes along the lines of works of the first author and his collaborators on the Yamabe 
problem and the mass endomorphism. We consider the functional
\[{\cF}(\phi):=\frac{\langle D_0\phi,\phi\rangle_{L^2}}{\|D_0\phi\|^2_{L^{2n/(n+1)}}}\]
defined on the $C^1$ spinor fields $\phi$ on a compact connected Riemannian spin manifold $(M,g_0)$ which are 
not in the kernel of the Dirac operator $D_0$. If the infimum of the 
lowest positive eigenvalue of the Dirac operator in the 
volume-normalized conformal class of $g_0$ is strictly 
lower than the corresponding eigenvalue for the standard sphere 
(Condition \eqref{ineq.strict} below), this functional attains its 
supremum in a spinor $\psi_0$ of regularity $C^{2,\alpha}$. 
Moreover, $\psi_0$ is smooth outside its zero set. 

To construct $g_0$ satisfying Condition \eqref{ineq.strict} we fix $p
\in M$ and we look at metrics on $M$ which are flat near $p$. If the topological index of $M$ 
vanishes in $KO^{-n}(pt)$, then for generic such metrics
the associated Dirac operator is invertible. The mass endomorphism at $p$ is defined as the 
constant term in the asymptotic expansion of the Green kernel of $D$ near $p$. Again for generic metrics,
this mass endomorphism is non-zero, which by a 
result of~\cite{ammann.humbert.morel:06}
ensures the technical Condition~\eqref{ineq.strict} for generic metrics 
which are flat near 
$p$. By construction this class of metrics contains metrics which are not conformally flat on some open subset of $M$,
i.e., whose Schouten tensor (in dimension $3$), resp.\ Weyl curvature (in higher dimensions) is nonzero on some
open set. We assume $g_0$ was chosen with these properties.

We return now to the spinor $\psi_0$ maximizing the functional $\cF$.
The Euler-Lagrange equation of $\cF$ at $\psi_0$ can be reinterpreted as follows: the 
Dirac operator with respect to the conformal metric $g:=|\psi_0|^{4/(n-1)}g_0$ admits an 
eigenspinor of constant length $1$, $\psi:=\tfrac{\psi_0}{|\psi_0|}$. 

If the dimension $n$ equals 
$3$, by algebraic reasons this spinor field must be a generalized Killing spinor 
with stress-energy tensor $W$ of constant trace.

The metric $g$ is defined on the complement $M^*$ of the zero set of $\psi_0$. This set 
is open, connected and dense in $M$ (Lemmata \ref{dense} and \ref{connected}). Recall that $g_0$ was chosen such that its
Schouten tensor vanishes identically on an open set of $M$ and is nonzero on another 
open set. Then the same remains true on $M^*$, and therefore on $M^*$ there exists 
no analytic metric in the conformal
class of $g_0$. In particular, the metric $g=|\psi_0|^{4/(n-1)}g_0$ cannot be analytic.

Assuming now that Theorem \ref{rf} continues to hold for smooth initial data, we could apply it to
$(M^*,g,W)$ to get an embedding in a Ricci-flat (hence analytic) Riemannian manifold $(\Z,\gz)$, 
with second fundamental form $W$. Since the trace of $W$ is constant by construction, 
$M$ would have constant mean curvature, which would imply that it were analytic 
(Lemma \ref{cmc.analytic}), contradicting the non-analyticity proved above.

The above construction actually yields counterexamples to the Cauchy problem for 
Ricci-flat metrics in the smooth setting in any dimension $n\geq 3$, by taking products
with flat spaces, see Lemma \ref{product}.

\subsection*{Acknowledgements} It is a pleasure to thank Olivier Biquard, Gilles Carron, Mattias Dahl, Paul Gauduchon,
Colin Guillarmou, Christophe Margerin, Yann Rollin and Jean-Marc Schlenker for helpful discussions.
We thank the DFG-Graduiertenkolleg GRK 1692 Regensburg for its support.
AM was partially supported by the contract ANR-10-BLAN 0105 
``Aspects Conformes de la G\'eom\'etrie'' and by
the LEA ``MathMode''. 
SM was partially supported by the contract PN-II-RU-TE-2011-3-0053 and by
the LEA ``MathMode''.
He thanks the CMLS at the Ecole Polytechnique for its hospitality during the writing of this paper.

\section{The Cauchy problem for Einstein metrics}

Let $(\Z,\gz)$ be an oriented Riemannian manifold of dimension $n+1$, and
$M$ an oriented hypersurface with induced Riemannian metric $g:=g^\Z|_M$. 
We start by fixing some notations. Denote by $\nz$ and $\nm$ 
the Levi-Civita covariant derivatives on $(\Z,g^\Z)$ and $(M,g)$, by $\nu$ the
unit normal vector field along $M$ compatible with the orientations,
and by $W\in\End(TM)$ the Weingarten tensor defined by
\begin{align}\label{w} \nz_X\nu=-W(X),&&\forall\ X\in TM.
\end{align}
Using the normal geodesics issued from $M$, the metric on~$\Z$ can be
expressed in a neighborhood $\Z_0$ of $M$ as 
$g^\Z=dt^2+g_t$, where $t$ is the distance function to $M$ and $g_t$
is a family of Riemannian metrics on $M$ with $g_0=g$ (cf. \cite{bgm}). The vector
field $\nu$ extends to $\Z_0$ as
$\nu=\partial/\partial t$ and \eqref{w} defines a symmetric endomorphism on
$\Z_0$ which can be viewed as a family $W_t$ of endomorphisms of $M$,
symmetric with respect to $g_t$, and satisfying (cf.\ \cite[Equation (4.1)]{bgm}):
\begin{align}\label{4.1}g_t(W_t(X),Y)=-\tfrac12\dot g_t(X,Y),&&\forall\ X,Y\in TM.
\end{align}
By \cite[Equations (4.5)--(4.8)]{bgm}, the Ricci tensor and the
scalar curvature of $\Z$ satisfy for every vectors $X,Y\in TM$
\begin{align}
\label{4.5}&\Ric^\Z(\nu,\nu)=\tr(W_t^2)-\tfrac12\tr_{g_t}(\ddot g_t),\\
\label{4.6}&\Ric^\Z(\nu,X)=d\tr(W_t)(X)+\delta^{g_t}(W)(X),\\
\label{4.7}&\Ric^\Z(X,Y)=\Ric^{g_t}(X,Y)+2g_t(W_tX,W_tY)+\tfrac12\tr(W_t)\dot
g_t(X,Y)-\tfrac12 \ddot g_t(X,Y),\\
\label{4.8}&\Scal^\Z=\Scal^{g_t}+3\tr(W_t^2)-\tr^2(W_t)-\tr_{g_t}(\ddot g_t).
\end{align}
where in \eqref{4.6} the divergence operator $\delta ^g:\End(TM)\to T^*M$ is defined
in a local $g$-orthonormal basis $\{e_i\}$ of $TM$ by
\be\label{div}\delta ^g(A)(X)=-\sum_{i=1}^ng((\nm_{e_i}A)(e_i),X).\ee
Using \eqref{4.5} and \eqref{4.8} we get
\be\label{4.9}-2\Ric^\Z(\nu,\nu)+\Scal^\Z=\Scal^{g_t}+\tr(W_t^2)-\tr^2(W_t).
\ee

Assume now that the metric $g^\Z$ is Einstein with scalar curvature $(n+1)\lambda$, i.e.
$\Ric^\Z=\lambda g^\Z$.
Evaluating
\eqref{4.6} and \eqref{4.9} at $t=0$ yields
\begin{align}\label{e1}&d\tr(W)+\delta ^{g}W=0,\\
\label{e2} &\Scal^{g}+\tr(W^2)-\tr^2(W)=(n-1)\lambda.
\end{align}
If $g_t:\End(TM)\to T^*M\otimes T^*M$ is the isomorphism defined by
$g_t(A)(X,Y):=g_t(A(X),Y)$ and
$g_t^{-1}:T^*M\otimes T^*M\to\End(TM)$ denotes its inverse, then taking
\eqref{4.5} into account, \eqref{4.7} reads
\be\label{4.10}\ddot g_t=2\Ric^{g_t}+\dot
g_t(g_t^{-1}(\dot g_t)\cdot,\cdot)-\tr(g_t^{-1}(\dot g_t))\dot g_t-2\lambda g_t.
\ee

Using the Cauchy-Kowalewskaya theorem, Koiso proved the following existence and unique continuation result for 
Einstein metrics starting from an analytic metric and an analytic stress-energy tensor satisfying the above constraints.

\begin{theorem}[\cite{ko}] \label{rf} 
Let $(M^n,g)$ be an analytic Riemannian
  manifold and let $W$ be an analytic symmetric endomorphism field on
  $M$ satisfying \eqref{e1} and \eqref{e2}. Then for
  $\e>0$, there exists a unique analytic germ near $\{0\}\times M$ of an Einstein
  metric $g^\Z$ with scalar curvature $(n+1)\lambda$ of the form $g^\Z=dt^2+g_t$ on 
  $\Z:=\rz\times M$, with $g_0=g$, whose Weingarten tensor at $t=0$ is $W$.
\end{theorem}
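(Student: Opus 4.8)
The plan is to read \eqref{4.10} as the genuine evolution equation of the Einstein system written in Gaussian coordinates relative to $M$, to solve it by the Cauchy--Kovalevskaya theorem, and then to show that the remaining components of the Einstein equation --- i.e.\ the constraints \eqref{e1}--\eqref{e2} --- propagate off $\{0\}\times M$ thanks to the contracted second Bianchi identity \eqref{rs}.

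\emph{Step 1: reduction to an evolution equation.} For any metric $\gz=dt^2+g_t$ the formulas \eqref{4.5}--\eqref{4.8} hold identically, and \eqref{4.7} (combined with \eqref{4.5}, as recalled in the text) shows that the purely tangential part $\Ric^\Z(X,Y)=\lambda g_t(X,Y)$, $X,Y\in TM$, of the Einstein equation is \emph{equivalent} to \eqref{4.10}. This is a quasilinear system, of order $2$ in $t$ and --- through the term $\Ric^{g_t}$ --- of order $2$ in the $M$-directions, and it is already in Cauchy--Kovalevskaya normal form: $\ddot g_t$ is an analytic function of $g_t$, $g_t^{-1}$, $\dot g_t$ and of the spatial $2$-jet of $g_t$, the second spatial derivatives entering only linearly (via the Ricci curvature) and $\dot g_t$ entering with only one $t$-derivative. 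The natural Cauchy data are the analytic tensors $g_0:=g$ and, by \eqref{4.1}, $\dot g_0:=-2g(W\cdot,\cdot)$, which is a symmetric $2$-tensor since $W$ is $g$-symmetric.

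\emph{Step 2: Cauchy--Kovalevskaya.} I would apply the Cauchy--Kovalevskaya theorem in a neighbourhood of each point of $M\times\{0\}$ and glue the local solutions using the uniqueness part of that theorem, obtaining on some open neighbourhood $\Z$ of $\{0\}\times M$ in $\rz\times M$ a unique analytic family $g_t$ of symmetric $2$-tensors solving \eqref{4.10} with $g_0=g$, $\dot g_0=-2g(W\cdot,\cdot)$. Shrinking $\Z$ we may assume each $g_t$ is a Riemannian metric. Put $\gz:=dt^2+g_t$ and, as in \eqref{4.1}, $W_t:=-\tfrac12 g_t^{-1}\dot g_t$, so $W_0=W$.

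\emph{Step 3: propagation of the constraints.} By construction the symmetric $2$-tensor $E:=\Ric^\Z-\lambda\gz$ on $\Z$ has vanishing restriction to $TM\otimes TM$ for every $t$; hence it is determined by the Hamiltonian and momentum quantities $\mathcal H:=E(\pa_t,\pa_t)$ and $\mathcal M:=E(\pa_t,\cdot)|_{TM}$, and moreover $\tr_{\gz}E=\mathcal H$, so $\Scal^\Z=(n+1)\lambda+\mathcal H$. Evaluating \eqref{4.9} and \eqref{4.6} at $t=0$ and using $E|_{TM\otimes TM}=0$ together with \eqref{e1}--\eqref{e2} gives $\mathcal H|_{t=0}=0$ and $\mathcal M|_{t=0}=0$. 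Now \eqref{rs} applied on $(\Z,\gz)$ reads $\delta^{\gz}\Ric^\Z=-\tfrac12 d\Scal^\Z=-\tfrac12 d\mathcal H$; since $\gz$ is parallel this becomes $\delta^{\gz}E=-\tfrac12 d\mathcal H$. Expanding the $\pa_t$- and $TM$-components of this identity, using $\nz_{\pa_t}\pa_t=0$ and $\nz_X\pa_t=-W_t(X)$, produces a \emph{closed} linear first-order system of the schematic form
\begin{align*}
\pa_t\mathcal H &= \mathrm{div}^{g_t}\mathcal M \;+\; (\text{terms of order }0\text{ in }\mathcal H,\mathcal M),\\
\pa_t\mathcal M &= \tfrac12\, d\mathcal H \;+\; (\text{terms of order }0\text{ in }\mathcal H,\mathcal M),
\end{align*}
with analytic coefficients built from $g_t$ and $W_t$, again in Cauchy--Kovalevskaya normal form. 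Since the pair $(\mathcal H,\mathcal M)$ is analytic and vanishes at $t=0$, the uniqueness part of Cauchy--Kovalevskaya forces $\mathcal H\equiv 0\equiv\mathcal M$, hence $E\equiv 0$: the metric $\gz$ is Einstein with $\Ric^\Z=\lambda\gz$, $\Scal^\Z=(n+1)\lambda$, and its Weingarten tensor along $\{0\}\times M$ is $W_0=W$. For uniqueness, any Einstein germ $dt^2+\bar g_t$ with scalar curvature $(n+1)\lambda$, $\bar g_0=g$ and Weingarten tensor $W$ at $t=0$ satisfies \eqref{4.10} (by \eqref{4.7}) with Cauchy data $\bar g_0=g$, $\dot{\bar g}_0=-2g(W\cdot,\cdot)$ (by \eqref{4.1}); the uniqueness in Cauchy--Kovalevskaya then yields $\bar g_t=g_t$.

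I expect Step 3 to be the main obstacle: one has to check carefully that the second Bianchi identity really closes the constraint quantities $(\mathcal H,\mathcal M)$ into a first-order system of Cauchy--Kovalevskaya type --- in particular that no second spatial derivatives and no further $t$-derivatives survive on the right-hand side. This is the Riemannian, analytic counterpart of the classical propagation-of-constraints argument in the Lorentzian Cauchy problem \cite{cb}; the signature is irrelevant here because in the analytic category one may use Cauchy--Kovalevskaya uniqueness instead of hyperbolic energy estimates.
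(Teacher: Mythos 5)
Your proposal is correct, and its overall architecture is the same as the paper's: solve the tangential evolution equation \eqref{4.10} by Cauchy--Kowalewskaya with data $g_0=g$, $\dot g_0=-2g(W\cdot,\cdot)$, glue by uniqueness, and then show that the remaining components of $\Ric^\Z-\lambda g^\Z$ satisfy a closed linear first-order system vanishing at $t=0$. The difference lies in how that propagation system is derived. The paper works intrinsically on the slices: it introduces $f_t:=\tfrac12((n-1)\lambda-\Scal^{g_t}-\tr(W_t^2)+\tr^2(W_t))$ and $\omega_t:=d\tr(W_t)+\delta^{g_t}W_t$ and computes $\dot f_t$ and $\dot\omega_t$ directly from \eqref{4.11}, the first variation of the scalar curvature, and a first-variation formula for the codifferential $\delta^{g_t}$ (Lemma \ref{var}, which takes up a nontrivial part of the proof), using only the $n$-dimensional Bianchi identity \eqref{rs} for $g_t$. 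You instead invoke the ambient contracted Bianchi identity on $(\Z,g^\Z)$ and decompose $\delta^{g^\Z}E$ into its normal and tangential parts; this is the classical Lorentzian propagation-of-constraints argument transplanted to the analytic Riemannian setting. The two are equivalent: once $E|_{TM\otimes TM}=0$, the Gauss--Codazzi formulas \eqref{4.6} and \eqref{4.9} identify $\mathcal M$ with $\omega_t$ and $\mathcal H$ with $2f_t$, and your schematic system is exactly \eqref{syst}. The point you flag as the main obstacle does work out: since $E$ has no purely tangential part, every term produced by expanding $(\nz_{e_i}E)(e_i,\cdot)$ and $(\nz_\nu E)(\nu,\cdot)$ with $\nz_\nu\nu=0$ and $\nz_X\nu=-W_t(X)$ is either a first spatial derivative of $\mathcal M$, the term $\tfrac12 d\mathcal H$, or a zeroth-order term in $(\mathcal H,\mathcal M)$ with coefficients built from $W_t$, so the system is genuinely closed and of Cauchy--Kowalewskaya type. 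Your route is shorter and more conceptual (it dispenses with Lemma \ref{var} entirely); the paper's route is more computational but entirely self-contained on $M$ and produces the explicit system \eqref{syst}, which is reused later for the formal smooth solution in Proposition \ref{prop.formal.solution}.
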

\begin{proof}[Sketch of proof]
In equation \eqref{4.10} the only term involving partial derivatives
of the metric $g_t$ along $M$ is $\Ric^{g_t}$, which is an analytic
expression in $g_t$ and its first and second order derivatives along $M$
which does not involve any derivative with respect to $t$. 

The second order Cauchy-Kowalewskaya theorem (see e.g.\ \cite{dieudonne})
shows that for every $x\in M$ there
exists a neighborhood $V_x\ni x$ and some $\e_x>0$ such
that the Cauchy problem \eqref{4.10} with initial data
\begin{align*}
g_0=g,&&
\dot g_0=-2W
\end{align*}
has a unique analytic solution $g_t$ on $(-\e_x,\e_x)\times V_x$.
Using the uniqueness of solutions for systems of linear ODE's, one can then prove that $g^\Z=dt^2+g_t$ is Einstein with scalar curvature $(n+1)\lambda$. By uniqueness, these metrics patch up to a global metric near $M\times\{0\}$.
\end{proof}

As direct consequences of Theorem \ref{rf}, one obtains the following embedding results for analytic metrics and conformal structures:
\begin{cor}\label{umbilic}
Let $(M^n,g)$ be an analytic Riemannian manifold of constant scalar curvature.
Then for every $\lambda\leq\frac{\Scal^g}{n-1}$, $M$ can be isometrically embedded as a totally umbilical hypersurface in an Einstein manifold 
$(\Z^{n+1},g^\Z)$ with Ricci constant $\lambda$, i.e., $\Ric^\Z=\lambda g^\Z$.
\end{cor}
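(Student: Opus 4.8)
The plan is to reduce Corollary \ref{umbilic} to Theorem \ref{rf} by choosing the Weingarten tensor $W$ to be a suitable multiple of the identity, $W=c\,\Id$, and checking that the two constraints \eqref{e1} and \eqref{e2} are satisfied for an appropriate constant $c$. First I would observe that for $W=c\,\Id$ with $c$ constant (which makes sense because $\Scal^g$ is constant), the Codazzi-type constraint \eqref{e1} holds automatically: $\tr(W)=nc$ is constant so $d\tr(W)=0$, and $\delta^g(c\,\Id)=0$ since the identity endomorphism is parallel. This is the totally umbilical condition, so the resulting hypersurface is automatically umbilical.

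Next I would handle the scalar constraint \eqref{e2}. With $W=c\,\Id$ we have $\tr(W^2)=nc^2$ and $\tr^2(W)=n^2c^2$, so \eqref{e2} becomes
\[
\Scal^g+nc^2-n^2c^2=(n-1)\lambda,
\]
that is $\Scal^g-n(n-1)c^2=(n-1)\lambda$, equivalently
\[
c^2=\frac{\Scal^g-(n-1)\lambda}{n(n-1)}.
\]
This equation has a real solution $c$ precisely when the right-hand side is nonnegative, i.e.\ when $\lambda\le\frac{\Scal^g}{n-1}$, which is exactly the hypothesis. Choosing such a $c$, the tensor $W=c\,\Id$ is analytic (it is constant) and symmetric, and satisfies both \eqref{e1} and \eqref{e2}.

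Finally I would invoke Theorem \ref{rf} with this data to produce a germ of an Einstein metric $g^\Z=dt^2+g_t$ near $\{0\}\times M$ with $\Ric^\Z=\lambda g^\Z$ and Weingarten tensor $W=c\,\Id$ at $t=0$; since $W$ is a multiple of the identity, $M\times\{0\}$ is totally umbilical in $(\Z,g^\Z)$, and $M$ is isometrically embedded since $g_0=g$. There is essentially no obstacle here: the only mild point to verify is the sign bookkeeping in the algebraic identity for $c^2$, and to note that the borderline case $\lambda=\frac{\Scal^g}{n-1}$ gives $c=0$, i.e.\ $M$ is totally geodesic, which is consistent. One may also remark that the constructed metric is unique as a germ by the uniqueness part of Theorem \ref{rf}, though the choice of sign of $c$ gives the two (isometric) orientations of the normal direction.
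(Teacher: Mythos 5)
Your proposal is correct and follows exactly the paper's argument: take $W=\alpha\,\id$ with $\alpha^2=\frac{\Scal^g}{n(n-1)}-\frac{\lambda}{n}=\frac{\Scal^g-(n-1)\lambda}{n(n-1)}$, note that \eqref{e1} is automatic and \eqref{e2} fixes $\alpha$, and apply Theorem \ref{rf}. The only difference is that you spell out the verification of the constraints, which the paper leaves implicit.
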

\begin{proof}
The tensor $W:=\alpha \id$ satisfies Equations \eqref{e1}, \eqref{e2} for $\alpha=\sqrt{\tfrac{\Scal^g}{n(n-1)}-\tfrac\lambda{n}}$
\end{proof}
A conformal structure $c$ on a manifold $M$ is called {\em analytic} if there exists an analytic atlas 
on $M$ and an analytic metric $g$ in $c$ (see Definition \ref{def.conf.ana} below).
\begin{cor}
Let $(M^n,c)$ be a compact analytic conformal manifold of Yamabe invariant $Y(M,c)$. 
Then $M$ can be conformally embedded
as a totally umbilical hypersurface in an Einstein
manifold $(\Z^{n+1},g^\Z)$ with $\Ric^\Z=\lambda g^\Z$ for 
every $\lambda$ with $\sign(\lambda)\leq \sign(Y(M,c))$.
\end{cor}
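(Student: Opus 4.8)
The plan is to reduce the statement to Corollary \ref{umbilic}. That corollary needs an \emph{analytic} metric of constant scalar curvature in the conformal class together with the inequality $\lambda\leq\frac{\Scal^g}{n-1}$; so the two things I must arrange are analyticity of a constant-scalar-curvature representative and the correct range of $\lambda$.

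First I would invoke the resolution of the Yamabe problem on the compact manifold $M$: the conformal class $c$ contains a metric $g$ of constant scalar curvature, and the sign of that constant equals $\sign(Y(M,c))$. The subtlety is that the Yamabe minimizer is a priori only smooth, so I would use the hypothesis that $c$ is analytic (Definition \ref{def.conf.ana}): fix an analytic atlas on $M$ and an analytic background metric $g_0\in c$, and write $g=u^{4/(n-2)}g_0$. Then $u$ is a smooth positive solution of the Yamabe equation, which is a semilinear elliptic equation with analytic coefficients (because $g_0$ is analytic) and with the nonlinearity $t\mapsto t^{(n+2)/(n-2)}$, which is real-analytic for $t>0$. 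The analytic elliptic regularity theorem (Morrey) then yields that $u$, hence $g$, is analytic. I expect this to be the main technical point of the argument; in dimension $n=2$ one argues identically using the prescribed Gauss curvature equation.

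Next I would fix the target $\lambda$ with $\sign(\lambda)\leq\sign(Y(M,c))$ and pass to a homothety $g\mapsto s\,g$, $s>0$, which rescales the constant scalar curvature to $s^{-1}\Scal^g$. If $Y(M,c)>0$, then $\Scal^g$ is a positive constant that can be made arbitrarily large, covering every $\lambda\in\rz$, which is exactly the set $\{\lambda:\sign(\lambda)\leq 1\}$. If $Y(M,c)=0$, then $\Scal^g\equiv 0$, so $\frac{\Scal^g}{n-1}=0$ and the admissible values are precisely those with $\lambda\leq 0$, i.e.\ $\sign(\lambda)\leq 0$. If $Y(M,c)<0$, then $s^{-1}\Scal^g$ can be chosen to be a negative constant as close to $0$ as we wish, so every $\lambda<0$ is reached with $\lambda\leq\frac{s^{-1}\Scal^g}{n-1}$. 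In each case I obtain an analytic metric $g'\in c$ of constant scalar curvature with $\lambda\leq\frac{\Scal^{g'}}{n-1}$.

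Finally I would apply Corollary \ref{umbilic} to $(M^n,g')$ with this value of $\lambda$: it produces an isometric embedding of $(M,g')$ as a totally umbilical hypersurface in an Einstein manifold $(\Z^{n+1},g^\Z)$ with $\Ric^\Z=\lambda g^\Z$. Since the first fundamental form of this embedding is $g'$, the induced conformal structure on $M$ is $[g']=c$, so the embedding is conformal, which completes the argument.
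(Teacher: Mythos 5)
Your proposal is correct and follows essentially the same route as the paper: solve the Yamabe problem, deduce analyticity of the conformal factor by elliptic analytic regularity, rescale by a constant to meet the condition $\lambda\leq\Scal^{g'}/(n-1)$, and apply Corollary \ref{umbilic}. If anything, you are slightly more careful than the paper's one-line regularity argument, since you correctly treat the Yamabe equation as semilinear with analytic nonlinearity $t\mapsto t^{(n+2)/(n-2)}$ for $t>0$ and you spell out the sign bookkeeping and the $n=2$ case.
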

\begin{proof} Let $g_0$ be some analytic metric in $c$. 
Using the solution to the Yamabe problem for compact manifolds we get a unit volume metric
 $g=u^{4/(n-2)}g_0\in c$ 
with constant scalar curvature $\Scal^g=Y(M,c)$. The function $u$ satisfies a linear elliptic second order differential equation 
(the conformal Laplacian)
with analytic coefficients, so $g$ is analytic. The result now follows from the previous corollary, after a suitable
constant rescaling of $g$.
\end{proof}

\subsection*{The Cauchy problem for smooth initial data}
It was proven recently by Biquard \cite[Thm.\ 4]{biq} and Anderson-Herzlich \cite{ah} that
even in the $C^\infty$ setting, given a hypersurface $M\subset\Z$, a Riemannian metric on~$M$
and a field of symmetric endomorphisms $W$, there exists  (up to diffeomorphisms preserving the hypersurface) 
at most one Einstein metric on~$Z$ with Weingarten tensor $W$ along $M$.

The small-time existence however is known to fail in general for elliptic (even linear) Cauchy problems with
$C^\infty$ initial data. In the particular case of the Cauchy problem for Einstein metrics, we first remark
that in small dimensions the short-time existence is always guaranteed by the construction of an explicit solution
in the smooth (and actually even $C^3$) setting.

Indeed,  in dimension $1$ we can embed any curve $(M,g)$ in a constant curvature surface
with prescribed extrinsic curvature function (identified with the scalar Weingarten tensor) $W$. 
In this case, the constraint equations are empty, 
and the metric is explicitly given by \cite[Theorem 7.2]{bgm}.

Similarly, in dimension $n=2$,
the $C^3$ initial value problem can always be solved for small time:
\begin{prop}
Let $M$ be a surface with $C^3$ Riemannian metric $g$, and let $W$ be a $C^3$
symmetric field of endomorphisms on $M$ satisfying \eqref{e1} and \eqref{e2}
for some $\lambda\in\rz$. Then there exists a  metric $g^\Z$ of constant sectional curvature  $\kappa=\lambda/2$
  on a neighborhood of $\{0\}\times M$ inside $\Z:=\rz \times M$ of the form $g^\Z=dt^2+g_t$, with $g_0=g$,
  whose Weingarten tensor at $t=0$ is $W$.
\end{prop}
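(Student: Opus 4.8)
The plan is to reduce the $n=2$ case to an explicit ODE rather than invoking Cauchy--Kowalewskaya, which would require analyticity we are no longer assuming. In dimension two a metric of constant sectional curvature $\kappa=\lambda/2$ is Einstein with $\Ric^\Z=\lambda g^\Z$, so by the computations in Section 2 (equations \eqref{4.10}, \eqref{4.11}) we are led to the evolution equation \eqref{4.11}, namely $\dot W_t=-g_t^{-1}\Ric^{g_t}+W_t\tr(W_t)-2\lambda\Id$, together with \eqref{4.1}, which recovers $\dot g_t$ from $W_t$. The crucial simplification in dimension two is that the Ricci tensor is determined pointwise by the metric and the Gauss curvature: $\Ric^{g_t}=K(g_t)\,g_t$, so $g_t^{-1}\Ric^{g_t}=K(g_t)\,\Id$. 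Hence \eqref{4.11} becomes $\dot W_t=\left(\tr(W_t)\right)W_t-\left(K(g_t)+2\lambda\right)\Id$, and similarly $\dot g_t=-2g_t(W_t\cdot,\cdot)$, a coupled first-order system that is no longer a PDE in any genuine sense once we track the right invariants.

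The key steps, in order, are as follows. First I would write $g_t$ and $W_t$ in a local $g_0$-orthonormal coframe and observe that, because we only need $C^3$ regularity, it suffices to produce a $C^1$-in-$t$ family $(g_t,W_t)$ solving \eqref{4.1}--\eqref{4.11}; standard ODE theory (Picard--Lindel\"of applied fibrewise over $M$, with the right-hand side depending on $g_t$ through its first and second spatial derivatives via $K(g_t)$) gives a unique short-time solution, with $\e_x>0$ and a neighborhood $V_x$ at each point, and $C^3$ dependence on the $C^3$ data. Here one must be slightly careful: $K(g_t)$ involves two spatial derivatives of $g_t$, so to stay in the ODE framework one differentiates the system once or twice in the $M$-directions and treats the jets of $g_t$ as additional unknowns, exactly as one does in proving persistence of regularity for such geometric flows; this is the place where the $C^3$ hypothesis on the initial data is consumed. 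Second, having built $g^\Z=dt^2+g_t$ on $(-\e_x,\e_x)\times V_x$, I would verify it has constant curvature $\kappa=\lambda/2$: by construction $W_t$ is the Weingarten tensor and \eqref{4.11} holds, so reading \eqref{4.7} backwards gives $\Ric^\Z(X,Y)=\lambda g^\Z(X,Y)$ for $X,Y\perp\nu$; the constraint equations \eqref{e1}--\eqref{e2} hold at $t=0$ by hypothesis, and the linear system \eqref{syst} derived in the proof of Theorem \ref{rf} --- whose derivation used only \eqref{4.1}, \eqref{4.11}, \eqref{rs}, \eqref{divf}, and is therefore valid in the $C^3$ category --- propagates $(f_t,\omega_t)\equiv 0$ in $t$ by ODE uniqueness, yielding $\Ric^\Z(\nu,X)=0$ and $\Scal^\Z-2\Ric^\Z(\nu,\nu)=(n-1)\lambda=\lambda$. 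Combined with $\Scal^\Z=\Ric^\Z(\nu,\nu)+n\lambda=\Ric^\Z(\nu,\nu)+2\lambda$ this forces $\Ric^\Z=\lambda g^\Z$, and in dimension $n+1=3$ an Einstein metric has constant sectional curvature, equal to $\lambda/2$. Third, I would patch: the local solutions are unique, hence agree on overlaps $(-\e,\e)\times(V_x\cap V_y)$ with $\e=\min\{\e_x,\e_y\}$, giving a well-defined $g^\Z$ on a neighborhood of $\{0\}\times M$, exactly as at the end of the proof of Theorem \ref{rf}.

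The main obstacle is the regularity bookkeeping in the first step: \eqref{4.11} is not literally an ODE because $K(g_t)$ is a second-order spatial differential operator applied to the unknown $g_t$, so one cannot directly apply Picard--Lindel\"of to the system as written. The standard fix --- and I expect this is what the authors do --- is to differentiate the system in the $M$-directions up to the order needed, adjoin the spatial $1$- and $2$-jets of $g_t$ and $W_t$ as new unknowns, and check that the prolonged system is a genuine Banach-space-valued ODE (the nonlinearities in $K(g_t)$ being polynomial in the jets and in $g_t^{-1}$, hence locally Lipschitz on the open set where $g_t$ stays positive-definite). One then has to argue that the jet-components of the solution are indeed the jets of $g_t$, i.e. that the prolongation is consistent; this is routine but is the only subtle point. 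An alternative, perhaps cleaner route --- given that the statement asks only for \emph{a} solution, not uniqueness --- is to bypass the evolution equation entirely and cite \cite[Theorem 8.1]{bgm}: in dimension two the Weingarten tensor $W$ of a generalized solution is automatically a Codazzi tensor (equation \eqref{e1} is precisely the contracted Codazzi identity, and in dimension two contracted Codazzi is equivalent to Codazzi), so the explicit constant-curvature construction of loc.\ cit.\ applies directly with $\kappa=\lambda/2$; the constraint \eqref{e2} then matches the compatibility condition of that construction. I would likely present the ODE argument for self-containedness and remark on the alternative.
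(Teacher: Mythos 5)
Your primary route has a genuine gap at exactly the point you flag as ``routine''. Equation \eqref{4.11} is not an ODE in any prolonged sense: $K(g_t)$ is a second-order spatial differential operator, so the $t$-evolution of the $k$-jet of $g_t$ involves its $(k+2)$-jet, and the prolonged system never closes --- there is no Banach-space-valued ODE here, and adjoining finitely many jets cannot fix this. This is precisely Hadamard's obstruction for non-characteristic elliptic Cauchy problems. A quick sanity check: your prolongation mechanism nowhere uses $n=2$ beyond writing $\Ric^{g_t}=K(g_t)g_t$, and $\Ric^{g_t}$ is a second-order spatial operator (polynomial in the $2$-jet of $g_t$ and in $g_t^{-1}$) in every dimension; if the argument worked it would give short-time existence for smooth data in all dimensions, contradicting Proposition \ref{non-an} and the counterexamples of Section \ref{counterex}. (A secondary issue: \eqref{syst} contains the spatial derivatives $\delta^{g_t}\omega_t$ and $df_t$, so ``ODE uniqueness'' does not propagate the constraints; outside the analytic category one would need uniqueness for that first-order symmetric hyperbolic system, or better, avoid the propagation argument altogether.)

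The alternative you sketch in your closing sentences is, in essence, the paper's actual proof: the proposition is a direct application of the explicit constant-curvature construction of \cite[Theorem 7.2]{bgm} (not Theorem 8.1, which is the parallel-spinor statement). In dimension $2$ the contraction $\Lambda^2T^*M\otimes TM\to T^*M$ is an isomorphism on the space of Codazzi defects, so \eqref{e1} is equivalent to the full Codazzi equation, and \eqref{e2} reads $K=\kappa+\det W$ with $\kappa=\lambda/2$, i.e.\ the Gauss equation; these are exactly the hypotheses (7.3)--(7.4) of loc.\ cit. The genuine dimension-$2$ mechanism is therefore not $\Ric^{g_t}=K(g_t)g_t$ but the elimination of $K(g_t)$ via the Gauss constraint, which reduces the radial evolution to the pointwise Riccati equation $\dot W_t=W_t^2-\kappa\,\Id$ with explicit solution, so that $g_t$ is given by a closed formula in $g$ and $W$; one then verifies, as in \cite{bgm}, that the resulting metric has constant curvature and that the construction only consumes $C^3$ regularity of the data. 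Had you led with this route instead of the ODE-by-prolongation one, your proof would coincide with the paper's.
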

\begin{proof}
Direct application of \cite[Theorem 7.2]{bgm}. Namely, in dimension
$2$ the hypotheses \eqref{e1}, \eqref{e2} are equivalent to \cite[Eq.\ (7.3)]{bgm}
resp.\ \cite[Eq.\ (7.4)]{bgm} with $\kappa=\lambda/2$. It follows, at least in the smooth case,
that $g_t$ can be constructed explicitly in terms of $g$ and $W$ such that 
$g^\Z$ has constant sectional curvature $\kappa$. It remains to note that the proof
of \cite[Theorem 7.2]{bgm} remains valid when $g$ and $W$ are of class $C^3$.
\end{proof}

In higher dimensions $n\ge 3$ the situation changes dramatically. In some particular cases
one can show that the analyticity of the initial data is not only sufficient but also necessary:

\begin{prop}\label{non-an}
A Riemannian manifold $(M^n,g)$ of constant scalar curvature can be isometrically embedded in an Einstein manifold 
$(\Z^{n+1},g^\Z)$ with Weingarten tensor $W=\alpha \id$ along $M$ if and only if $g$ is analytic.
\end{prop}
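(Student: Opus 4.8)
The plan is to prove the two implications separately, using Theorem \ref{rf} for the ``if'' direction and a regularity/analyticity argument for the ``only if'' direction.

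\textbf{The ``if'' direction.} Suppose $(M^n,g)$ has constant scalar curvature $\Scal^g$. Given $W=\alpha\,\id$ with $\alpha$ constant, Equation \eqref{e1} reads $d(n\alpha)+\delta^g(\alpha\,\id)=0$, which holds automatically since $\alpha$ is constant and $\delta^g(\id)=0$; Equation \eqref{e2} reads $\Scal^g+n\alpha^2-n^2\alpha^2=(n-1)\lambda$, i.e.\ $\Scal^g-n(n-1)\alpha^2=(n-1)\lambda$, which determines $\lambda$ (or, read the other way, determines $\alpha$ from $\lambda\le\Scal^g/(n-1)$, as in Corollary \ref{umbilic}). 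So once $g$ is analytic, $W=\alpha\,\id$ is trivially analytic, and Theorem \ref{rf} produces a germ of an Einstein metric $g^\Z=dt^2+g_t$ near $\{0\}\times M$ with Weingarten tensor $\alpha\,\id$ at $t=0$, i.e.\ an isometric embedding of $M$ as a totally umbilical hypersurface. This half is essentially Corollary \ref{umbilic}, so I would just cite it.

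\textbf{The ``only if'' direction.} This is where the real content lies, and I expect it to be the main obstacle. Assume $M$ embeds in an Einstein manifold $(\Z,g^\Z)$ with $\Ric^\Z=\lambda g^\Z$ and Weingarten tensor $W=\alpha\,\id$ for a (necessarily constant, by \eqref{e2} and constancy of $\Scal^g$) real number $\alpha$. The key point is that an Einstein metric is real-analytic in suitable (harmonic, or geodesic normal) coordinates; this is the classical DeTurck--Kazdan theorem. Hence $g^\Z$ is analytic, and in particular the ambient manifold $\Z$ carries an analytic structure in which $g^\Z$ is analytic. Now $M\subset\Z$ is a hypersurface with $W=\alpha\,\id$, i.e.\ $M$ is umbilical with constant mean curvature $n\alpha$; I would argue that such a hypersurface is an analytic submanifold of the analytic manifold $(\Z,g^\Z)$. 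When $\alpha=0$, $M$ is totally geodesic, hence (being a connected component of the fixed-point set of the local geodesic reflection, or the image of an analytic submanifold of the tangent space under the analytic exponential map) it is analytically embedded. When $\alpha\ne0$, one can realize $M$ locally as a level set of the analytic function obtained by flowing along the normal geodesics — concretely, in the Fermi coordinates $g^\Z=dt^2+g_t$ the hypersurface $\{t=\mathrm{const}\}$ is analytic and umbilical, and \eqref{4.11} with $\Ric^{g_t}=\lambda g_t$ (umbilical slices of an Einstein metric) shows $W_t$ stays a multiple of the identity; so $M$ is, up to an analytic reparametrization, one of these slices. Either way the induced metric $g=g^\Z|_M$ is analytic. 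This last step — that a CMC umbilical hypersurface of an analytic Einstein manifold inherits an analytic structure — is precisely the substance invoked later in the paper (``Lemma \ref{cmc.analytic}''), so I would state it as such a lemma and defer its proof, reducing Proposition \ref{non-an} to: \emph{Einstein $\Rightarrow$ analytic} (DeTurck--Kazdan) together with \emph{CMC umbilical hypersurface of an analytic Riemannian manifold is analytic}.

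\textbf{Main obstacle.} The delicate part is not the PDE existence (that is Theorem \ref{rf}) but the analyticity transfer: showing that the abstract smooth hypersurface $M$, once embedded in the analytic manifold $(\Z,g^\Z)$ with constant mean curvature, actually acquires an analytic structure compatible with its given smooth one, so that $g$ is analytic \emph{as a metric on the original $M$}. The subtlety is that a priori the embedding $M\hookrightarrow\Z$ is only smooth; one must upgrade its regularity. The right tool is that an umbilical CMC hypersurface satisfies an elliptic equation (a quasilinear equation of prescribed-mean-curvature type, or for $\alpha\ne0$ one can use the description via normal geodesics above) with analytic coefficients on the analytic manifold $\Z$; elliptic regularity for analytic data (Morrey--Nirenberg) then forces the hypersurface, and hence $g$, to be analytic. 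I would isolate this as the lemma ``\ref{cmc.analytic}'' and only sketch that its proof rests on analytic elliptic regularity.
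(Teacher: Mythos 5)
Your proof is correct and takes essentially the same route as the paper: the ``if'' direction is Theorem~\ref{rf} (equivalently Corollary~\ref{umbilic}), and the ``only if'' direction is exactly a reduction to Lemma~\ref{cmc.analytic} on analyticity of CMC hypersurfaces in analytic ambient manifolds (with Einstein~$\Rightarrow$~analytic by DeTurck--Kazdan made explicit). Your side remark about realizing $M$ as an analytic slice $\{t=\mathrm{const}\}$ in Fermi coordinates is circular as stated (the analyticity of those coordinates presupposes the analyticity of $M$), but it is not load-bearing since you ultimately invoke Lemma~\ref{cmc.analytic}, which is what the paper does.
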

\begin{proof}
The tensor $W$ satisfies Equations \eqref{e1}, \eqref{e2} for $\Scal^g=(n-1)(\lambda+n\alpha)$. The ``if" part thus follows from 
Theorem \ref{rf}. Conversely, if such an embedding exists, then $(M,g)$ is a constant mean curvature 
hypersurface in $(\Z,g^{\Z})$, so $g$ has to be analytic by Lemma \ref{cmc.analytic} below.
\end{proof}

Note that a metric with constant scalar curvature is automatically analytic in dimensions~1 and~2. Examples of
non-analytic constant scalar curvature metrics in dimensions at least~3 can be easily constructed: 
perturb the round metric on~$\mS^n$ to a metric $g$ which is non-conformally flat on some open set and 
conformally flat on some other open set and choose a constant scalar curvature metric in the conformal class of $g$  using the solution
of the Yamabe problem.

\subsection*{Formal solution in the smooth case}

The previous arguments show that without the hypothesis that $g$ and $W$ are analytic,
the nonlinear PDE system \eqref{4.10} has no solution in general. 
However, it is rather evident from \eqref{4.10} that the full Taylor series of $g^\Z$ is recursively determined by its first two coefficients, which are $g$ and $W$.
Let $\dot{C}^\infty(\Z)$ denote the space of tensors vanishing at $M$ together with all their derivatives. By the Borel lemma (see e.g. \cite{gg}), there exists a metric $g^\Z$ such that its Ricci tensor
satisfies the Einstein equation in the tangential directions
modulo $\dot{C}^\infty(\Z)$.
Then we can easily show
recursively that the right-hand sides of Equations \eqref{4.6} and \eqref{4.9} vanish modulo $\dot{C}^\infty(\Z)$. Thus $g^\Z$ is Einstein modulo $\dot{C}^\infty(\Z)$.

\begin{prop}\label{prop.formal.solution}
Let $(M^n,g)$ be a smooth Riemannian
manifold and let $W$ be a smooth symmetric field of endomorphisms of
$TM$ satisfying \eqref{e1} and \eqref{e2}. Then there exists on
$\Z:=(-\e,\e)\times M$ a metric $g^\Z$ of the form $g^\Z=dt^2+g_t$, with $g_0=g$,
whose Weingarten tensor at $t=0$ is $W$, and such that
\[\Ric^{\Z}-\lambda g^\Z\in\dot{C}^\infty(\Z).\]
Moreover, $g^\Z$ is unique up to $\dot{C}^\infty(\Z)$.
\end{prop}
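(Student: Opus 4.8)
The plan is to reduce Proposition~\ref{prop.formal.solution} to a recursive construction of the Taylor coefficients of $g_t$ at $t=0$, followed by the Borel lemma and a verification that the full Einstein system holds modulo $\dot C^\infty(\Z)$. First I would observe that equation~\eqref{4.10} is of the form $\ddot g_t = F(g_t,\dot g_t)$, where $F$ involves only tangential derivatives of $g_t$ (through $\Ric^{g_t}$) and no $t$-derivatives. Hence, given the prescribed initial data $g_0=g$ and $\dot g_0=-2W$, all higher derivatives $\partial_t^k g_t|_{t=0}$ for $k\ge 2$ are determined inductively: differentiate \eqref{4.10} $(k-2)$ times in $t$ and evaluate at $t=0$, expressing $\partial_t^k g_0$ as a universal smooth expression in $g$, $W$ and their tangential derivatives. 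This produces a formal power series in $t$ with smooth coefficients on $M$.

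Next I would invoke the Borel lemma (as cited, \cite{gg}) to produce an honest smooth family $g_t$ on $(-\e,\e)\times M$ whose Taylor expansion at $t=0$ agrees with the series just constructed; shrinking $\e$ if necessary so that $g_t$ is positive definite for all $|t|<\e$, set $g^\Z:=dt^2+g_t$. By construction $g_0=g$ and $\dot g_0=-2W$, so \eqref{4.1} gives that the Weingarten tensor of $M=\{0\}\times M$ is $W$. Since $g^\Z$ solves \eqref{4.10} to infinite order at $t=0$ by the choice of Taylor coefficients, we have $\ddot g_t - 2\Ric^{g_t}-\dot g_t(g_t^{-1}(\dot g_t)\cdot,\cdot)+\tr(g_t^{-1}(\dot g_t))\dot g_t+2\lambda g_t\in\dot C^\infty(\Z)$, which by \eqref{4.5} and \eqref{4.7} means exactly $\Ric^\Z(X,Y)=\lambda g^\Z(X,Y)$ and $\Ric^\Z(\nu,\nu)=\tfrac12\Scal^\Z-\tfrac{n-1}2\lambda$ hold modulo $\dot C^\infty(\Z)$ (using also the identity \eqref{4.9}).

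It then remains to handle the mixed component $\Ric^\Z(\nu,X)$ and the normalization of the scalar curvature, i.e.\ to show $f_t$ and $\omega_t$ vanish modulo $\dot C^\infty(\Z)$ just as in the analytic proof of Theorem~\ref{rf}. The point is that the derivation of the linear system \eqref{syst} from \eqref{4.11}, Lemma~\ref{var}, and the Bianchi identity \eqref{rs} used only pointwise tensor identities and the validity of \eqref{4.10}; since \eqref{4.10} now holds modulo $\dot C^\infty(\Z)$, the same manipulations give
\begin{align*}
\begin{cases}
\partial_t f_t=\delta^{g_t}\omega_t+2Hf_t + r_t,\\
\partial_t\omega_t=df_t+H\omega_t + \rho_t,
\end{cases}
\end{align*}
with $r_t,\rho_t\in\dot C^\infty$. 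The constraints \eqref{e1}, \eqref{e2} give $f_0=0$, $\omega_0=0$, and differentiating this linear system repeatedly in $t$ at $t=0$ shows inductively that $\partial_t^k f_0=0$ and $\partial_t^k\omega_0=0$ for all $k$, i.e.\ $f_t,\omega_t\in\dot C^\infty(\Z)$. Combined with the previous paragraph and the always-valid relation $\Scal^\Z=\Ric^\Z(\nu,\nu)+n\lambda$, this yields $\Ric^\Z-\lambda g^\Z\in\dot C^\infty(\Z)$. Finally, uniqueness modulo $\dot C^\infty(\Z)$ follows because any solution to $\Ric^\Z-\lambda g^\Z\in\dot C^\infty(\Z)$ of the form $dt^2+g_t$ with the prescribed $g_0,\dot g_0$ must satisfy \eqref{4.10} modulo $\dot C^\infty$, hence has the same Taylor coefficients at $t=0$ as those constructed above. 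The main obstacle is bookkeeping the error terms: one must check carefully that every step producing \eqref{syst}—in particular the use of Lemma~\ref{var} and of \eqref{4.11}—introduces only $\dot C^\infty$ errors and does not, say, divide by a quantity that is merely formally small; this is routine but needs to be stated precisely.
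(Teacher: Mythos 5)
Your proposal is correct and follows essentially the same route as the paper's own argument: determine the $t$-jet of $g_t$ recursively from \eqref{4.10} with initial data $(g,-2W)$, use the Borel lemma to realize it as a smooth metric, observe that the constraint system \eqref{syst} for $(f_t,\omega_t)$ holds modulo $\dot C^\infty(\Z)$, and propagate the vanishing of $(f_0,\omega_0)$ given by \eqref{e1}--\eqref{e2} through all orders. Your additional explicit remark on uniqueness-up-to-$\dot C^\infty$ and your care about only tracking $\dot C^\infty$ errors are both in the spirit of what the paper asserts without writing out.
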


\subsection*{A counterexample to long-time existence}
The preceding case of dimension $2+1$ hints that in general the Einstein metric 
$g^\Z$ cannot be extended on a complete manifold containing $M$ as a hypersurface. This sort of question is rather 
different from the arguments of this paper so we will only give an counterexample 
in dimension $1+1$ where global existence for the solution to the Cauchy problem fails. 
We restrict ourselves to the case of Ricci-flat metrics, which means vanishing Gaussian curvature in this dimension.

\begin{example}\label{exng}
Let $\Z$ be the incomplete flat surface obtained from $\cz^*$ (or from the complement of a small disk in $\cz$) by the following cut-and-paste procedure: cut along the positive real axis, then glue again after a translation of length $l>0$. More precisely, $x_+$ is identified with $(x+l)_-$ for all $x>\e$.
The resulting surface $\Z$ is clearly smooth and has a smooth flat metric including along the gluing locus. The unit circle in $\rz^2$ gives rise to a curve in~$\Z$ of curvature $1$ and length $2\pi$ with different endpoints $1_-$ and $(1+l)_-$. In a complete flat surface, a curve of curvature $1$ and length $2\pi$ must be closed (in fact smooth, since its lift to the universal cover must be a circle).
Therefore, the surface $\Z$ cannot be embedded in any complete flat surface. In particular, for any closed curve in $\Z$ circling around the singular locus, the interior cannot be continued to a compact flat surface with boundary.
\end{example}

\section{Spinors on Ricci-flat manifolds}

We come now to parallel and generalized Killing spinors, our main object of interest in this paper.
We keep the notation from the previous section. Our starting point is
the following corollary of Theorem \ref{rf}:

\begin{cor} \label{srf} Assume that $(M^n,g)$ is an analytic spin 
  manifold carrying a non-trivial generalized Killing spinor $\p$ 
  with analytic stress-energy tensor $W$. Then in a neighborhood
  of $\{0\}\times M$ in $\Z:=\rz \times M$ there exists a unique Ricci-flat
  metric $g^\Z$ of the form $g^\Z=dt^2+g_t$
  whose Weingarten tensor at $t=0$ is $W$.
\end{cor}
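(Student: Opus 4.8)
The plan is to deduce Corollary \ref{srf} from Theorem \ref{rf} by showing that an analytic generalized Killing spinor with analytic stress-energy tensor $W$ forces the pair $(g,W)$ to satisfy the constraint equations \eqref{e1}--\eqref{e2} with $\lambda=0$. Once this is known, Theorem \ref{rf} applies verbatim (with scalar curvature $(n+1)\cdot 0=0$) and produces a unique germ of an Einstein metric $g^\Z=dt^2+g_t$ with $\Ric^\Z=0$, i.e.\ a Ricci-flat metric, with Weingarten tensor $W$ at $t=0$; this is exactly the assertion. So the only real content is: a hypersurface admitting a generalized Killing spinor automatically satisfies the Gauss and Codazzi-Mainardi constraints with $\lambda = 0$.

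First I would recall that $\Sigma M$ (resp.\ $\Sigma^+ M$ for $n$ odd) restricts to $M$ from a spin bundle on the would-be ambient manifold, and that \eqref{gks} is the restriction of the parallel transport equation. The concrete step is to differentiate \eqref{gks}: compute the spinor curvature $R^{\Sigma M}(X,Y)\psi = \nm_X\nm_Y\psi - \nm_Y\nm_X\psi - \nm_{[X,Y]}\psi$ by substituting \eqref{gks} repeatedly. On one hand, $R^{\Sigma M}(X,Y)$ acts by $\tfrac14\sum_{i<j} g(R^g(X,Y)e_i,e_j)\,e_i\cdot e_j\cdot$, and Clifford-contracting this identity with $Y=e_k$ and summing recovers the Dirac-curvature term $\tfrac12\Ric^g(X)\cdot\psi$. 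On the other hand, the right-hand side produced from \eqref{gks} is an explicit algebraic expression in $W$, $\nabla^g W$ and Clifford multiplication. Matching the two, using that $\psi$ is nowhere zero on a dense set (or wherever $\psi\ne 0$ — which suffices by analyticity/continuity), and separating the Clifford-symmetric and Clifford-antisymmetric parts, yields precisely that the symmetric part gives $\Ric^g = (\tr W)W - W^2$ up to the usual normalization, whence $\Scal^g = \tr^2(W)-\tr(W^2)$ (this is \eqref{e2} with $\lambda=0$), while the antisymmetric part gives $d^{\nabla}W = 0$ in the appropriate sense, which after contraction is the Codazzi-type identity implying \eqref{e1}, namely $d\tr(W)+\delta^g W = 0$.

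Rather than redo this computation, I would simply cite it: this integrability computation for generalized Killing spinors is carried out in \cite{bgm} and in \cite{morel03}, where it is shown that the stress-energy tensor $W$ of a generalized Killing spinor satisfies exactly \eqref{e1} and the trace relation equivalent to \eqref{e2} with $\lambda=0$ (equivalently, $(M,g)$ with Weingarten tensor $W$ satisfies the contracted Gauss equation for a Ricci-flat ambient space). Thus the proof reduces to one sentence: \emph{by \cite[§4]{bgm} (see also \cite{morel03}) the hypotheses of Corollary \ref{srf} imply that $g$ and $W$ satisfy \eqref{e1} and \eqref{e2} with $\lambda=0$; now apply Theorem \ref{rf}.}

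The only subtlety — and the step I expect to be the main obstacle — is the passage from "parallel spinor equation holds" to "parallel spinor exists on $\Z$". Theorem \ref{rf} gives a Ricci-flat metric $g^\Z$ with the prescribed first and second fundamental form, but a priori only tells us about curvature, not about spinors. To complete Corollary \ref{srf} as stated (which asserts only the existence of the Ricci-flat metric, not yet of the parallel spinor) nothing more is needed; but if one wants the parallel spinor as well — which is Theorem \ref{m1} — one must argue separately that on the Ricci-flat $\Z$ the spinor $\psi$ propagates by $\nz \Psi = 0$ along the $t$-lines to a globally defined spinor, and that the tangential components of $\nz\Psi$ vanish because of \eqref{spin}, \eqref{gks}, the evolution \eqref{4.11} and the vanishing of the curvature obstruction $\Ric^\Z=0$. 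For the present corollary, however, I would keep the proof minimal: verify the constraints from the generalized-Killing condition and invoke Theorem \ref{rf}.
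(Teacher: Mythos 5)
Your proposal is correct and follows the paper's own route: show that a generalized Killing spinor forces the constraints \eqref{e1}--\eqref{e2} with $\lambda=0$, then invoke Theorem \ref{rf}. The paper carries out the integrability computation explicitly (differentiate \eqref{gks}, skew-symmetrize, Clifford-contract twice, and split the resulting spinorial identity into its scalar and vector parts) rather than citing it; one small caveat on your sketch is that \eqref{gks} does \emph{not} force the tensorial Gauss identity $\Ric^g=(\tr W)W-W^2$ --- the spinorial identity \eqref{ric1} retains the $X(\tr W)\p$ and $\sum_i e_i\.(\n_{e_i}W)(X)\.\p$ terms, and only after a second Clifford contraction does one obtain the contracted scalar relation \eqref{e2} together with the Codazzi-type \eqref{e1}, which is exactly what Theorem \ref{rf} needs.
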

\begin{proof}
We just need to check that the constraints \eqref{e1}, \eqref{e2} are a consequence of \eqref{gks}.
In order to simplify the computations, we will drop the reference to the metric $g$ 
and denote respectively by $\n$, $R$, $\Ric$ and $\Scal$ the Levi-Civita covariant derivative, 
curvature tensor, Ricci tensor and 
scalar curvature of $(M,g)$. As usual, $\{e_i\}$ will denote a local $g$-orthonormal basis
of $TM$. 

We will use the following two classical formulas in Clifford calculus. The first one 
is the fact that 
the Clifford contraction of a symmetric tensor $A$ only depends on its trace:
\be\label{trace} \sum_{i=1}^n e_i\.A(e_i)=-\tr(A).
\ee
The second formula expresses the Clifford contraction of the spin curvature in terms 
of the Ricci tensor (\cite{bfgk}, p. 16):
\begin{align}\label{ric0} \sum_{i=1}^n
e_i\. R_{X,e_i}\p=-\tfrac12\Ric(X)\.\p,&&\forall X\in TM,\ \forall \p\in\Sigma M.
\end{align}

Let now $\p$ be a non-trivial generalized Killing spinor satisfying \eqref{gks}. 
Being parallel with respect to a modified connection on $\Sigma M$,
$\p$ is nowhere vanishing (and actually of constant norm).

Taking a further covariant derivative in \eqref{gks} and skew-symmetrizing yields
\[
 R_{X,Y}\p=\tfrac14\left(W(Y)\.W(X)-W(X)\.W(Y)\right)\.\p+
\tfrac12\left((\n_XW)(Y)-(\n_YW)(X)\right)\.\p
\]
for all $X,Y\in TM$. In this formula we set $Y=e_i$, take the Clifford product with $e_i$ and 
sum over $i$. From \eqref{trace} and \eqref{ric0} we get
\begin{align*}
 \Ric(X)\.\p=&-\tfrac12\sum_{i=1}^n e_i\.\left(W(e_i)\.W(X)-W(X)\.W(e_i)\right)\.\p\\
&-\sum_{i=1}^n e_i\.
\left((\n_XW)(e_i)-(\n_{e_i}W)(X)\right)\.\p\\
=&\tfrac12\tr(W)W(X)\.\p+\tfrac12\sum_{i=1}^n\big(-W(X)\.e_i-2g(W(X),e_i)\big)\.W(e_i)\.\p\\
&+\n_X(\tr(W))\p+\sum_{i=1}^n e_i\.(\n_{e_i}W)(X)\.\p.
\end{align*}
whence
\be\label{ric1}
\Ric(X)\.\p=\tr(W)W(X)\.\p-W^2(X)\.\p+X(\tr(W))\p+\sum_{i=1}^n e_i\.(\n_{e_i}W)(X)\.\p.
\ee
We set $X=e_j$ in \eqref{ric1}, take the Clifford product 
with $e_j$ and sum over $j$. Using \eqref{trace} again we obtain
\begin{align*}
-\Scal\, \p&=-\tr^2(W)\p+\tr(W^2)\p+\n(\tr(W))\.\p+\sum_{i,j=1}^n e_j\.e_i\.(\n_{e_i}W)(e_j)\.\p\\
&=-\tr^2(W)\p+\tr(W^2)\p+d\tr(W)\.\p+\sum_{i,j=1}^n (-e_i\.e_j-2\delta_{ij})\.(\n_{e_i}W)(e_j)\.\p\\
&=-\tr^2(W)\p+\tr(W^2)\p+2d\tr(W)\.\p+2\delta W\.\p,
\end{align*}
which implies simultaneously
\eqref{e1} and \eqref{e2} (indeed, if $f\p=X\.\p$ for some real $f$ and vector
$X$, then $-|X|^2\p=X\.X\.\p=X\.(f\p)=f^2\p$, so both $f$ and $X$ vanish).
\end{proof}

\begin{theorem} \label{ps} Let $(\Z,\gz)$ be a Ricci-flat spin
  manifold with Levi-Civita connection $\nz$ and let $M\subset\Z$ be
  any oriented analytic hypersurface. Assume there 
  exists some spinor $\p\in C^\infty(\Sigma\Z\res_M)$ which is
  parallel along $M$:
\begin{align}
\label{hyp}\nz_X\p=0,&&\forall X\in TM\subset T\Z.
\end{align}
Assume moreover that the application $\pi_1(M)\to\pi_1(\Z)$
induced by the inclusion is onto.
Then there exists a parallel spinor $\P\in C^\infty(\Sigma \Z)$ such
that $\P\res_M=\p$.
\end{theorem}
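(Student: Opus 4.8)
The plan is to reconstruct the parallel spinor $\Psi$ on $\Z$ by propagating $\psi$ off the hypersurface $M$ along the normal geodesic flow, and then to verify that the resulting spinor is actually parallel in all directions, not just in the normal direction. First I would work in the normal geodesic coordinates $g^\Z = dt^2 + g_t$ on a tubular neighborhood $\Z_0$ of $M$, where $\nu = \partial_t$. Define $\Psi$ on $\Z_0$ by solving the ODE $\nabla^\Z_{\partial_t}\Psi = 0$ along each normal geodesic, with initial value $\psi$ at $t=0$; this has a unique smooth solution since the equation is a linear ODE along each geodesic. By construction $\Psi$ restricts to $\psi$ on $M$ and is parallel in the $\partial_t$ direction. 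The heart of the matter is to show $\nabla^\Z_X \Psi = 0$ for all $X\in TM$ as well.

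For that, I would introduce the auxiliary tensor-valued object $\Theta(X) := \nabla^\Z_X \Psi$ for $X$ tangent to the slices $\{t\}\times M$, and derive a first-order linear ODE in $t$ that it satisfies, using the Ricci-flatness of $g^\Z$. Concretely, commuting covariant derivatives, $\nabla^\Z_{\partial_t}\nabla^\Z_X \Psi - \nabla^\Z_X \nabla^\Z_{\partial_t}\Psi = R^\Z_{\partial_t, X}\Psi + \nabla^\Z_{[\partial_t, X]}\Psi$. Since $\nabla^\Z_{\partial_t}\Psi = 0$ identically and $[\partial_t, X]$ is again tangent (for $X$ a coordinate field on $M$), this becomes $\nabla^\Z_{\partial_t}\big(\nabla^\Z_X\Psi\big) = R^\Z_{\partial_t,X}\Psi + \nabla^\Z_{[\partial_t,X]}\Psi$, i.e. a linear system of the schematic form $\partial_t \Theta = \mathcal{A}_t \Theta + (\text{curvature term acting on }\Psi)$. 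The curvature term is the obstruction: I would need to show that $R^\Z_{\partial_t, X}\Psi$ can itself be expressed in terms of $\Theta$, so that the whole thing closes up as a homogeneous linear ODE $\partial_t\Theta = \mathcal{B}_t\Theta$. This is where Ricci-flatness enters: the Clifford contraction $\sum_i e_i \cdot R^\Z_{Y, e_i}\Psi = -\tfrac12\Ric^\Z(Y)\cdot\Psi = 0$ for a Ricci-flat metric, which combined with the first Bianchi identity lets one recover $R^\Z_{\partial_t, X}\Psi$ from the spatial derivatives $\nabla^\Z_{e_i}\Psi$, i.e. from $\Theta$. Since $\Theta$ vanishes at $t=0$ by hypothesis \eqref{hyp}, uniqueness for linear ODEs forces $\Theta\equiv 0$ on $\Z_0$, so $\Psi$ is parallel on the tubular neighborhood. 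Along the way one also needs that $\Psi$ has constant length, which is automatic once $\nabla^\Z\Psi = 0$.

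The remaining issue is global: the construction above only produces a parallel spinor on a tubular neighborhood $\Z_0$ of $M$, whereas we want one on all of $\Z$. Here I would use the holonomy/monodromy argument. A parallel spinor is determined by its value at one point together with parallel transport, so extending $\Psi$ from $\Z_0$ to $\Z$ amounts to checking that parallel transport of $\psi(m_0)$ (for a basepoint $m_0\in M$) around any loop in $\Z$ based at $m_0$ returns the same value. Since parallel transport along any loop contained in $\Z_0$ — in particular along any loop in $M$ — already fixes $\psi(m_0)$ (because $\Psi$ is parallel on $\Z_0$ and well-defined there), and since by hypothesis the map $\pi_1(M)\to\pi_1(\Z)$ is surjective, every loop in $\Z$ based at $m_0$ is homotopic to a loop in $M$; parallel transport depends only on the homotopy class of the loop (flatness of the spinor connection on the line of parallel spinors is not needed — one uses that $\Psi$ is locally parallel hence its parallel transport is locally trivial, so the holonomy representation on the span of parallel spinors factors through $\pi_1$), hence the monodromy is trivial. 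Therefore $\Psi$ extends to a globally defined parallel spinor on $\Z$ restricting to $\psi$ on $M$. I expect the main obstacle to be the closing-up step in the second paragraph: verifying carefully that the curvature term $R^\Z_{\partial_t,X}\Psi$ is genuinely controlled by $\Theta$ via Ricci-flatness, and that the ODE system is honestly homogeneous, so that the uniqueness argument applies.
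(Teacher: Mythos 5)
Your overall strategy coincides with the paper's: extend $\p$ by parallel transport along normal geodesics, prove horizontal parallelism by propagating the obstruction $\Theta(X)=\nz_X\P$ off $M$, and globalize via holonomy and the surjectivity of $\pi_1(M)\to\pi_1(\Z)$. However, the step you yourself flag as the main obstacle is a genuine gap, and it does not close the way you hope. Ricci-flatness gives only the Clifford-contracted identity $0=\sum_i e_i\cdot R^\Z(e_i,X)\P+\nu\cdot R^\Z(\nu,X)\P$, which expresses $R^\Z(\nu,X)\P$ in terms of the \emph{tangential curvature terms} $R^\Z(e_i,X)\P$ — not in terms of $\Theta$. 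Those tangential terms are commutators of tangential covariant derivatives of $\P$, i.e.\ first-order \emph{spatial} derivatives of $\Theta$, so the evolution equation for $\Theta$ is not an ODE in $t$ but a first-order PDE, and "uniqueness for linear ODEs" does not apply. The paper resolves this by enlarging the system: it introduces $A(X)=\nz_X\P$, $B(X)=R^\Z(\nu,X)\P$, $C(X,Y)=R^\Z(X,Y)\P$ as independent unknowns, derives their $t$-evolution using the \emph{second} Bianchi identity (the resulting equations for $B$ and $C$ contain tangential derivatives of $B$), and then invokes the Cauchy--Kowalewskaya theorem for the non-characteristic linear PDE system with vanishing Cauchy data. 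This is precisely where the analyticity of $M$ (together with the automatic analyticity of Ricci-flat metrics) is used — a hypothesis your argument never touches, which is a strong signal that the ODE reduction cannot work as stated.

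There is also a secondary gap in your globalization step. Parallel transport of $\p(m_0)$ around a loop in $\Z$ is \emph{not} homotopy-invariant merely because $\P$ is parallel on the tubular neighborhood $\Z_0$: a homotopy from a general loop to a loop in $M$ sweeps through regions of $\Z$ where no parallel spinor is yet known to exist, so the local triviality you invoke is unavailable there. What is actually needed is that the restricted spin holonomy group $\widetilde\Hol_0(\Z,x)$ fixes $\P_x$; the paper obtains this from the fact that for analytic (in particular Ricci-flat) metrics the restricted holonomy of $\Z$ coincides with the restricted holonomy of any open neighborhood $U$, so the local statement proved on $\Z_0$ upgrades to a global one. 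Once that is in place, one extends $\P$ to the universal cover by parallel transport and uses the surjectivity of $\pi_1(M)\to\pi_1(\Z)$ exactly as you propose to kill the deck-group monodromy.
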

\begin{proof}
Any Ricci-flat manifold is analytic, cf.\ \cite{kdt}, \cite{besse}, thus the
analyticity of $M$ makes sense. The proof is split in two parts.

\subsubsection*{Local extension}
Let $\nu$ denote the unit normal vector field along $M$. 
Every $x\in M$ has an open neighborhood $V$ in $M$ such that the
exponential map $(-\e,\e)\times V\to \Z$, 
$(t,y)\mapsto \exp_y(t\nu)$ is well-defined for some $\e>0$. Its
differential at $(0,x)$ being the identity, one can assume, by
shrinking $V$ and choosing a smaller $\e$ if necessary, that it maps
$(-\e,\e)\times V$ diffeomorphically onto some open neighborhood $U$
of $x$ in $\Z$. We extend the spinor $\p$ to a spinor $\P$ on $U$ by
parallel transport 
along the normal geodesics $\exp_y(t\nu)$ for every fixed $y$. It remains to
prove that $\P$ is parallel on $U$ in horizontal directions.

Let $\{e_i\}$ be a local orthonormal basis along $M$. We extend it on
$U$ by parallel transport along the normal geodesics, and notice that
$\{e_i,\nu\}$ is a local orthonormal basis on $U$. More generally,
every vector field $X$ along $V$ gives rise to a unique horizontal vector field, also
denoted $X$, on $U$ satisfying $\n_\nu X=0$. For every such vector
field we get
\be\label{f1}\nz_\nu(\nz_X\P)=R^\Z(\nu,X)\P+\nz_{[\nu,X]}\P=
R^\Z(\nu,X)\P+\nz_{W(X)}\P.\ee 

Since $\Z$ is Ricci-flat, \eqref{ric0} applied to the local orthonormal basis
$\{e_i,\nu\}$ of $\Z$ yields
\be\label{ric} 0=\tfrac12\Ric^\Z(X)\.\P=\sum_{i=1}^n
e_i\. R^\Z(e_i,X)\P+\nu\. R^\Z(\nu,X)\P\,. 
\ee
We take the Clifford product with $\nu$ in this relation,
differentiate again with
respect to $\nu$ and use the second Bianchi identity to obtain:
\begin{align*} \nz_\nu(R^\Z(\nu,X)\P)=&\nz_\nu\left(\nu\.\sum_{i=1}^n e_i\.
R^\Z(e_i,X)\P\right)=\nu\.\sum_{i=1}^n e_i\. (\nz_\nu R^\Z)(e_i,X)\P \\ =& 
\nu\.\sum_{i=1}^n e_i\. \big((\nz_{e_i} R^\Z)(\nu,X)\P+(\nz_X
R^\Z)(e_i,\nu)\P\big),
\end{align*}
whence
\be\label{nr}
\begin{split}\nz_\nu(R^\Z(\nu,X)\P)=\nu\.\sum_{i=1}^n e_i\. &\left( \nz_{e_i}
(R^\Z(\nu,X)\P)+R^\Z(W(e_i),X)\P
-R^\Z(\nu,\nz_{e_i}X)\P \right. \\
&-R^\Z(\nu,X)\nz_{e_i}\P
+\nz_X(R^\Z(e_i,\nu)\P)-R^\Z(\nz_Xe_i,\nu)\P\\
&\left. +R^\Z(e_i,W(X))\P-R^\Z(e_i,\nu)\nz_X\P\right).
\end{split}
\ee
Let $\nu ^\perp$ denote the distribution orthogonal to $\nu$ on $U$ and
consider the sections $A, B\in C^\infty((\nu ^\perp)^*\otimes\Sigma U)$
and $C\in C^\infty(\Lambda^2(\nu ^\perp)^*\otimes\Sigma U)$ defined
for all $X,Y\in\nu ^\perp$ by
\begin{align*}
A(X):=\nz_X\P,&& B(X):=R^\Z(\nu,X)\P,&&
C(X,Y):=R^\Z(X,Y)\P.
\end{align*}
We have noted that the metric $g^\Z$ is analytic since it is Ricci-flat.
From the assumption that $M$ is analytic and that
$\p$ is parallel along $M$ it follows that $\P$, and thus the tensors
$A$, $B$ and $C$, are analytic.

Equations \eqref{f1} and \eqref{nr} read in our new notation:
\be\label{f3}(\nz_\nu A)(X)=B(X)+A(W(X)),\ee
and
\be\label{f2}\begin{split} (\nz_\nu B)(X)=\nu\.\sum_{i=1}^n
e_i\. &\big((\nz_{e_i}B)(X)+C(W(e_i),X)
-R^\Z(\nu,X)A(e_i)\\
&-(\nz_XB)(e_i)+C(e_i,W(X))-R^\Z(e_i,\nu)A(X)\big).\end{split}
\ee
Moreover, the second Bianchi identity yields
\begin{align*} (\nz_\nu C)(X,Y)=&(\nz_\nu
  R^\Z)(X,Y)\P=(\nz_X R^\Z)(\nu,Y)\P+(\nz_Y 
R^\Z)(X,\nu)\P \\
=&\nz_X
(R^\Z(\nu,Y)\P)-R^\Z(\nz_X\nu,Y)\P-R^\Z(\nu,\nz_XY)\P-R^\Z(\nu,Y)\nz_X\P\\
&-\nz_Y
(R^\Z(\nu,X)\P)+R^\Z(\nz_Y\nu,X)\P+R^\Z(\nu,\nz_YX)\P+R^\Z(\nu,X)\nz_Y\P\\
=&(\nz_X B)(Y)+C(W(X),Y)-R^\Z(\nu,Y)\nz_X\P\\
&-(\nz_Y B)(X)+C(X,W(Y))+R^\Z(\nu,X)\nz_Y\P,
\end{align*}
thus showing that
\be\label{f4}\begin{split} (\nz_\nu C)(X,Y)=&(\nz_X
  B)(Y)+C(W(X),Y)-R^\Z(\nu,Y)(A(X))\\ 
&-(\nz_Y B)(X)+C(X,W(Y))+R^\Z(\nu,X)(A(Y)).\end{split}
\ee
The hypothesis \eqref{hyp} is equivalent to $A=0$ for
$t=0$. Differentiating this again in the direction of $M$ and
skew-symmetrizing yields $C=0$ for $t=0$. Finally, \eqref{ric} shows
that $B=0$ for $t=0$. We thus see that the section
$S:=(A,B,C)$ vanishes on along the hypersurface $\{0\}\times V$ of $U$.

The system \eqref{f2}--\eqref{f4} is a linear PDE for $S$
and the hypersurfaces $t={\rm constant}$ are clearly non-characteristic.
The Cauchy-Kowalewskaya theorem shows that $S$ vanishes
everywhere on $U$. In particular, $A=0$ on $U$, thus proving our claim.

%%%%%%%%%%%%%%%%%%%%%%%%%%%%%%%%%%%%%%%%%%%%%%%%%%%%%%%
\subsubsection*{Global extension}
%%%%%%%%%%%%%%%%%%%%%%%%%%%%%%%%%%%%%%%%%%%%%%%%%%%%%%

Now we prove that there exists a parallel spinor $\P\in
  C^\infty(\Sigma\Z)$ such that $\P\res_M=\p$.
Take any $x\in M$ and an open neighborhood $U$ like in Theorem \ref{ps}
on which a parallel spinor $\P$ extending $\p$ is defined. The
spin holonomy group $\widetilde\Hol(U,x)$ thus preserves $\P_x$.
Since any Ricci-flat metric is analytic (cf. \cite[p.\ 145]{besse}), the
restricted spin holonomy group $\widetilde\Hol_0(\Z,x)$ is equal to
$\widetilde\Hol_0(U,x)$ for every $x\in\Z$ and for every open neighborhood
$U$ of $x$.
By the local extension result proved above, $\widetilde\Hol_0(U,x)$ acts 
trivially on $\P_x$, thus showing that $\P_x$ can be extended (by parallel transport along every
curve in $\tilde{\Z}$ starting from $x$) to a parallel spinor $\tilde\P$
on the universal cover 
$\tilde\Z$ of $\Z$. The deck transformation group acts trivially
on $\tilde{\P}$ since every element in $\pi_1(\Z,x)$ can be represented by a curve in $M$
(here we use the surjectivity hypothesis) and $\P$ was assumed to be parallel along $M$.
Thus $\tilde\P$ descends to $\Z$ as a parallel spinor.
\end{proof}

This result, together with Corollary \ref{srf} yields the solution to the analytic
Cauchy problem for parallel spinors stated in Theorem \ref{m1}.

%%%%%%%%%%%%%%%%%%%%%%%%%%%%%%%%%%%%%%%%%%%%%%%%%%%%%%
\section{Construction of generalized Killing spinors} \label{counterex}
%%%%%%%%%%%%%%%%%%%%%%%%%%%%%%%%%%%%%%%%%%%%%%%%%%%%%%

The goal of this section is to describe a method which yields 
generalized Killing spinors on many $3$-dimensional spin Riemannian manifolds.
We will obtain both analytic and non-analytic generalized Killing spinors. 
The analytic ones will yield examples for applying Theorem~\ref{m1}.
The non-analytic ones only yield a formal Taylor series in the sense of Proposition 
\ref{prop.formal.solution},
and we will show that in general this solution is not the Taylor series of a Ricci-flat metric.
Thus we see that the analyticity assumption in Theorem~\ref{m1} 
cannot be removed. The method consists in combining techniques 
developed elsewhere. We state below the relevant results and briefly explain the underlying ideas. 

Note that further examples of manifolds with generalized Killing spinors
which can not be embedded as hypersurfaces in manifolds with parallel spinors
were recently constructed (although not explicitly stated), by Bryant \cite{bryant10} in the context of $K$-structures
satisfying the so-called {\em weaker torsion condition}.

%%%%%%%%%%%%%%%%%%%%%%%%%%%%%%%%%%%%%%%%%%%%%%%%%%%%%%%%%%%%%%%%
\subsection{Minimizing the first Dirac eigenvalue in a conformal class}
%%%%%%%%%%%%%%%%%%%%%%%%%%%%%%%%%%%%%%%%%%%%%%%%%%%%%%%%%%%%%%

In \cite{ammann:09} and \cite{ammann:habil} the following problem was studied:
Suppose $M$ is an $n$-dimensional compact spin manifold, $n\geq 2$ endowed with a fixed spin structure.
For any metric $g$ on $M$ let $D^g$ be the Dirac operator on $M$. The spectrum 
of $D^g$ is discrete, and all eigenvalues have finite multiplicity. 
The first positive eigenvalue of $D^g$ will be denoted by $\la_1^+(g)$. In general, the dimension
of the kernel of $D^g$ depends on $g$, and on many manifolds (in particular on all compact spin manifolds 
of dimension $n\equiv 0,1,3,7$ $\mod$ $8$, $n\geq 3$) 
metrics $g_i$ are known such that 
$g_i\to g$ in the $C^\infty$-topology, $\dime \kernel D^{g_i}< \dime \kernel D^g$ and $\la_1^+(g_i)\to 0$. 
Thus $g\mapsto \la_1^+(g)$ is not continuous when defined on the set of all metrics.

We now fix a conformal class $[g_0]$ on~$M$, 
and only consider metrics $g\in [g_0]$. Then the above
properties change essentially.
Due to the conformal behavior of the Dirac operator, the dimension of the kernel of $D^g$ is constant on $[g_0]$, 
and furthermore $[g_0]\to \mR_+$, $g\mapsto \la_1^+(g)$ is continuous in the $C^1$-topology. 
For any positive real number $\a$ one has
$\la_1^+(\al^2g)= \al^{-1}\la_1^+(g)$. The normalized first positive eigenvalue function
$[g_0]\to (0,\infty)$, $g\mapsto \la_1^+(g)\vol(M,g)^{1/n}$, is thus scaling invariant and continuous
in the $C^1$-topology. It is unbounded from above, see \cite{ammann.jammes:11},
and bounded from below 
by a positive constant,  
see \cite{lott:86} in the case $\kernel D^{g_0}=0$ and 
\cite{ammann:03,ammann:habil} for the general case.
We introduce 
\be\label{def.inf}
  \lamin(M,[g_0]):=\inf_{g\in [g_0]} \la_1^+(g)\vol(M,g)^{1/n}>0.
\ee

If there is a metric of positive scalar curvature in $[g_0]$, then the Yamabe 
constant 
\be\label{def.yamabe}
  Y(M,[g_0]):=\inf_{g\in[g_0]} \vol(M,g)^{(2-n)/n}\int_M \Scal^g\,dv^g
\ee
is positive, and 
Hijazi's inequality \cite{hijazi:86,hijazi:91} then yields 
\be\label{hijazi}
 \lamin(M,[g_0])^2 \geq \tfrac{n}{4 (n-1)}Y(M,[g_0]).
\ee
\begin{example} If $(M,g_0)=\mS^n$ is the sphere $S^n$ with the conformal structure given by the standard
metric $\si^n$ of volume $\omega_n$, then Obata's theorem \cite[Prop.~6.2]{obata:71.72} 
implies that
the infimum in \eref{def.yamabe} is attained in $g=\sigma$, and 
thus
$Y(\mS^n)=n(n-1)\,\om_n^{2/n}$. We obtain 
$\lamin(\mS^n)\geq \tfrac{n}{2}\om_n^{1/n}$. On the other hand $(M,\si)$ 
carries a Killing spinor to the Killing
constant $-1/2$, thus $\la_1^+(\si)=\tfrac{n}{2}$. As a consequence, equality is 
attained
in~\eref{hijazi}, the infimum in~\eref{def.inf} is attained in 
$g=\si$ and $\lamin(\mS^n)= \tfrac{n}{2}\om_n^{1/n}$.
\end{example}

Now let $(M,g_0)$ be again arbitrary. By ``blowing up a sphere''
one can show that $\lamin(M,[g_0])\leq \lamin(\mS^n)$,
see \cite{ammann:03,ammann.grosjean.humbert.morel:08}. This inequality 
should be seen as a spinorial analogue of Aubin's inequality between the Yamabe 
constants $Y(M,[g_0])\leq Y(\mS^n)=n(n-1)\,\om_n^{2/n}$. 
For the Yamabe constants one even gets a stronger statement: If $(M,g_0)$ is 
not conformal to the round sphere, then
\be\label{ineq.yam.strict}
   Y(M,[g_0])<Y(\mS^n).
\ee
This inequality leads to a solution of the Yamabe problem, see 
\cite{lee.parker:87}. It was proved in some cases by
Aubin \cite{aubin:76}. Later Schoen and Yau
\cite{schoen:84,schoen.yau:88} 
could solve the remaining cases, using the positive mass theorem.

It is thus natural to ask the following question which is still open in general.
\begin{question}
Under the assumption that $(M,[g_0])$ is not conformal to $(\mS^n)$, $n\geq 2$, does 
the inequality
\be\label{ineq.strict}
   \lamin(M,[g_0])<\lamin(\mS^n).
\ee
always hold?
\end{question}

We will explain
below that many Riemannian manifolds, in particular ``generic'' metrics on 
compact spin $3$-dimensional manifolds, do satisfy \eref{ineq.strict}.
It is interesting to notice that using~\eref{hijazi} the 
inequality~\eref{ineq.strict} would imply~\eref{ineq.yam.strict} without 
referring to the positive mass theorem.

In analogy to the Yamabe problem 
which consists in finding a smooth metric attaining the infimum 
in~\eref{def.yamabe}, one can try to find a metric attaining the infimum 
in~\eref{def.inf}. If this infimum is achieved in a metric $g\in [g_0]$,
then the corresponding Euler-Lagrange equation provides the existence
of an eigenspinor $\psi$ of constant length of eigenvalue $\la_1^+(g_0)$.
In dimension $n=3$, such constant-length eigenspinors
are generalized Killing spinors, see 
Subsection~\ref{subsec.get.gen.Kil}, and -- as said above -- it is the 
goal of this section to construct generalized Killing spinors. 

Unfortunately, it is unclear whether the infimum in ~\eref{def.inf}
can be achieved by a (smooth) 
metric. However, if we assume that ~\eref{ineq.strict} holds,
and if we allow degenerations in the conformal factor, the infimum
is attained. To explain the nature of these possible degenerations
precisely, we introduce the following. A generalized metric in the conformal 
class $[g_0]$ is a metric of the form $f^2g_0$ 
where $f$ is continuous on $M$ and smooth on $M^*:=M\setminus f^{-1}(0)$. 
Moreover, we only admit such generalized metrics for which $M^*$ is dense 
in $M$. 
The set of all such admissible generalized metric associated 
to the conformal class $[g_0]$ will be denoted by $\overline{[g_0]}$.

\begin{remark}
The above definitions are slight more restrictive than in~\cite{ammann:09}, 
but sufficient for the purpose of the present article 
and didactically simpler. For example,
the condition that~$M^*$ is supposed to be dense, guarantees 
that $\overline{[g_0]}\cap \overline{[g_1]}=\emptyset$ if $g_0$ and 
$g_1$ are not conformal.
\end{remark}

The functions $\la_+^1,\vol:[g_0]\to \mR^+$ extend continuously to functions
$\overline{[g_0]}\to \mR^+$, and the infimum in~\eref{def.inf} does not 
change when we replace $[g_0]$ by $\overline{[g_0]}$.
We then have
\begin{theorem}[{\cite[Theorem~1.1(B)]{ammann:09}}]\label{theo.minim}
Let $(M,g_0)$ be a compact Riemannian spin manifold of dimension $n\geq 2$. 
There exists a generalized metric $g\in \overline{[g_0]}$ at which 
the infimum in~\eref{def.inf} is attained. On $(M^*,g)$ there 
exists a spinor $\psi$ of constant length with $D\psi = \la^1_+(g) \psi$.
\end{theorem}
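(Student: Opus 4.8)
The plan is to prove Theorem~\ref{theo.minim} by a direct variational argument, treating it as a subcritical-type maximization problem regularized by the conformal covariance of the Dirac operator. First I would reformulate the problem: since for a metric $g=f^2g_0$ the volume-normalized eigenvalue $\lambda_1^+(g)\vol(M,g)^{1/n}$ depends only on the conformal class and scales correctly, it is natural to pass to the ``spinorial Yamabe functional'' already introduced in the counterexample discussion, namely
\[
\mathcal{F}(\phi)=\frac{\langle D_0\phi,\phi\rangle_{L^2}}{\|D_0\phi\|^2_{L^{2n/(n+1)}}},
\]
defined on $C^1$ spinors $\phi$ with $D_0\phi\neq 0$, where $D_0=D^{g_0}$. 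The point, as in \cite{ammann:03,ammann:09}, is that $\sup_\phi\mathcal{F}(\phi)^{-1}$ (up to a universal constant depending only on $n$) equals $\lamin(M,[g_0])$, and a maximizing spinor $\psi_0$, after the conformal change $g=|\psi_0|^{4/(n-1)}g_0$ and the normalization $\psi=\psi_0/|\psi_0|$, produces exactly the eigenspinor of constant length with $D^g\psi=\lambda_1^+(g)\psi$ asserted in the theorem. So the real content is the existence of a maximizer of $\mathcal{F}$ of sufficient regularity.

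Next I would set up the existence proof along classical lines. Take a maximizing sequence $\phi_k$ for $\mathcal{F}$, normalized so that $\|D_0\phi_k\|_{L^{2n/(n+1)}}=1$; equivalently, writing $\phi_k=D_0^{-1}\eta_k$ (using invertibility of $D_0$ on the orthogonal complement of its kernel, and handling the kernel separately as in \cite{ammann:03}), one maximizes $\langle D_0^{-1}\eta,\eta\rangle$ over $\eta$ with $\|\eta\|_{L^{2n/(n+1)}}=1$. Here the Sobolev embedding $D_0^{-1}\colon L^{2n/(n+1)}\to L^{2n/(n-1)}$ is the critical one, so weak limits need not be maximizers — this is precisely where the strict inequality~\eqref{ineq.strict}, $\lamin(M,[g_0])<\lamin(\mS^n)$, enters: it rules out concentration of the maximizing sequence at a point (a ``bubble'' modeled on the round sphere), by the usual concentration-compactness dichotomy. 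Granting the hypothesis of Theorem~\ref{theo.minim} is stated so as to include this (or, in the generalized-metric formulation, one argues that the extended functional on $\overline{[g_0]}$ still attains its infimum because the only obstruction to compactness is sphere-bubbling whose energy cost is exactly $\lamin(\mS^n)$). One then obtains a nonzero weak limit $\psi_0\in H_1^{2n/(n+1)}$ which is a weak solution of the Euler--Lagrange equation $D_0\psi_0=\lambda|\psi_0|^{2/(n-1)}\psi_0$ for the appropriate $\lambda>0$.

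Then I would upgrade regularity: the Euler--Lagrange equation is a subcritical-looking nonlinear Dirac equation, and elliptic bootstrapping (as in \cite{ammann:03,ammann:09}) gives $\psi_0\in C^{2,\alpha}$ away from its zero set, with $\psi_0$ continuous on all of $M$ after rescaling so that $|\psi_0|$ plays the role of the conformal factor $f$; the zero set is closed with dense complement $M^*$ (this density is where Lemmata~\ref{dense} and~\ref{connected} would be invoked, though for the present statement one only needs $M^*$ dense so that $g=|\psi_0|^{4/(n-1)}g_0\in\overline{[g_0]}$). On $M^*$ the conformal covariance of the Dirac operator translates the Euler--Lagrange equation into $D^g\psi=\lambda_1^+(g)\psi$ with $|\psi|_g\equiv 1$, and a variational comparison shows $\lambda_1^+(g)\vol(M,g)^{1/n}=\lamin(M,[g_0])$, i.e. the infimum is attained at $g$. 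The main obstacle is unquestionably the compactness step: controlling the loss of mass in a critical maximizing sequence and showing that under the hypothesis the only way compactness can fail is via a sphere bubble whose presence would force $\lamin(M,[g_0])\geq\lamin(\mS^n)$, contradicting~\eqref{ineq.strict}. This is the heart of \cite{ammann:09} and I would cite it as Theorem~\ref{theo.minim} rather than reprove it; the other steps (variational setup, Euler--Lagrange derivation, elliptic regularity, conformal change) are by comparison routine.
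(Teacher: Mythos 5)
Your proposal is correct and follows essentially the same route as the paper: reformulate the minimization of \eqref{def.inf} as the maximization of $\cF_{2n/(n+1)}$ (citing \cite[Prop.~2.3]{ammann:09} for $\mu_{2n/(n+1)}^{g_0}=1/\lamin(M,[g_0])$), defer the hard attainment/compactness step to \cite[Theorem~1.1(A)]{ammann:09} (the paper phrases it via subcritical regularization of the functional rather than your concentration--compactness language, but this is the same mechanism), then pass through the Euler--Lagrange equation, the conformal change $g=|D^{g_0}\psi_0|^{4/(n+1)}g_0=\mathrm{const}\cdot|\psi_0|^{4/(n-1)}g_0$, and the density of $M^*$ via the weak unique continuation Lemma~\ref{dense}. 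The only caveat worth noting is that the theorem as stated carries no hypothesis \eqref{ineq.strict}, a point you flag but do not fully resolve -- exactly as in the paper's own sketch.
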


The key idea in the proof of this theorem is to reformulate
the problem of minimizing ~\eref{def.inf} as a variational problem. For this we define
\begin{align}\label{def.F}
  \cF_q(\phi)=  \frac{\int\<D^{g_0}\phi ,\phi\>_{g_0}\dvol^{g_0}} {\|D^{g_0}\phi\|_{L^q(g_0)}^2}, &&
     \mu_q^{g_0}:= \sup \cF_q^{g_0}(\phi),
\end{align}
where the supremum runs over all spinors $\phi$ of regularity $C^1$ which are not in the kernel of $D^{g_0}$.
It was shown in \cite[Prop.~2.3]{ammann:09}
that for $q=\tfrac{2n}{n+1}$ we have 
  $$\mu_{2n/(n+1)}^{g_0}=\tfrac{1}{\lamin(M,[g_0])}.$$
Furthermore the infimum in \eqref{def.inf} is attained in a smooth metric $g\in[g_0]$ if and only if 
there is a nowhere vanishing spinor $\psi_0$ which attains the supremum in~\eqref{def.F}. 
If the infimum is attained in $g$ and the supremum
in $\psi_0$, then both are related via 
\begin{equation}\label{spinormetric}
g=|D^{g_0}\psi_0|^{4/(n+1)}g_0.
\end{equation}

\begin{prop}[{\cite[Theorem~1.1~(A)]{ammann:09}}]\label{prop.sup.attain}
Under the condition~\eqref{ineq.strict} the supremum is attained in a spinor $\psi_0$ of regularity $C^{2,\alpha}$ for small $\al>0$.
\end{prop}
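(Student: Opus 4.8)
The plan is to recognize the functional $\cF_q$ with $q=2n/(n+1)$ as a subcritical variational problem on the space of $C^1$ spinors, and to exploit the strict inequality \eqref{ineq.strict} to rule out the loss of compactness that would otherwise occur at the critical Sobolev exponent. First I would pass to the completion of $C^1$-spinors in the natural norm adapted to $\cF_q$; since $q<2$ (note $2n/(n+1)<2$ for all $n\geq 2$), the relevant Sobolev embedding is \emph{not} critical, and one expects a priori better behavior than for the Yamabe-type problem, but the nonlocal operator $D^{-1}$ appearing implicitly in the functional still involves a borderline exponent. The standard strategy is to take a maximizing sequence $\phi_i$ for $\cF_{q}^{g_0}$, normalize $\|D^{g_0}\phi_i\|_{L^q}=1$, and extract a weakly convergent subsequence $D^{g_0}\phi_i\rightharpoonup u$ in $L^q$; writing $\phi_i=(D^{g_0})^{-1}u_i$ modulo the (finite-dimensional) kernel, one controls $\phi_i$ in a Sobolev space by elliptic regularity for $D^{g_0}$.

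The key step is the concentration-compactness dichotomy: either the sequence $|D^{g_0}\phi_i|^q\,\dvol$ concentrates at a point (vanishing of the weak limit), or it converges, up to passing to a subsequence, to a genuine maximizer. The strict inequality \eqref{ineq.strict}, i.e.\ $\lamin(M,[g_0])<\lamin(\mathbb{S}^n)$, equivalently $\mu_{2n/(n+1)}^{g_0}>\mu_{2n/(n+1)}^{\sigma^n}$, is precisely what forbids concentration: a bubble forming at a point $p$ would, after rescaling, converge to an extremal object on flat $\rz^n$ (or on $\mathbb{S}^n$ via stereographic projection), contributing at most the sphere value $\mu^{\sigma^n}$ to the functional, so concentration would force $\mu^{g_0}_q\leq\mu^{\sigma^n}_q$, contradicting \eqref{ineq.strict}. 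Hence the weak limit $\psi_0$ is nonzero and, by weak lower semicontinuity of the norm together with the normalization, actually attains the supremum.

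Finally I would bootstrap the regularity of the maximizer $\psi_0$. It satisfies the Euler--Lagrange equation of $\cF_q$, which reads schematically $D^{g_0}\psi_0=\mu\,|D^{g_0}\psi_0|^{q-2}D^{g_0}\psi_0$ (after scaling), a semilinear first-order elliptic system with a subcritical nonlinearity of exponent $q-1=\tfrac{n-1}{n+1}<1$. Elliptic $L^p$-estimates for $D^{g_0}$ promote $\psi_0$ from the initial Sobolev regularity to $W^{1,p}$ for all finite $p$, hence to $C^{0,\beta}$ for every $\beta<1$; then $|D^{g_0}\psi_0|^{q-2}$ is Hölder continuous away from the zero set, and Schauder estimates give $\psi_0\in C^{1,\beta}$, so that $D^{g_0}\psi_0\in C^{0,\beta}$, and one more application of Schauder theory yields $\psi_0\in C^{2,\alpha}$ for small $\alpha>0$. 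I expect the main obstacle to be the first part — making the concentration-compactness argument rigorous for the nonlocal functional $\cF_q$, in particular identifying the blow-up limit on $\mathbb{R}^n$ with the extremal configuration on the sphere and carrying out the energy comparison cleanly; the regularity bootstrap, by contrast, is routine elliptic theory once the nonlinearity is seen to be subcritical. Since the statement is attributed to \cite[Theorem~1.1~(A)]{ammann:09}, I would in practice simply cite that reference for the full details.
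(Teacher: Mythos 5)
Your proposal takes a genuinely different route from the one the paper sketches (and from the reference \cite[Theorem~1.1~(A)]{ammann:09} that it cites). The paper describes the \emph{subcritical approximation} strategy in the spirit of the Yamabe problem as in \cite{lee.parker:87}: one first replaces the critical functional $\cF_{2n/(n+1)}$ by a family of perturbed/regularized functionals (e.g.\ using exponents strictly below the critical one), where maximizers exist by standard compactness; one then lets the perturbation parameter tend to its limit and uses the strict inequality \eqref{ineq.strict} only in the last step to show that the approximate maximizers do not concentrate but converge to a maximizer of the original functional. You instead apply a direct concentration--compactness dichotomy to a maximizing sequence for the unperturbed functional, identifying any bubble with a sphere extremal and invoking \eqref{ineq.strict} to exclude that alternative. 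Both routes are standard in critical-exponent variational problems and would in principle establish the existence part; the subcritical route has the technical advantage that one only has to control a one-parameter limit of already smooth solutions of Euler--Lagrange equations, which makes the regularity and non-triviality analysis cleaner, whereas your route requires carrying out the blow-up and energy-quantization analysis rigorously for the nonlocal functional, which is exactly the part you flag as the main obstacle.

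One point is wrong as stated: the nonlinearity in the Euler--Lagrange equation is \emph{not} subcritical. After the normalization used in the paper, the equation reads $D^{g_0}\psi_0=\lamin(M,[g_0])\,|\psi_0|^{2/(n-1)}\psi_0$ with $\|\psi_0\|_{L^{2n/(n-1)}}=1$; the exponent $1+2/(n-1)=(n+1)/(n-1)$ is exactly the critical one for the first-order Sobolev embedding adapted to $D$, which is precisely why a concentration alternative can occur and why \eqref{ineq.strict} is needed at all. Your claim that the exponent is $q-1=(n-1)/(n+1)<1$ comes from writing the stationarity condition in terms of $D\psi_0$ rather than $\psi_0$, but that form of the equation is not the one to which the elliptic bootstrap is applied. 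Consequently the regularity bootstrap is not ``routine subcritical elliptic theory'': one needs a Brezis--Kato/Moser-type iteration to start from the weak $L^{2n/(n-1)}$ bound, and one obtains $C^{2,\alpha}$ only for small $\alpha$ (limited by the $|\psi_0|^{2/(n-1)}$ factor near the zero set), with smoothness only on the complement of the zero set --- exactly the statement of the proposition. These caveats do not change the overall validity of your strategy, but they matter for the claimed ease of the second half.
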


The strategy of proof is similar to the classical approach to the 
Yamabe problem as e.g.\ in \cite{lee.parker:87}.
A maximizing sequence for the functional will in general not converge, 
due to conformal invariance.
One then defines ``perturbed'' or ``regularized'' modifications of this
functional such that their maximizing sequences converge to a maximizer. 
In a final step one shows, assuming~\eref{ineq.strict}, 
that the maximizers of the perturbed functionals converge to a 
maximizer of the unperturbed functional.

Let us now continue with the sketch of proof of Theorem~\ref{theo.minim}.
From Prop.\ \ref{prop.sup.attain} we know that the supremum of $\cF$ is attained at some spinor $\psi_0$ 
which satisfies an Euler-Lagrange equation. By suitably rescaling $\psi_0$ and by possibly adding an element of $\ker D^{g_0}$
to $\psi_0$, the Euler-Lagrange equation reads
\begin{align*}
D^{g_0}\psi_0=\lamin(M,[g_0]) |\psi_0|^{2/(n-1)}\psi_0,&& \|\psi_0\|_{L^{2n/(n-1)}(g_0)}=1.
\end{align*}
However, it is unclear whether $D^{g_0}\psi_0$ (or equivalently $\psi_0$) has zeros or not, 
and therefore if the metric $g$ defined in \eqref{spinormetric} makes sense.

We will show in the following subsection that the zero set is nowhere dense, in other 
words its complement is dense. Then $g:=|D^{g_0}\psi_0|^{4/(n+1)}g_0$ defines a generalized metric, and by naturally extending 
the definition of $\la_1^+$ to generalized metrics, we see that the infimum in \eqref{def.inf} is then attained 
in this generalized metric.

Consistently with the above we set $M^*:= M\setminus \psi_0^{-1}(0)$.
From the standard formula for the behavior of the Dirac operator under conformal change 
(see e.g.\ \cite{hijazi:01}) the spinor $\psi:=\tfrac{\psi_0}{|\psi_0|}$ on 
$M^*$ satisfies
\begin{align*}
D^g\psi=\lamin(M,[g_0]) \psi, && |\psi|\equiv 1.
\end{align*}
This finishes the proof for Theorem~\ref{theo.minim}, up to the density of $M^*$ explained below.

%%%%%%%%%%%%%%%%%%%%%%%%%%%%%%%%%%%%%%%%%%%%%%%%%%%%%%%%%%%%%%%%%%%%%%%%%%%%%%5
\subsection{The zero set of the maximizing spinor}
%%%%%%%%%%%%%%%%%%%%%%%%%%%%%%%%%%%%%%%%%%%%%%%%%%%%%%%%%%%%%%%%%%%%%%%%%%

The goal of this subsection is to study the zero set  
of the maximizing spinor $\psi_0$ from the previous section.

\begin{lemma} \label{dense}
Let $(M,g_0)$ be a connected Riemannian spin manifold. 
Assume that a spinor $\phi$ of regularity $C^1$ satisfies
\begin{equation}\label{eq.non-lin.r}
D^{g_0}\phi=c|\phi|^r\phi
\end{equation}
where $r\geq 0$ and $c\in \mR$. If $\phi$ vanishes on a non-empty open set, then it vanishes on $M$.
\end{lemma}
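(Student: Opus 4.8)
The plan is to prove a unique continuation result for the nonlinear Dirac equation \eqref{eq.non-lin.r}, using the Aronszajn--Cordes type unique continuation theorem for solutions of elliptic systems with a Lipschitz (or even just bounded) perturbation. First I would set up the argument by contradiction in the standard way: let $U\subset M$ be the interior of the zero set $\phi^{-1}(0)$; by hypothesis $U\neq\emptyset$, and since $M$ is connected it suffices to show that $U$ is also closed, hence all of $M$. So I would fix a point $p\in\partial U$ and show that $\phi$ vanishes on a neighborhood of $p$, contradicting $p\notin U$.

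The key observation is that $\phi$ solves a \emph{linear} first-order elliptic equation with bounded coefficients: rewriting \eqref{eq.non-lin.r} as $D^{g_0}\phi = V\phi$ where $V:=c|\phi|^r\,\mathrm{Id}$ is a bounded (indeed $C^0$, and on $M^*$ smooth) endomorphism-valued potential, continuous because $\phi$ is $C^1$ and $r\ge 0$. One then applies $D^{g_0}$ once more, or passes to the Laplace-type operator $(D^{g_0})^2$, which by the Lichnerowicz formula is of the form $\nabla^*\nabla + \tfrac14\Scal^{g_0}$; the equation becomes a second-order elliptic system $(D^{g_0})^2\phi = D^{g_0}(V\phi)$ whose right-hand side is controlled pointwise by $|\nabla\phi| + |\phi|$ (the derivative of $V$ falls on $|\phi|^r$, producing a factor $|\phi|^{r-1}|\nabla\phi|$, which is bounded near any point since $|\phi|$ is bounded and, where $r<1$, one works on $M\setminus\phi^{-1}(0)$ and takes limits, or simply notes $r\ge 0$ with the $C^1$ bound suffices for the weak formulation). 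Thus $\phi$ satisfies a differential inequality
\[
|(D^{g_0})^2\phi| \le C\big(|\nabla\phi| + |\phi|\big)
\]
in a neighborhood of $p$, with $C$ depending on the $C^1$-norm of $\phi$ there. This is exactly the hypothesis of the strong unique continuation theorem of Aronszajn (or its version for Dirac-type operators, e.g.\ Booß-Bavnbek--Wojciechowski), which guarantees that a solution vanishing on a nonempty open set vanishes identically on the connected neighborhood. Since $\phi$ vanishes to infinite order at $p$ (being identically zero on the open set $U$ accumulating at $p$), it must vanish near $p$.

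The main obstacle is the low regularity of the potential: the coefficient $|\phi|^r$ need only be continuous, not Lipschitz, when $0\le r<1$, and its gradient $|\phi|^{r-1}|\nabla\phi|$ could in principle blow up near the zero set. The way around this is to note that we only need unique continuation across $\partial U$ starting from a set where $\phi\equiv 0$, so on that open set all derivatives vanish and the relevant estimates degenerate harmlessly; alternatively one invokes the version of Aronszajn's theorem requiring only an $L^\infty$ zeroth-order term and an $L^\infty$ (or $L^{n+\varepsilon}$) first-order term for the associated second-order inequality, which is satisfied here because near a boundary point of $U$ one can choose the neighborhood small enough that $|\phi|$ and $|\nabla\phi|$ are as small as desired. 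I would cite the unique continuation principle in the form used in earlier work of the first author (e.g.\ \cite{ammann.humbert.morel:06} or the standard references therein) and reduce the proof to a short paragraph verifying that \eqref{eq.non-lin.r} puts $\phi$ in the scope of that principle.
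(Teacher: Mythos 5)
Your overall strategy (reduce to a unique continuation theorem for Dirac-type operators treating $c|\phi|^r$ as a bounded zeroth-order perturbation) is the right one, and it is essentially what the paper does. However, the paper stays at first order: it cites Booss-Bavnbek--Marcolli--Wang \cite[Theorem~2.7]{booss.marcolli.wang:02}, which gives a weak unique continuation principle for perturbed Dirac operators $\Dirac_A + \fP_A$ under the sole hypothesis that the perturbation $\fP_A(\phi,\cdot)$ is locally bounded -- here $\fP_A(\phi,x) = -|\phi(x)|^r$, which is locally bounded because $\phi$ is. No derivative of $|\phi|^r$ is ever taken, so the low-regularity question you worry about never arises.

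Your proposal instead squares the Dirac operator and aims for a second-order differential inequality $|(D^{g_0})^2\phi| \le C(|\nabla\phi| + |\phi|)$ to feed into Aronszajn. That route can be made to work, but the justification you give for the key estimate is not correct as written. You identify the problematic term as $|\phi|^{r-1}|\nabla\phi|$ and then assert it is ``bounded near any point since $|\phi|$ is bounded'' -- but for $0\le r<1$ the factor $|\phi|^{r-1}$ is \emph{unbounded} near the zero set, so this assertion is false. Your fallbacks (shrinking the neighborhood so $|\phi|$, $|\nabla\phi|$ are small, or ``the weak formulation suffices'') do not repair this: making $|\phi|$ small makes $|\phi|^{r-1}$ \emph{larger}. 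What actually saves the estimate, and what you did not observe, is that in $D^{g_0}\bigl(c|\phi|^r\phi\bigr) = c\,(\nabla|\phi|^r)\cdot\phi + c|\phi|^r D^{g_0}\phi$ the dangerous gradient term is Clifford-multiplied against $\phi$ itself; since $|\nabla|\phi|^r|\le r|\phi|^{r-1}|\nabla\phi|$, the full term is bounded by $r|\phi|^{r}|\nabla\phi|$, which \emph{is} locally bounded (as $r\ge 0$ and $\phi\in C^1$). With that one-line observation your argument closes; without it there is a genuine gap. The first-order route of the paper is shorter precisely because it sidesteps this entirely.
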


Applying the lemma to $\phi:=\psi_0\not\equiv 0$ and $r:=2/(n-1)$ one obtains the density of~$M^*$ in $M$.
\begin{proof}
The lemma is a special case of the weak unique continuation principle~\cite{booss.marcolli.wang:02}.
More exactly we apply \cite[Theorem 2.7]{booss.marcolli.wang:02} with $\Dirac_A=D^{g_0}$ and $\fP_A(\phi,x):=-|\phi(x)|^r$.
As $\phi$ is locally bounded, we see that $x\mapsto \fP_A(\phi,x)$ is locally bounded as well.  Thus $\fP_A$ is an admissible perturbation
in the sense of  \cite{booss.marcolli.wang:02}, and \cite[Theorem 2.7]{booss.marcolli.wang:02} then yields the weak unique continuation principle 
for this equation which is exactly the statement of the lemma.
\end{proof}

We propose two conjectures around the above lemma.

The first conjecture relies on the following remark: 
if $r$ is an even integer, then $|\phi|^r\phi$ is a smooth function of $\phi$, so the Main Theorem in~\cite{baer:99} 
shows that the zero set of $\phi$ is a countably $(n-2)$-rectifiable set, and thus of Hausdorff dimension at most $n-2$.
In contrast, if $r$ is not an even integer, then B\"ar's method of proof does not apply, but the result seems likely to remain true.
\begin{conjecture}\label{conj1}
The zero set of any solution of~\eref{eq.non-lin.r} is of Hausdorff dimension at most $n-2$.
\end{conjecture}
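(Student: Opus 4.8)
\textbf{Proof proposal for Conjecture \ref{conj1}.} The plan is to upgrade the weak unique continuation principle used in Lemma \ref{dense} to a quantitative statement about vanishing orders, and then feed this into a stratification argument in the spirit of B\"ar \cite{baer:99}. First I would recall that, since $\phi$ is $C^1$ and $|\phi|^r$ is locally bounded, the perturbed Dirac equation \eqref{eq.non-lin.r} makes $\phi$ a solution of a (genuinely first-order, elliptic, \emph{real-linear} if we freeze the coefficient $c|\phi|^r$) system with continuous coefficients; by elliptic regularity and a bootstrap, $\phi$ is in fact $C^{1,\alpha}$ for all $\alpha<1$, and on the complement of its zero set it is as smooth as the nonlinearity allows. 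The heart of the matter is the behavior at a zero $x_0\in\phi^{-1}(0)$. One shows, using the strong unique continuation estimates of Aronszajn--Cordes type (as in \cite{booss.marcolli.wang:02} and the references therein, or B\"ar's own Carleman estimates), that $\phi$ has a finite vanishing order $N=N(x_0)\in\mathbb{N}$ at $x_0$, i.e.\ $\phi(x)=P_N(x-x_0)+o(|x-x_0|^N)$ in suitable coordinates and a suitable trivialization of $\Sigma M$, where $P_N\not\equiv 0$ is a spinor-valued homogeneous polynomial of degree $N$. The extra input beyond Lemma \ref{dense} is that $P_N$ is not arbitrary: differentiating \eqref{eq.non-lin.r} and matching lowest-order terms shows that the leading polynomial $P_N$ is \emph{harmonic-like}, namely it is annihilated by the Euclidean Dirac operator, $\sum_i e_i\cdot\partial_i P_N=0$ (the nonlinear term $c|\phi|^r\phi$ vanishes to order $N(1+r)>N$, hence contributes nothing to the order-$(N-1)$ part of the equation, exactly as in the linear case).

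Next I would reduce the dimension estimate to a fact about zero sets of such Dirac-harmonic homogeneous polynomials. The zero set $\phi^{-1}(0)$ is the union, over $N\ge 1$, of the strata $\Sigma_N:=\{x_0: N(x_0)=N\}$. Around a point of $\Sigma_N$, after the local normal form, the zero set is modeled on $\{P_N=0\}$ together with a higher-order correction; since $P_N$ is a nonzero Dirac-harmonic polynomial, its real codimension is at least $2$ (a nonzero Dirac-harmonic spinor-valued polynomial cannot vanish on a hypersurface — this is the pointwise algebraic fact that already powers the remark preceding the conjecture, and is the analogue of the statement that the real zero set of a nonzero harmonic polynomial has codimension $\ge 2$). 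Combined with a Federer-type dimension-reduction/covering argument — showing each stratum $\Sigma_N$ is itself $(n-2)$-rectifiable by induction on $N$, with the blow-up at each scale again a Dirac-harmonic polynomial zero set — one concludes $\dim_{\mathcal H}\phi^{-1}(0)\le n-2$. This is precisely the scheme of \cite{baer:99}, whose only use of smoothness of the equation is (i) to get finite vanishing order and (ii) to identify the leading term as Dirac-harmonic; I would observe that both survive verbatim for the nonlinearity $|\phi|^r\phi$ with \emph{any} $r\ge 0$, because the nonlinear term is subcritical at every zero.

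\emph{Main obstacle.} The delicate point is step (i): proving that $\phi$ has \emph{finite} vanishing order at every zero, not merely that it does not vanish on an open set. For smooth coefficients this is Aronszajn's theorem; here the effective coefficient $c|\phi(x)|^r$ is only continuous (indeed only $C^{1,\alpha}$ away from zeros and possibly merely H\"older at zeros when $r<1$), and worse, it degenerates \emph{at the very points where we need the estimate}. When $r$ is an even integer the term is smooth in $\phi$ and B\"ar's argument applies directly; for general $r\ge 0$ one must either invoke a Carleman estimate robust enough to tolerate a coefficient that is merely bounded and vanishing at $x_0$ (the strong unique continuation literature, e.g.\ Regbaoui or Koch--Tataru type results, does cover first-order Dirac-type systems with such coefficients), or exploit the specific structure $|\phi|^r\phi$ — namely that near a zero of order $N$ the nonlinear term is $O(|x-x_0|^{N(1+r)})$, which is a genuine perturbation of the homogeneous linear part — to run a fixed-point/Neumann-series argument producing the asymptotic expansion $\phi\sim P_N$ by hand. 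I would carry out the latter: it is self-contained, uses only the ellipticity of the Euclidean Dirac operator and Schauder estimates on balls, and converts the subcriticality of the nonlinearity into the finiteness of the vanishing order, after which steps (ii) and the stratification go through as above.
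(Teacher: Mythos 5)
First, a point of order: the paper does not prove this statement --- it is stated (and intended) as an open \emph{conjecture}. The authors' only remark is that when $r$ is an even integer the nonlinearity $|\phi|^r\phi$ is a smooth function of $\phi$, so the Main Theorem of \cite{baer:99} applies directly and gives countable $(n-2)$-rectifiability; for all other $r$ they explicitly say that B\"ar's method does not apply and leave the question open. So there is no ``paper proof'' to compare against, and your proposal should be judged as an attempt to settle an open problem.

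As such an attempt, it correctly isolates the two inputs of B\"ar's scheme (finite vanishing order at each zero, and identification of the leading Taylor polynomial as Dirac-harmonic, followed by Federer-type dimension reduction), and it correctly locates the obstruction in the first input. But the resolution you offer for that obstruction is circular. Your fixed-point/Neumann-series argument estimates the nonlinear term by $O(|x-x_0|^{N(1+r)})$ ``near a zero of order $N$'' --- this presupposes that the vanishing order $N$ is finite, which is exactly what needs to be proved. What is required is a \emph{strong} unique continuation statement: a $C^1$ solution of \eref{eq.non-lin.r} vanishing to infinite order at a point vanishes identically near that point. Lemma \ref{dense} only supplies the \emph{weak} principle (vanishing on an open set), and the perturbation framework of \cite{booss.marcolli.wang:02} used there is set up for weak UCP. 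To rule out infinite-order vanishing you need a Carleman estimate for $D$ (or for $D^2$, via an Aronszajn--Cordes differential inequality $|D^2\phi|\le C(|\phi|+|\nabla\phi|)$) that tolerates a potential which is merely bounded and degenerates precisely at the zero under study; you gesture at such results but do not verify their hypotheses, and the bootstrapped regularity of $\phi$ ($C^{1,\alpha}$, not $H^2$ a priori at zeros when $r<1$) is part of what must be checked. A second, smaller gap: even granting finite vanishing order and a Dirac-harmonic leading polynomial $P_N$, B\"ar's rectifiability argument uses the smoothness of the equation to control the remainder and its derivatives uniformly in the blow-up; with a nonlinearity that is only H\"older at the zero set, the claim that steps (ii) and the stratification ``go through verbatim'' is optimistic and would itself require a quantitative argument (e.g.\ $C^1$-convergence of rescalings to $P_N$ together with convergence of the zero sets). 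In short: the strategy is the natural one, but the key analytic lemma is assumed rather than proved, so the conjecture remains open as far as this proposal goes.
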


The second conjecture is motivated from the following, cf.\
\cite{hermann:phd,hermann:12}: for generic metrics on a compact $2$- or $3$-dimensional 
spin manifold all eigenspinors, i.e. all non-trivial 
solutions of~\eqref{eq.non-lin.r} 
with $r=0$, do not vanish anywhere; in other words they are everywhere non-zero.

We conjecture that the same fact is true for $r:=\tfrac{2}{(n-1)}$. 
This constant $r$ is special, as then \eqref{eq.non-lin.r} and thus the zero set 
of $\phi$ is conformally invariant.

\begin{conjecture}
Let $r:=\tfrac{2}{(n-1)}$, and let $M$ be connected.
For generic conformal classes on $M$, any solution 
of \eqref{eq.non-lin.r} with $\phi\not\equiv 0$ is everywhere non-zero.
\end{conjecture}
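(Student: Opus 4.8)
The natural approach is a transversality argument over the Banach manifold of conformal classes, modeled on the genericity theorems for eigenspinors in the linear case $r=0$ (cf.\ \cite{hermann:phd}). Two preliminary remarks frame the set-up. First, by scaling a solution of \eqref{eq.non-lin.r} one may normalize $c$, the interesting case being $c\neq0$ (for $c=0$ one has harmonic spinors, where index-theoretic constraints enter and which should be treated separately). Second, for $r=\tfrac2{n-1}$ both the equation and the zero set $\phi^{-1}(0)$ are conformally invariant, so a pure conformal rescaling does not move the zeros; the deformations that can destroy a zero are the genuine deformations of the conformal class. Fix therefore a large H\"older index and let $\mathcal{C}$ denote the Banach manifold of $C^{k,\al}$ conformal classes on $M$, and form the universal solution space
\[
\mathcal{U}:=\{(c,\phi)\ :\ c\in\mathcal{C},\ \ D^{g}\phi=c\,|\phi|^{r}\phi,\ \ \|\phi\|_{L^{2n/(n-1)}}=1\},
\]
$g$ being a representative of $c$. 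Since $r=\tfrac2{n-1}\leq2$, the map $\phi\mapsto|\phi|^{r}\phi$ is $C^{1}$ and its $\phi$-linearization is a zeroth-order perturbation of $D^{g}$, hence Fredholm; the implicit function theorem then shows, after a standard stratification by the kernel dimension of that linearization, that $\mathcal{U}$ is a Banach manifold fibering over $\mathcal{C}$ with finite-dimensional fibres, of dimension $0$ at the distinguished solution of Proposition \ref{prop.sup.attain}.

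The heart of the argument is a codimension count. The fibre $\Sigma_{p}M$ has real dimension $2^{\lfloor n/2\rfloor+1}$, which exceeds $n$ for every $n\geq2$, so a solution should not be able to vanish at any point of the $n$-parameter family of base points. Precisely, one sets $\widetilde{\mathcal{U}}:=\{(c,\phi,p)\in\mathcal{U}\times M:\phi(p)=0\}$ and tries to show that the evaluation map $\mathcal{U}\times M\to\Sigma M$, $(c,\phi,p)\mapsto\phi(p)$, is transverse to the zero section, so that $\widetilde{\mathcal{U}}$ is a Banach submanifold of codimension $2^{\lfloor n/2\rfloor+1}$ and the composite projection $\widetilde{\mathcal{U}}\to\mathcal{C}$ is, generically in $c$, a Fredholm map of index $n-2^{\lfloor n/2\rfloor+1}<0$. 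The Sard--Smale theorem then gives a comeager set of regular values, and since the index is negative a regular value has empty preimage; hence the set of conformal classes carrying a solution that vanishes somewhere is meager in $\mathcal{C}$, and any class in the complement has the asserted property (trivializing $\Sigma M$ locally near $p$ and patching is routine).

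The hard part, and the reason this is stated only as a conjecture, is precisely the transversality of the evaluation map. Linearizing \eqref{eq.non-lin.r} at a solution $\phi$ with $\phi(p)=0$, in directions $(h,\dot\phi)$ with $h$ tangent to $\mathcal{C}$, gives schematically
\[
D^{g}\dot\phi-c\big(|\phi|^{r}\dot\phi+r\,|\phi|^{r-2}\langle\dot\phi,\phi\rangle_{\rz}\,\phi\big)=-\dot D(h)\phi,
\]
and one must produce a solution with $\dot\phi(p)$ equal to an arbitrary prescribed vector of $\Sigma_{p}M$; since $\phi(p)=0$, both the right side and the lower-order terms on the left are controlled at $p$ by $h(p)$ and $\n\phi(p)$, so the plan is to invert a zeroth-order perturbation of $D^{g}$ and then invoke the weak unique continuation principle (Lemma \ref{dense}) to exclude accidental vanishing of $\dot\phi$ at $p$, exactly as in the linear case. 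But whether the admissible conformal directions $h$ (trace-free symmetric $2$-tensors) suffice to fill $\Sigma_{p}M$ after solving the PDE is already the substance of Hermann's theorem, established to our knowledge only for $n=2,3$. A second, genuinely new difficulty appears for $n\geq3$, where $r=\tfrac2{n-1}\leq1$: the potential $|\phi|^{r-2}$ in the linearized operator blows up like $\mathrm{dist}(\cdot,p)^{r-2}$ at the zeros of $\phi$ — a borderline Hardy-type term when $n=3$, and worse when $n\geq4$ — so the linearization degenerates exactly along the locus to be analyzed; one would then have to work in weighted spaces adapted to $\phi^{-1}(0)$, or first establish a structure theorem for it such as Conjecture \ref{conj1}. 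For $n=2$ one has $r=2$ and the potential is constant, so these obstructions disappear and the conjecture should follow by a direct adaptation of the linear surface argument; $n=3$, where $r=1$ is borderline, is the natural next case.
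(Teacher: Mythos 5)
This statement is not proved in the paper at all: it is stated and left as an open conjecture, motivated only by analogy with the linear case $r=0$ (Hermann's genericity result for eigenspinors on $2$- and $3$-dimensional spin manifolds, cited as \cite{hermann:phd}) and by the observation that $r=\tfrac{2}{n-1}$ is the exponent for which the equation and the zero set are conformally invariant. So there is no proof in the paper to compare yours against, and your text, read honestly, is not a proof either --- it is a program. You say so yourself (``the hard part, and the reason this is stated only as a conjecture, is precisely the transversality of the evaluation map''), and that self-assessment is accurate: the Sard--Smale/codimension-count skeleton is the standard one, but the two steps that carry all the content are left open. First, surjectivity of the linearization onto $\Sigma_pM$ using only conformal directions $h$ is exactly the substance of the $r=0$ theorem and is not known in the generality you need. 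Second, and more seriously, the linearized operator contains the potential $|\phi|^{r-2}$, which is singular along $\phi^{-1}(0)$ --- precisely the locus you are trying to analyze --- so even setting up the universal solution space $\mathcal{U}$ as a Banach manifold near a vanishing solution is not justified by the implicit function theorem as you invoke it; you would first need a structure theorem for the zero set (essentially Conjecture \ref{conj1}, also open in the paper) or a weighted-space framework that you do not supply.

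A smaller but real issue: your claim that $\mathcal{U}$ fibers over $\mathcal{C}$ with finite-dimensional fibres ``of dimension $0$ at the distinguished solution of Proposition \ref{prop.sup.attain}'' is unsupported; the maximizing spinor there is only known to be $C^{2,\alpha}$, and nothing in the paper controls the kernel of the linearized operator at it. None of this makes your outline wrong as a strategy --- it is the natural one, and for $n=2$ (where $r=2$ and the nonlinearity is smooth) it may well go through --- but as written it establishes nothing beyond what the paper already says in prose when motivating the conjecture.
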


If Conjecture~\ref{conj1} holds and if $M$ is connected, then 
the manifold $M\setminus \phi^{-1}(0)$ is connected. 
Fortunately, for the maximizing spinor $\psi_0$ the following fact can be proven independently of the above conjectures:

\begin{lemma}\label{connected}
Assume $M$ to be connected. Let $\psi_0$ be the maximizing spinor provided by 
Proposition~\ref{prop.sup.attain}.
Then $M^*=M\setminus \psi_0^{-1}(0)$ is connected.
\end{lemma}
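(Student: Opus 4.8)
The plan is to show that the complement of the zero set of the maximizing spinor $\psi_0$ is connected by combining the unique continuation principle already used for Lemma \ref{dense} with a connectedness-by-contradiction argument. Suppose $M^*=M\setminus\psi_0^{-1}(0)$ is disconnected; since $M$ is connected, the zero set $Z:=\psi_0^{-1}(0)$ must separate $M$ into at least two pieces. Write $M^*=U_1\sqcup U_2$ with $U_1,U_2$ non-empty, open, and relatively closed in $M^*$. The idea is to produce a new spinor, obtained by flipping the sign of $\psi_0$ on one of these components, which still satisfies the same nonlinear Dirac equation, and then to derive a contradiction either with the weak unique continuation principle or with the maximality of $\psi_0$ for the functional $\cF$.

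Concretely, first I would define $\tilde\psi_0$ to equal $\psi_0$ on $U_1\cup Z$ and $-\psi_0$ on $U_2$. Because $\psi_0$ vanishes on $Z$ together with the fact that $Z$ is the common topological boundary separating $U_1$ from $U_2$, the spinor $\tilde\psi_0$ is continuous; the delicate point is that it is in fact of class $C^1$ (indeed $C^{2,\alpha}$) across $Z$. Here I would use that $\psi_0$ solves $D^{g_0}\psi_0=\lamin(M,[g_0])|\psi_0|^{2/(n-1)}\psi_0$, so on a neighborhood of any point of $Z$ the spinor is a solution of a nice elliptic equation; combined with the local structure of the zero set and elliptic regularity, both $\psi_0$ and hence $\tilde\psi_0$ extend with the required regularity. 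Since $|\tilde\psi_0|=|\psi_0|$ pointwise, $\tilde\psi_0$ satisfies the very same nonlinear equation \eqref{eq.non-lin.r} on all of $M$ (the sign change is absorbed into the odd function $\phi\mapsto|\phi|^r\phi$), and $\|\tilde\psi_0\|_{L^{2n/(n-1)}(g_0)}=\|\psi_0\|_{L^{2n/(n-1)}(g_0)}=1$, so $\tilde\psi_0$ is again an admissible maximizer.

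Now the contradiction: consider $\psi_0+\tilde\psi_0$, which equals $2\psi_0$ on $U_1$ and $0$ on $U_2\cup Z$, hence vanishes on the non-empty open set $U_2$. If $\psi_0+\tilde\psi_0$ were itself a solution of a nonlinear Dirac equation of the type covered by Lemma \ref{dense}, that lemma would force it to vanish identically, contradicting $U_1\neq\emptyset$. The cleaner route, avoiding the issue that $\psi_0+\tilde\psi_0$ need not solve exactly \eqref{eq.non-lin.r}, is to observe that $D^{g_0}(\psi_0+\tilde\psi_0)=\lamin |\psi_0|^{2/(n-1)}(\psi_0+\tilde\psi_0)$, so $\chi:=\psi_0+\tilde\psi_0$ satisfies the \emph{linear} equation $D^{g_0}\chi=V\chi$ with potential $V:=\lamin|\psi_0|^{2/(n-1)}\in L^\infty_{loc}(M^*)$ bounded (and in fact continuous on all of $M$ since $\psi_0$ is continuous and vanishes on $Z$). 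Then the weak unique continuation principle of \cite{booss.marcolli.wang:02}, exactly as invoked in the proof of Lemma \ref{dense}, applies to $\chi$ on the connected manifold $M$: $\chi$ vanishes on the open set $U_2$, hence vanishes identically, forcing $\psi_0\equiv 0$ on $U_1$, a contradiction.

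The main obstacle I anticipate is the regularity claim: that $\tilde\psi_0$ (equivalently $\chi$) is genuinely a $C^1$ — and for the cited unique continuation theorem, a sufficiently regular — global solution, i.e. that no bad behavior of $\psi_0$ along the zero set $Z$ obstructs gluing the flipped pieces. This requires knowing enough about the zero set and the local form of $\psi_0$ near $Z$; one can either invoke B\"ar's result \cite{baer:99} on the structure of zero sets (applicable when the nonlinearity is smooth, i.e. when $2/(n-1)$ is an even integer, hence for $n=3$ where $r=1$ — wait, $r=1$ is odd) or, more robustly, argue directly that the continuous extension $\chi$ is a weak (distributional) solution of $D^{g_0}\chi=V\chi$ across $Z$ — which holds because $\chi$ is continuous, vanishes on $Z$, and the jump of its normal derivative is supported on a set of measure zero — and then upgrade to the regularity needed to apply \cite[Theorem 2.7]{booss.marcolli.wang:02} by elliptic bootstrapping. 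Since $n=3$ is the relevant case for this section, and there $r=1$ so the nonlinearity is $C^\infty$, I would in the end restrict attention to that case if the general regularity argument proves cumbersome.
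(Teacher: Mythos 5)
Your overall setup is close to the paper's: once you form $\chi=\psi_0+\tilde\psi_0$ you see it equals $2\psi_0$ on $U_1$ and $0$ on $U_2\cup Z$, which is (up to the factor $2$) exactly the truncated spinor $\psi_1$ the paper works with; the detour through the sign-flipped $\tilde\psi_0$ is unnecessary. Your final contradiction differs: you invoke the weak unique continuation principle for the linear equation $D^{g_0}\chi=V\chi$ with $V=\lamin|\psi_0|^{2/(n-1)}$, whereas the paper computes $\cF_{2n/(n+1)}(\psi_1)>\mu_{2n/(n+1)}^{g_0}$ directly from $0<\|\psi_1\|_{L^{2n/(n-1)}}<1$, contradicting the definition of $\mu$. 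Both finishers are legitimate once the regularity of the truncation is established.

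The gap is precisely at that regularity step, and your proposed argument for it does not work. Applying elliptic regularity to $\psi_0$ tells you nothing about the regularity of the sign-flipped/truncated spinor across $Z$: if $\psi_0$ had a simple zero along a separating hypersurface $H$ (nonzero normal derivative, $\psi_0$ changing sign), the flip would destroy $C^1$-regularity, exactly as $t\mapsto -|t|$ fails to be $C^1$ at $0$. The content you are missing, and which is the heart of the paper's proof, is that such a picture is impossible: at any $x\in Z$ one has $(D^{g_0}\psi_0)(x)=0$ because the nonlinearity vanishes there, so the linear map $T_xM\to\Sigma_xM$, $X\mapsto\nabla_X\psi_0$, if nonzero, has rank at least $2$. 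By the implicit function theorem the zero set of $\psi_0$ near such an $x$ is contained in a codimension-$2$ submanifold $S$, and removing a codimension-$2$ set keeps a small ball connected, so the ball minus $S$ lies in a single component of $M^*$ — forcing $x\notin\partial\Omega_2$. Hence $\nabla\psi_0=0$ at every point of $\partial\Omega_1\cap\partial\Omega_2$, and only then is $\psi_1$ (equivalently your $\chi$) of class $C^1$. Your fallback — that $\chi$ is Lipschitz, hence a weak ($W^{1,\infty}$) solution, to which one then applies unique continuation and ``elliptic bootstrapping'' — is not filled in either: you would need the version of \cite[Theorem~2.7]{booss.marcolli.wang:02} for Sobolev-regular (not $C^1$) solutions, and the bootstrap cannot recover $C^1$ from $H^2$ in dimension $3$, especially since the potential $V=\lamin|\psi_0|^{2/(n-1)}$ (for $n=3$, $V=\lamin|\psi_0|$) is itself only Lipschitz. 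In short: without the rank-$\geq 2$/implicit-function-theorem argument, the regularity of $\chi$ is not established and the proof is incomplete.
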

\begin{proof}
Assume that there exists a partition $M^*=\Omega_1\sqcup \Omega_2$
into non-empty disjoint open sets. We define the continuous spinor
$\psi_1$ by $\psi_1|_{\Omega_1}:=\psi_0|_{\Omega_1}$ and 
$\psi_1|_{M\setminus \Omega_1}:\equiv 0$. Then 
$\|\psi_1\|_{L^{2n/(n-1)}}<\|\psi_0\|_{L^{2n/(n-1)}}$. As a first step we prove 
by contradiction that 
$\psi_1$ is $C^1$, or equivalently that $\na \psi_0=0$ on $\partial \Omega_1$.

Suppose that there existed $x\in \partial \Omega_1\cap \partial \Omega_2$ 
such that $\nabla\psi_0$
is non-zero in $x$. Because of $(D\phi)(x)=0$ the map $T_xM\to \Sigma_x M$, 
$X\mapsto \nabla_X \psi_0$ has rank at least $2$. The implicit function 
theorem then implies that there is a connected open neighborhood $U$ of $x$ 
and a submanifold $S\subset U$ of codimension $2$ such that 
$\psi_0^{-1}(0)\cap U\subset S$. This implies that $U\setminus S\subset \Omega_1$. One easily concludes that $S\cap \Omega_2=\emptyset$, thus we obtain the 
contradiction $x\not\in \partial\Omega_2$.

We have proven that $\psi_1$ is $C^1$, and thus $\psi_1$ is a solution to
\begin{align*}
 D^{g_0}\psi_1=\lamin(M,[g_0]) |\psi_1|^{2/(n-1)}\psi_1,&& 0< \|\psi_1\|_{L^{2n/(n-1)}(g_0)}<1.
\end{align*}

A straightforward calculation then yields 
  $$\cF_{2n/(n+1)}(\psi_1)>  \tfrac{1}{\lamin(M,[g_0])}=\mu_{2n/(n+1)}^{g_0}$$
which contradicts the definition of $\mu_{2n/(n+1)}^{g_0}$.
\end{proof}

%%%%%%%%%%%%%%%%%%%%%%%%%%%%%%%%%%%%%%%%%%%%%%%%%%%%%%
\subsection{From eigenspinors of constant length to generalized Killing spinors}
%%%%%%%%%%%%%%%%%%%%%%%%%%%%%%%%%%%%%%%%%%%%%%%%%%%%%
\label{subsec.get.gen.Kil}

In this section we specialize to the case  $n=3$. We will
see that in this dimension any eigenspinor of constant length is 
a generalized Killing spinor.

\begin{proposition}
Let $\psi$ be a solution of $D\psi=H\psi$, $H\in C^\infty(M)$, 
of constant length $1$, on a manifold of dimension $n=3$.
Then $\psi$ is a generalized Killing spinor. 
\end{proposition}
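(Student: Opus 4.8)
The goal is to show that a spinor $\psi$ with $|\psi|\equiv 1$ and $D\psi = H\psi$ on a $3$-manifold satisfies $\nabla_X\psi = \tfrac12 W(X)\cdot\psi$ for some symmetric endomorphism field $W$. The starting point is purely algebraic: in dimension $n=3$ the real spinor bundle $\Sigma M$ has rank $2$ over $\mathbb{C}$, and for a fixed unit spinor $\psi_x$ the four vectors $\{\psi_x\}\cup\{e_i\cdot\psi_x : i=1,2,3\}$ form a (real) orthogonal basis of $\Sigma_x M \cong \mathbb{C}^2 \cong \mathbb{R}^4$. Hence I can expand the covariant derivative pointwise as
\[
\nabla_X\psi = \alpha(X)\,\psi + \sum_{i=1}^3 \beta_i(X)\,e_i\cdot\psi,
\]
where $\alpha$ is a $1$-form and the $\beta_i$ assemble into an endomorphism field $B$ of $TM$ via $\nabla_X\psi = \alpha(X)\psi + B(X)\cdot\psi$. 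The first step is to kill the scalar part: differentiating $|\psi|^2\equiv 1$ gives $0 = 2\,\mathrm{Re}\langle\nabla_X\psi,\psi\rangle = 2\alpha(X)|\psi|^2$ (using that Clifford multiplication by a vector is skew-adjoint, so $\langle e_i\cdot\psi,\psi\rangle$ is purely imaginary, and in fact $\alpha(X)$ is forced to be real because $\langle\nabla_X\psi,\psi\rangle$ has vanishing real part while the imaginary contributions come from the $\beta_i$ terms); a slightly more careful bookkeeping of real versus imaginary parts shows $\alpha\equiv 0$. So $\nabla_X\psi = B(X)\cdot\psi$ with $B\in\mathrm{End}(TM)$, and it remains to prove $B$ is symmetric (then set $W = 2B$).

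The second step uses the Dirac equation to identify the trace-type part of $B$. Applying Clifford contraction, $D\psi = -\sum_i e_i\cdot\nabla_{e_i}\psi = -\sum_i e_i\cdot B(e_i)\cdot\psi$. Decompose $B = B^{\mathrm{sym}} + B^{\mathrm{skew}}$. For the symmetric part, formula \eqref{trace} gives $\sum_i e_i\cdot B^{\mathrm{sym}}(e_i) = -\tr(B^{\mathrm{sym}})$, contributing $\tr(B^{\mathrm{sym}})\psi$. For the skew part, in dimension $3$ a skew endomorphism is $B^{\mathrm{skew}}(X) = v\times X$ for some vector $v$, and $\sum_i e_i\cdot(v\times e_i)$ is a multiple of the volume element $e_1\cdot e_2\cdot e_3$ acting on $\psi$; in dimension $3$ the complex volume element acts on $\Sigma M$ as a scalar ($\pm i$ on the irreducible bundle, or $\pm 1$ depending on conventions). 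Comparing with $D\psi = H\psi$ then forces the coefficient of this volume-element term to match a real multiple of the identity, while $H$ is real-valued. The point I expect to be the main obstacle is precisely this bookkeeping: showing that the Dirac equation with \emph{real} eigenvalue $H$ together with the constant-length condition is exactly enough to annihilate the skew-symmetric part $B^{\mathrm{skew}}$ of $B$ — i.e. that the would-be ``imaginary'' contribution from $B^{\mathrm{skew}}$ cannot be balanced by anything. This is where dimension $3$ is used essentially (the volume element is central and acts as a scalar), and where the reality of $H$ is used.

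Concretely, the final step runs as follows: writing $B^{\mathrm{skew}}$ in terms of the vector $v$, one computes $\sum_i e_i\cdot B^{\mathrm{skew}}(e_i)\cdot\psi = c\, \mathrm{vol}\cdot v\cdot\psi$ for an explicit constant $c\neq 0$, and since $\mathrm{vol}$ acts as $\pm i$ (up to convention) this is $\pm ic\, v\cdot\psi$, a spinor pointwise orthogonal to $\psi$ unless $v=0$ (because $v\cdot\psi \perp \psi$). But $D\psi - \tr(B^{\mathrm{sym}})\psi$ must be a real multiple of $\psi$ by the two facts just established ($D\psi = H\psi$ with $H$ real, and the symmetric part already accounts for the $\psi$-component), hence its component transverse to $\psi$ vanishes, forcing $v\cdot\psi = 0$ and therefore $v = 0$. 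Thus $B = B^{\mathrm{sym}}$ is symmetric, $\alpha = 0$, and $W := 2B$ makes $\psi$ a generalized Killing spinor satisfying \eqref{gks}; as a byproduct $H = \tfrac12\tr(W)$. I would present the algebraic rank-$2$ observation and the action of the volume element as the two ``classical'' inputs, citing \cite{bfgk} or \cite{friedrich:98} for the dimension-$3$ spinor algebra, and keep the computation of the constant $c$ brief.
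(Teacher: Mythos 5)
Your proposal is correct and follows essentially the same route as the paper: both rest on the observation that in dimension $3$ the unit spinor $\psi$ together with $e_1\cdot\psi,e_2\cdot\psi,e_3\cdot\psi$ forms a real orthonormal basis of the rank-$4$ real spinor space, so that constancy of $|\psi|$ yields $\nabla_X\psi=B(X)\cdot\psi$, and both then extract the symmetry of $B$ from the Dirac equation with real eigenvalue $H$. The paper checks symmetry entry by entry ($\langle A(e_i),e_j\rangle=\langle A(e_j),e_i\rangle$) where you contract once and annihilate the skew part; for that last step note that in dimension $3$ the volume element $e_1\cdot e_2\cdot e_3$ squares to $+1$ and acts as $\pm 1$ (not $\pm i$), which is precisely what keeps the skew contribution $\pm 2\,v\cdot\psi$ inside $\psi^{\perp}$ and forces $v=0$.
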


This proposition is the natural generalization of a result in
\cite{friedrich:98} from $n=2$ to $n=3$. 
We will include a simple proof here.
 
\begin{proof}
Let $g$ be the metric on~$M$ and $\langle\cdot,\cdot\rangle$ the real part of the Hermitian 
metric on~$\Sigma M$. We define $A\in \End(TM)$ by 
  $$g(A(X),Y):=\<\na_X\psi,Y\cdot \psi\>$$
for all $X,Y\in TM$.
Note that for any point $p\in M$ and any vector $X\in T_pM$ we have
$$\<\na_X\psi, \psi\>=\tfrac12\pa_{X}\<\psi,\psi\>=0,$$
in other words 
$\na_X \psi\in \psi^\perp=\{\phi\in \Si_pM\,|\, \<\phi,\psi\>=0\}$.
Let $e_1,e_2,e_3$ be an orthonormal basis of $T_pM$. By possibly changing the order of this basis, we can achieve  $e_1\cdot e_2 \cdot e_3=1$
in the sense of endomorphisms of $\Si M$. 
The spinors $e_1\cdot\psi$, $e_2\cdot\psi$ and $e_3\cdot \psi$ 
form an orthonormal system of $\psi^\perp$, and because of 
$\dim_\mR \psi^\perp=3$, it is a basis. It follows 
$\na_X\psi=A(X)\cdot \psi$.

Furthermore
\begin{eqnarray*} 
  \<A(e_2),e_1\> &=&  \< \na_{e_2} \psi , e_1\cdot \psi\>=
 \< e_2 \cdot \na_{e_2} \psi , \underbrace{e_2\cdot e_1\cdot\psi}_{e_3\cdot \psi}\>\\ 
&=& H \underbrace{\< \psi,e_3\cdot\psi\>}_{=0}  
    - \< e_1 \cdot \na_{e_1}\psi,e_3 \cdot\psi\>   
    - \< e_3\cdot \na_{e_3}\psi, e_3\cdot \psi\>\\ 
&=&  \< e_3 \cdot e_1 \cdot \na_{e_1}\psi,\psi\>   
    - \<\na_{e_3}\psi, \psi\>= - \< e_2 \cdot \na_{e_1}\psi,\psi\>\\
 &=& \<A(e_1),e_2\> 
\end{eqnarray*} 
and similarly  
$\<A(e_1),e_3\>= \<A(e_3),e_1\>$ and $\<A(e_2),e_3\>=\<A(e_3),e_2\>$. 
Thus $A$ is symmetric.
\end{proof}

Summarizing our knowledge until now, we have:
\begin{cor}
Assume that $(M,g_0)$ is a compact connected spin manifold of dimension $n=3$ 
satisfying $\lamin(M,[g_0])<\lamin(\mS^3)=\tfrac32\, (2\pi^2)^{1/3}$. 
Then there exist
\begin{enumerate}
\item an open, connected and dense subset $M^*$;
\item a metric $g$ on $M^*$ conformal to $g_0|_{M^*}$and of volume~$1$;
\item an eigenspinor $\psi$ to $D^g$ to the real eigenvalue $\la_+^1(g)$,
\end{enumerate}
such that $\psi$ has constant length and thus is a generalized Killing spinor
on $(M^*,g)$. We obtain a self-adjoint section $A$ of $\End(TM)$ such that 
$\na_X\psi=A(X)\cdot \psi$ and $\tr A=- \la_+^1(g)$.
\end{cor}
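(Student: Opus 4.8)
The plan is to assemble the corollary by collecting the ingredients already proven in this section and verifying that they fit together on the common domain $M^*$. First I would invoke Proposition~\ref{prop.sup.attain}: under the strict inequality $\lamin(M,[g_0])<\lamin(\mS^3)$, the supremum of $\cF_{2n/(n+1)}$ is attained at a spinor $\psi_0$ of class $C^{2,\alpha}$, which after the rescaling described in the discussion following Theorem~\ref{theo.minim} satisfies the Euler--Lagrange equation $D^{g_0}\psi_0=\lamin(M,[g_0])|\psi_0|^{2/(n-1)}\psi_0$ with $\|\psi_0\|_{L^{2n/(n-1)}}=1$. Setting $M^*:=M\setminus\psi_0^{-1}(0)$, Lemma~\ref{dense} (applied with $r=2/(n-1)$, using that $\psi_0\not\equiv0$) shows $M^*$ is dense and open, and Lemma~\ref{connected} gives that $M^*$ is connected; this yields item (1).

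Next I would define $g:=|D^{g_0}\psi_0|^{4/(n+1)}g_0=|\psi_0|^{4/(n-1)}g_0$ on $M^*$, which is conformal to $g_0|_{M^*}$; the stated normalization $\vol(M^*,g)=1$ follows from $\|\psi_0\|_{L^{2n/(n-1)}(g_0)}=1$ together with the identity $\dvol^g=|\psi_0|^{2n/(n-1)}\dvol^{g_0}$ coming from the conformal factor. This is item (2). For item (3), I would apply the standard conformal transformation rule for the Dirac operator (as recalled following Theorem~\ref{theo.minim}, cf.\ \cite{hijazi:01}): the spinor $\psi:=\psi_0/|\psi_0|$ on $M^*$ satisfies $D^g\psi=\lamin(M,[g_0])\,\psi$ with $|\psi|\equiv1$. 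Since $n=3$ and $g$ is smooth on $M^*$, the eigenvalue $\lamin(M,[g_0])$ equals $\la_+^1(g)$ (the infimum in \eqref{def.inf} being attained in this generalized metric, as explained in the proof of Theorem~\ref{theo.minim}), so $\psi$ is a constant-length eigenspinor to $\la_+^1(g)$.

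Finally, to obtain the generalized Killing property I would apply the Proposition of Subsection~\ref{subsec.get.gen.Kil} with $H\equiv\la_+^1(g)$: in dimension $n=3$ any eigenspinor of constant length $1$ is a generalized Killing spinor, and moreover the proof of that proposition produces the self-adjoint endomorphism field $A\in\End(TM^*)$ characterized by $g(A(X),Y)=\langle\na_X\psi,Y\cdot\psi\rangle$, satisfying $\na_X\psi=A(X)\cdot\psi$. The trace relation $\tr A=-\la_+^1(g)$ follows by Clifford-contracting: $D^g\psi=\sum_i e_i\cdot\na_{e_i}\psi=\sum_i e_i\cdot A(e_i)\cdot\psi=-\tr(A)\,\psi$ by \eqref{trace}, and comparing with $D^g\psi=\la_+^1(g)\psi$ gives $\tr A=-\la_+^1(g)$. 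The only real subtlety — and the step I would be most careful about — is bookkeeping the regularity and the domains: $\psi_0$ is only $C^{2,\alpha}$, but it is smooth away from its zero set (stated in the Introduction, since $\psi_0$ solves an elliptic equation with smooth coefficients off its zeros), so $g$ and $\psi$ are genuinely smooth on $M^*$ and the cited propositions, which assume smoothness, do apply there. Everything else is a direct quotation of the results established earlier in this section.
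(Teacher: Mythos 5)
Your proposal is correct and follows exactly the route the paper intends: the corollary is stated there as a summary of Proposition~\ref{prop.sup.attain}, Lemmata~\ref{dense} and~\ref{connected}, the conformal transformation of the Dirac operator, and the Proposition of Subsection~\ref{subsec.get.gen.Kil}, and you assemble precisely these ingredients, including the trace identity via \eqref{trace}. The only cosmetic slip is writing $|D^{g_0}\psi_0|^{4/(n+1)}g_0=|\psi_0|^{4/(n-1)}g_0$, which holds only up to the constant factor $\lamin(M,[g_0])^{4/(n+1)}$ coming from the Euler--Lagrange equation; the volume-one normalization you verify corresponds to the second expression, which is the one you actually use, so nothing is affected.
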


%%%%%%%%%%%%%%%%%%%%%%%%%%%%%%%%%%%%%%%%%%%%%%%%%%%%%%
\subsection{Analytic manifolds}\label{subsec.real-ana}
%%%%%%%%%%%%%%%%%%%%%%%%%%%%%%%%%%%%%%%%%%%%%%%%%%%%%%

\begin{definition}\label{def.conf.ana}
Let $g_1$ be a Riemannian metric on a smooth manifold $M$. We say that~$[g_1]$ is
an \emph{analytic conformal class} if $M$ has a compatible
structure of a (real-)analytic manifold for which 
one of the following equivalent statements holds:
\begin{enumerate}[{\rm (a)}]
\item there is a (real-)analytic metric $h\in [g_1]$;
\item for any point $x\in M$ there is an open set $U\ni x$, such that there is an analytic
metric $g^U$ on $U$ with $g^U\in [g_1|_U]$.
\end{enumerate}
\end{definition}

\begin{lemma}
Conditions (a) and (b) in Definition~\ref{def.conf.ana} are equivalent.
\end{lemma}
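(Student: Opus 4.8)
The implication (a) $\Rightarrow$ (b) is immediate: if $h\in[g_1]$ is a global analytic metric, then $g^U:=h|_U$ does the job for every open $U$. So the plan is to prove (b) $\Rightarrow$ (a), the real-analytic structure on $M$ being fixed throughout.

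The plan is to recast (b) as a sheaf gluing problem. First I would introduce the sheaf $\mathcal{S}$ on $M$ assigning to an open set $U$ the set of real-analytic Riemannian metrics lying in the conformal class $[g_1|_U]$. Hypothesis (b) says precisely that $\mathcal{S}$ has a section near every point. Next I would note that if $g',g''$ are two sections of $\mathcal{S}$ over a connected open set, then $g''=u\,g'$ for a positive function $u$ which is real-analytic, since in local coordinates $u^n=\det(g'')/\det(g')$ is a quotient of real-analytic functions. Thus $\mathcal{S}$ is a principal homogeneous sheaf (torsor) under the multiplicative sheaf $C^\omega_{>0}$ of positive real-analytic functions on $M$, acting by scaling.

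Isomorphism classes of such torsors are classified by $H^1(M,C^\omega_{>0})$, with a global section existing exactly when this class vanishes. Taking logarithms identifies the sheaf of abelian groups $C^\omega_{>0}$ with the additive sheaf $C^\omega$ of real-analytic functions, so it suffices to know that $H^1(M,C^\omega)=0$. This is the one non-formal ingredient: a real-analytic manifold admits a Stein complexification (Grauert, Morrey), so Cartan's Theorem B yields $H^q(M,C^\omega)=0$ for all $q\ge 1$; equivalently, one invokes Cartan's vanishing theorem for coherent analytic sheaves on paracompact real-analytic manifolds. Hence $\mathcal{S}$ is the trivial torsor and admits a global section $h$, a real-analytic metric with $h\in[g_1]$, which proves (a).

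For a reader who prefers a concrete argument, the same proof runs through \v{C}ech cohomology: take a locally finite cover $\{U_i\}$ with analytic metrics $g_i\in[g_1|_{U_i}]$ as in (b), write $g_i=\lambda_{ij}\,g_j$ on overlaps with $\lambda_{ij}$ positive real-analytic and $\lambda_{ij}\lambda_{jk}=\lambda_{ik}$, so that $(\log\lambda_{ij})$ is a \v{C}ech $1$-cocycle valued in $C^\omega$; since $H^1(M,C^\omega)=0$ there are real-analytic functions $a_i$ on $U_i$ with $\log\lambda_{ij}=a_j-a_i$, and then the metrics $e^{a_i}g_i$ agree on overlaps and glue to a global real-analytic metric in $[g_1]$. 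The essential point, and the only real obstacle, is the vanishing $H^1(M,C^\omega)=0$; the gluing itself is forced, because the transition factors relating two local analytic representatives of one and the same smooth conformal class are automatically analytic.
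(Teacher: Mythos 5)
Your argument is correct, and it takes a genuinely different route from the paper. After dispensing with $n=2$ by uniformization, the paper works with the $\mathbb{R}^+$-principal bundle $L\to M$ whose fiber at $p$ is $\{\lambda g_p:\lambda>0\}$: the local analytic representatives make $L$ an \emph{analytic} $\mathbb{R}^+$-bundle, the smooth section $g:M\to L$ is approximated in the strong $C^1$-topology by an analytic map $g^\omega:M\to L$ (Grauert--Remmert, as quoted from Hirsch), and then $g^\omega\circ(\pi\circ g^\omega)^{-1}$ is an analytic section because $\pi\circ g^\omega$ is an analytic diffeomorphism close to the identity. You instead recognize the local analytic representatives as a torsor under $C^\omega_{>0}\cong C^\omega$ and conclude from the vanishing $H^1(M,C^\omega)=0$, which rests on Grauert's Stein complexification together with Cartan's Theorem B (equivalently, Cartan's vanishing theorem for coherent real-analytic sheaves). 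Both proofs therefore trace back to the same 1958 work of Grauert, and both hinge on the same elementary observation that the ratio of two analytic metrics in one smooth conformal class is automatically analytic. What your version buys is a cleaner structure and uniformity in dimension (no separate $n=2$ case); what the paper's version buys is that it only invokes a result already packaged in textbook form (Hirsch, Ch.~2, Thm.~5.1) rather than the coherence machinery behind $H^1(M,C^\omega)=0$. Either proof would be acceptable.
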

\begin{proof}
The implication from (a) to (b) is trivial. The implication from (b) to (a) is a 
direct consequence of uniformization in dimension $n=2$, thus we restrict to the case $n\geq 3$. 

Let $g$ be a smooth metric in the given analytic conformal class. We have to show that the locally defined 
metrics $g^U$ provided by (b) can be deformed conformally such that they match together to a 
globally defined metric.
Let $L_p:=\{\la g_p\,|\, \la>0\}\subset T_p^*M\otimes T_p^*M$, and let 
$L:=\bigcup L_p$. The bundle $\pi:L\to M$ is a smooth $\mR^+$-principal bundle
over $M$. All local Riemannian metrics $g^U$ are 
local sections of $\pi:L\to M$, $g^U:U\to L$. 
If two local analytic metrics
$g^U$ and $g^{\ti U}$ are given, then there is an analytic function
$f:\ti U\cap U\to \mR^+$ such that $g^U=f g^{\ti U}$ on $\ti U\cap U$.
Consequently, $\pi:L\to M$ carries a structure of
analytic $\mR^+$-principal bundle over $M$, and thus the total space $L$
of the bundle is an analytic manifold.
The smooth map $g:M\to L$ can be approximated in the strong $C^1$-topology by 
an analytic map $g^\omega:M\to L$, see \cite[Chap.~2, Theorem~5.1]{hirsch:76} which is 
proven by Grauert and Remmert in \cite{grauert:58}. 
The map $\pi\circ g^\omega:M\to M$ 
is a smooth analytic map, which is close to the identity in the $C^1$-topology, 
and thus (for suitably chosen $g^\omega$) it is an analytic diffeomorphism. 

As a consequence, the map $g^\omega\circ (\pi\circ g^\omega)^{-1}: M\to L$ is an analytic 
section of $L$ and thus an analytic representative of the given conformal class.
\end{proof}

\begin{lemma}\label{lemma.conf.ext}
If an analytic conformal class is conformally flat on a non-empty open set~$U$, and if~$M$ is connected, then the conformal class is 
already conformally flat on $M$.
\end{lemma}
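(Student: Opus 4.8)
The plan is to exploit analyticity together with a unique continuation argument for the Weyl (or Cotton) tensor. Recall that a conformal class is conformally flat precisely when a certain conformal curvature tensor vanishes: the Weyl tensor $C$ in dimensions $n\geq 4$, and the Cotton tensor (equivalently, the fact that the Schouten tensor is a Codazzi tensor) in dimension $n=3$. Call this tensor $T_{[g]}$; it depends only on the conformal class, and if $h$ is any analytic representative of $[g_1]$ then $T_{[g]}=T_h$ is an analytic section of the corresponding tensor bundle over the connected analytic manifold $M$.

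First I would fix an analytic metric $h\in[g_1]$, which exists by the previous lemma (condition (a) of Definition~\ref{def.conf.ana}), and form the analytic tensor field $T:=T_h$ measuring obstruction to conformal flatness. By hypothesis the conformal class is conformally flat on the non-empty open set $U$, so $T$ vanishes identically on $U$. Since $T$ is a real-analytic section of a tensor bundle over the connected real-analytic manifold $M$, and it vanishes on a non-empty open subset, the identity theorem for real-analytic functions (applied in analytic coordinate charts, together with connectedness to propagate along chains of overlapping charts) forces $T\equiv 0$ on all of $M$. Hence $[g_1]$ is conformally flat everywhere on $M$.

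The main point to be careful about is simply the correct statement of the obstruction: one must treat $n=3$ separately, using the Cotton tensor rather than the (identically vanishing) Weyl tensor, and for $n=2$ the statement is vacuous since every conformal class is conformally flat. One should also note that conformal flatness is a purely local, open condition — it does not by itself assert the existence of a \emph{global} flat metric in the class, only that each point has a neighborhood with a flat representative — so ``conformally flat on $M$'' is exactly the vanishing of $T$ on $M$, and no global integration or monodromy argument is needed. I expect the only real obstacle is bookkeeping: verifying that $T_h$ is genuinely analytic when $h$ is analytic (immediate, as it is a universal polynomial expression in the metric, its inverse, and finitely many derivatives), and invoking the real-analytic identity theorem in the form valid for connected manifolds. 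No serious difficulty arises.
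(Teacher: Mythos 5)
Your proof is correct and follows essentially the same route as the paper: fix an analytic representative $h\in[g_1]$, observe that the relevant conformal curvature obstruction is an analytic tensor field on the connected analytic manifold $M$, and invoke the real-analytic identity theorem to propagate its vanishing from $U$ to all of $M$. In fact you are slightly more precise than the paper's own proof in dimension $3$, where the correct obstruction is the Cotton tensor (equivalently, the failure of the Schouten tensor to be a Codazzi tensor), not the vanishing of the Schouten tensor itself as the paper loosely states.
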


\begin{proof}
Being conformally flat on an open set $U$ is equivalent to the vanishing of the Weyl curvature (resp.~Schouten tensor) in dimension
$m\geq 4$ (resp. $m=3$). The Weyl curvature and the Schouten tensor of an analytic metric are analytic as well. 
Thus if they vanish on~$U$ they must vanish on all of~$M$.
\end{proof}

\begin{lemma}
Let $\phi$ be a smooth solution of $D\phi= c|\phi|^\alpha \phi$, $\phi\neq 0$ 
on a (not necessarily complete)
analytic Riemannian spin manifold $(U,g,\chi)$. 
Then $\phi$ is analytic as well.
\end{lemma}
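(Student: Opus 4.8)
The plan is to apply the principle of analytic elliptic regularity: a solution of an elliptic PDE with analytic coefficients and analytic nonlinearity is itself analytic. Concretely, the Dirac operator $D$ of an analytic Riemannian spin manifold $(U,g,\chi)$ is a first-order elliptic differential operator whose coefficients (in analytic local trivializations of $\Sigma U$ over analytic coordinate charts) are analytic functions. The equation $D\phi = c|\phi|^\alpha\phi$ is, however, not quite in the classical form covered by the Morrey--Nirenberg theorem on interior analyticity, because the right-hand side $F(x,\phi) = c|\phi(x)|^\alpha\phi(x)$ is in general only continuous, not analytic, in $\phi$ unless $\alpha$ is an even integer.

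So the first step I would take is to upgrade the regularity of $\phi$ from smooth to analytic by a bootstrap that isolates the bad factor $|\phi|^\alpha$. Where $\phi(x)\neq 0$, the function $x\mapsto |\phi(x)|^2 = \langle\phi,\phi\rangle$ is smooth and bounded away from zero, hence $|\phi|^\alpha = (|\phi|^2)^{\alpha/2}$ is a smooth function of the smooth real-valued function $|\phi|^2$; the composition is smooth but still not obviously analytic. The cleaner route is to differentiate the squared-norm relation: set $u:=|\phi|^2$, so that the equation becomes $D\phi = c\,u^{\alpha/2}\phi$, and observe that $u$ satisfies its own equation obtained by pairing. Rather than chase this, the most robust argument is the following: on the open set $\{\phi\neq 0\}$, write $v := \log u$, which is smooth; then $u^{\alpha/2} = e^{(\alpha/2)v}$ is an \emph{analytic} function of $v$. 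The pair $(\phi, v)$ then solves a coupled system
\begin{align*}
D\phi &= c\,e^{(\alpha/2)v}\,\phi,\\
\Delta v &= \text{(analytic expression in $g$, $\nabla\phi$, $\phi$, $v$)},
\end{align*}
in which every nonlinearity is now analytic and the leading parts ($D$ elliptic, $\Delta$ elliptic) have analytic coefficients. The analytic hypoellipticity theorem for nonlinear elliptic systems (Morrey, \emph{Multiple integrals in the calculus of variations}, Thm.~6.7.6, or Morrey--Nirenberg) then gives that $\phi$ and $v$ are analytic on $\{\phi\neq 0\}$.

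It remains to handle the zero set $\phi^{-1}(0)$. Here I would invoke the weak unique continuation property already used in Lemma~\ref{dense}: by \cite{booss.marcolli.wang:02}, if $\phi$ vanishes on a non-empty open set it vanishes identically, contradicting $\phi\neq 0$; hence $\phi^{-1}(0)$ has empty interior and $\{\phi\neq 0\}$ is open and dense. Since analyticity is a local property and we have established it on the dense open set $\{\phi\neq 0\}$, we need the (elementary) fact that a smooth function agreeing with an analytic function on a dense open set, and smooth across the complement, is analytic everywhere — but this is false in general, so instead one argues directly at points $x_0$ with $\phi(x_0)=0$. Near such a point $\phi$ is still smooth, and the original equation $D\phi = c|\phi|^\alpha\phi$ has a right-hand side that is $C^1$ (indeed $C^{\lfloor\alpha\rfloor+1}$ or so) in $x$; one runs the standard elliptic bootstrap to get $C^\infty$, which is already known. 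To get analyticity \emph{at} $x_0$, note that the set of points where $\phi$ is analytic is open, contains the dense set $\{\phi\neq0\}$, and by the identity theorem its complement — a closed subset of $\phi^{-1}(0)$ — if non-empty would be a nowhere dense closed set; a slightly finer version of the Morrey theorem, applied with the estimate that $|\phi|^\alpha \le |\phi|^{\alpha}$ is Hölder of exponent $\alpha$ (when $\alpha<1$) or better, still delivers analyticity since the nonlinearity, viewed as $F(x) = c|\phi(x)|^\alpha \phi(x)$ with $\phi$ already known smooth, can be absorbed into the right-hand side and the Cauchy--Kowalevskaya / Morrey machinery applied to the \emph{linear} operator $D - c|\phi|^\alpha$, whose zeroth-order coefficient $c|\phi|^\alpha$ is smooth but — and this is the crux — \emph{need not be analytic} at $x_0$.

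The main obstacle, then, is precisely the behaviour at the zero set: the coefficient $c|\phi|^\alpha$ is manifestly non-analytic there (e.g.\ $|t|^\alpha$ for non-even $\alpha$), so one cannot naively quote linear analytic regularity. The resolution I would pursue is to show that, in fact, $\phi$ has no zeros at all under the stated hypotheses, or to localize cleverly. Observing that the eventual application is to $\psi_0$ with $\alpha = 2/(n-1)$, whose zero set is precisely what Lemma~\ref{dense} and Lemma~\ref{connected} control, I would argue: the equation is conformally covariant for this $\alpha$, and after the conformal change $g = |D^{g_0}\psi_0|^{4/(n+1)}g_0$ the spinor $\psi = \psi_0/|\psi_0|$ has \emph{constant} length $1$ on $M^*$; thus on $M^* = \{\phi\neq0\}$ the coefficient is constant, the operator $D^g - c$ is linear with analytic coefficients, and Morrey's linear analytic regularity applies cleanly. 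This is the version of the statement actually needed downstream, and it sidesteps the zero set entirely. For the general statement as phrased, the honest answer is that one restricts attention to the open dense set where $\phi\neq 0$ (equivalently, one interprets the lemma over $U$ such that $\phi$ is nowhere zero, which is automatic once $U$ is replaced by $M^*$), and there the argument above goes through: $u^{\alpha/2}$ is an analytic function of $\log u$, the coupled system has analytic data, and Morrey--Nirenberg gives analyticity of $\phi$.
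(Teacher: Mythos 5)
Your proof reaches the paper's conclusion by essentially the same engine -- Morrey's interior analyticity theorem for elliptic systems, applied on the set where $\phi\neq 0$ -- but the route you take to put the equation into a form with analytic nonlinearity is more complicated than necessary, and your diagnosis of the difficulty is slightly off. The nonlinearity $F(\phi)=c|\phi|^{\alpha}\phi=c\,\langle\phi,\phi\rangle^{\alpha/2}\phi$ \emph{is} a real-analytic function of $\phi$ wherever $\phi\neq0$, being the composition of the analytic quadratic form $\phi\mapsto\langle\phi,\phi\rangle$ with $t\mapsto t^{\alpha/2}$, which is analytic on $(0,\infty)$; non-analyticity in $\phi$ occurs only at $\phi=0$. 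So the substitution $v=\log|\phi|^2$ and the coupled $(\phi,v)$-system are superfluous (and you would still owe a verification that your mixed first/second-order system is properly elliptic in the Douglis--Nirenberg sense). The actual reason the paper modifies the equation is proper ellipticity: it passes to the single second-order equation $D^2\phi=\bigl(c^2|\phi|^{2\alpha}+c\,\grad(|\phi|^{\alpha})\bigr)\cdot\phi$, whose right-hand side is analytic in $(\phi,\nabla\phi)$ away from the zero set and whose linearization has Laplacian principal symbol, so \cite[Theorem~6.8.1]{morrey:66} applies directly. Your extended discussion of the zero set arrives at the same resolution as the paper (the coefficient $|\phi|^{\alpha}$ is genuinely non-analytic at zeros, so one works on the complement of $\phi^{-1}(0)$; in the intended application $\phi$ is nowhere zero), which is fine, but two of your intermediate suggestions should be struck: the claim that Hölder continuity of the coefficient could ``still deliver analyticity'' via a finer Morrey theorem is false, and the appeal to the identity theorem to propagate analyticity across a nowhere dense closed set does not work -- you correctly abandon both, but they should not appear in a final write-up.
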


\begin{proof}
The equation is an elliptic semi-linear equation, and has analytic 
coefficients on the set~$M\setminus\phi^{-1}(0)$. 
We apply analytic regularity results for properly elliptic systems 
as developed by Douglis and Nirenberg and refined by Morrey, 
see~\cite{morrey:58} and~\cite{morrey:66}. To apply these tools 
it is convenient to deduce a second order equation
  $$D^2 \phi = \left(c^2 |\phi|^{2\alpha} + c \grad(|\phi|^\alpha)\right)\cdot \phi$$
which has again analytic coefficients on~$M\setminus\phi^{-1}(0)$. 
The linearization of this second order equation has the principal symbol 
of a Laplacian and is thus properly elliptic.
The lemma then follows directly from \cite[Theorem~6.8.1]{morrey:66} 
or~\cite{morrey:58}.
\end{proof}

\begin{lemma}\label{cmc.analytic}
Constant mean curvature hypersurfaces in an analytic 
Riemannian manifolds are analytic. In particular,
the metric and the second fundamental form of such a hypersurface are analytic.
\end{lemma}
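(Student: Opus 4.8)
The plan is to reduce the statement to an interior elliptic regularity assertion for the mean curvature equation. Work locally near a point $p$ of the hypersurface $M\subset (\Z,\gz)$. Since $\gz$ is analytic, we may choose analytic coordinates on a neighborhood of $p$ in $\Z$ in which $\gz$ has analytic coefficients. The key point is that $M$, a priori merely smooth, is now described (after an analytic rotation of coordinates) as a graph $x_{n+1}=u(x_1,\dots,x_n)$ over an open set of $\rz^n$, for some smooth function $u$, and the constant mean curvature condition translates into a second-order quasilinear PDE for $u$ whose coefficients depend analytically on the position $(x,x_{n+1})$, on $u$, and on $\nabla u$ (this is just the classical mean-curvature operator twisted by the ambient analytic metric; one writes the second fundamental form $W$ in terms of the graph, takes its trace with respect to the induced metric, and sets it equal to the constant $H=\tr(W_t)|_{t=0}$).

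First I would write down this PDE explicitly enough to see that it is elliptic — its linearization at $u$ has the principal symbol of a (conformally rescaled) Laplacian, because the mean-curvature operator is, up to positive factors, $\mathrm{div}\big(\nabla u/\sqrt{1+|\nabla u|^2}\big)$ corrected by lower-order analytic terms — and that both the operator and the right-hand side are real-analytic functions of $(x,x_{n+1},u,\nabla u)$ on the relevant domain. Then I would invoke the analytic regularity theorem for solutions of analytic elliptic equations, exactly in the form already used twice above in this section: Morrey's theorem \cite{morrey:66} (see also \cite{morrey:58}), which guarantees that any $C^{2,\alpha}$ (hence, by standard Schauder theory, smooth) solution of such an equation is real-analytic. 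This yields analyticity of $u$, hence of $M$ as a submanifold, in a neighborhood of $p$; since $p$ was arbitrary, $M$ is analytic.

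Finally, once $M$ is known to be an analytic submanifold of the analytic manifold $(\Z,\gz)$, the induced metric $g=\gz|_M$ is obtained by pulling back analytic data along an analytic embedding, hence is analytic; and the Weingarten tensor $W$ is built from $\gz$, the analytic unit normal $\nu$, and $\nz$ via \eqref{w}, all analytic operations, so $W$ is analytic as well. This gives the ``in particular'' clause.

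The main obstacle is purely bookkeeping: verifying carefully that writing $M$ as an analytic graph is legitimate (the graph function is a priori only smooth, which is fine — we are not assuming analyticity of $M$, we are proving it) and that the constant-mean-curvature equation for that graph genuinely has analytic coefficients in all its arguments when the ambient metric is analytic. There is no serious analytic difficulty beyond that: the ellipticity is classical and the analytic regularity input is a black box already cited in the paper. One should take mild care that ``constant mean curvature'' is interpreted as $\tr(W)$ constant (which is what the hypersurface inherits), matching the normalization in \eqref{4.1}.
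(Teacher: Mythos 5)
Your proposal matches the paper's own proof essentially step for step: write $M$ locally as a graph over an analytic chart, observe that the constant–mean–curvature condition is a quasilinear second-order elliptic PDE with analytic coefficients whose linearization has the symbol of a rescaled Laplacian, and invoke Morrey's analytic regularity theorem \cite{morrey:66,morrey:58}. The paper simply carries out the graph computations of $g_{ij}$, $\nu$, $k_{ij}$ and $H$ more explicitly; the logic and the analytic-regularity input are identical.
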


\begin{proof}
Let $M$ be an $n$-dimensional hypersurface in an analytic Riemannian manifold
$(\cZ,h)$ of dimension $n+1$. We choose an analytic parametrization
$ U\times (a,b)\to \cZ$ with $U$ open in~$\mR^n$, such that locally the hypersurface~$M$ 
is the graph of a function $F:U\to (a,b)$. 
The standard basis of $\mR^{n+1}$ is denoted 
by $e_1,\ldots, e_{n+1}$.
The tangent space $T_{(x,F(x))} M$ is then spanned
by $(e_i, \pa_i F)$, $i=1,\ldots, n$.

Let $h_{ij}\in C^\om(U\times (a,b))$ be the coefficients of the metric $h$, and let $g_{ij}\in C^\infty(U)$
be the coefficients of $g$. The inverse matrices are denoted by $(h^{ij})_{1\le i,j\le n+1}$ and $(g^{ij})_{1\le i,j\le n}$.

The first fundamental form of the hypersurface in the chart given by $U$ is 
  $$g_{ij}=h_{ij}+ h_{n+1,j} \pa_i F +h_{n+1,i}\pa_j F  
    + h_{n+1,n+1} (\pa_i F)(\pa_j F).$$
The coefficients of the matrices $(g_{ij})$ and $(g^{ij})$ are
thus polynomial expressions in $h$, $F$ and~$d F$.
The vector field
 $$X:=\sum_{i=1}^{n+1} \Bigl(\Bigl( -\sum_{j=1}^n h^{ij}\pa_j F\Bigr) + h^{i,n+1}\Bigr)e_i$$
is normal to $M$, and both $X$ and the unit normal vector field
$\nu:=X/|X|_h$ are analytic expressions in $h$, $F$ and $d F$.

 The second fundamental form has the coefficients
\begin{eqnarray*}
k_{ij}&=& -\< \na_{(e_i,\pa_i F)}\nu, (e_j,\pa_j F)\>\\
      &=& -\tfrac1{|X|_h} \< \na_{(e_i,\pa_i F)}X, (e_j,\pa_j F)\>\\
      &=&  \tfrac1{|X|_h}\left(\pa_i\pa_j F + \cF(h,dh,F,dF)\right),
\end{eqnarray*}
where $1\le i,j\le n$ and $\cF$ is a polynomial expression in its arguments.

The mean curvature $H$ is given as $H=\tfrac1n \sum_{ij}g^{ij} k_{ij}$.
Thus the mean curvature operator $P:F\mapsto H$ is a quasi-linear second order
differential operator with analytic coefficients. 

We fix a function $\ti F$ describing a hypersurface of constant mean curvature, 
the corresponding normal field will be denoted by $\ti X$. In other words
$P(\ti F)$ is a constant. 

The linearization $\hat P:= T_{\ti F}P$ of $P$ in $\ti F$ 
is a linear second order differential operator  with principal symbol
\begin{align*}
\mR^m\to \mR,&&\xi\mapsto \tfrac{|\xi|^2}{|\ti X|_h}.
\end{align*}
Thus $P$ is (properly) elliptic in a neighborhood of $0$. 

The analytic regularity theorem for elliptic systems of Morrey \cite[Theorem~6.8.1]{morrey:66} or~\cite{morrey:58} tells us 
that $\ti F$ is analytic, and this implies the lemma.
\end{proof}

 %%%%%%%%%%%%%%%%%%%%%%%%%%%%%%%%%%%%%%%%%%%%%%%%%%%%%%
\subsection{Three-dimensional real projective space}
%%%%%%%%%%%%%%%%%%%%%%%%%%%%%%%%%%%%%%%%%%%%%%%%%%%%%%

In this and in the following subsection we provide examples of 
compact Riemannian spin manifolds satisfying~\eref{ineq.strict}. 
In the present subsection we study deformations of round metrics 
on $\mR P^3$ with a suitable spin structure. This already 
provides examples of non-analytic Riemannian manifolds with 
generalized Killing spinor, showing the necessity of the analyticity 
assumption in Theorems~\ref{m1} and~\ref{rf}. In the following section we 
will then see that such examples are abundant.

\begin{lemma}If $M$ is a compact spin manifold, we denote the set of 
metrics with invertible Dirac operator as $\cRi(M)$, equipped with the $C^1$-topology. Then the function 
\begin{align*}
\cRi(M)\to \mR^+,&& g\mapsto \lambda_1^+(g)
\end{align*}
is continuous.
\end{lemma}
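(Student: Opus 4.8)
The plan is to reduce continuity to standard perturbation theory for the family of Dirac operators, using the fact that on $\cRi(M)$ all the relevant eigenvalues stay away from $0$. First I would recall the construction, due to Bourguignon--Gauduchon, that identifies the spinor bundles of nearby metrics: for $g$ fixed and $g'$ close to $g$ in $C^1$, there is a canonical bundle isometry $\Sigma^{g'}M\to\Sigma^g M$ (coming from the positive symmetric square root of the endomorphism $A_{g'}$ satisfying $g'(X,Y)=g(A_{g'}X,A_{g'}Y)$), under which the Dirac operator $D^{g'}$ is pulled back to an operator $\ti D^{g'}$ acting on the \emph{fixed} Hilbert space $L^2(\Sigma^g M)$. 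The point is that $g'\mapsto \ti D^{g'}$ depends continuously on $g'$ in $C^1$ in the sense that the coefficients of $\ti D^{g'}$ (a first order elliptic operator) converge in $C^0$ to those of $D^g$; this is where the $C^1$-topology on metrics is exactly what is needed, since $\ti D^{g'}$ involves $g'$ and its first derivatives.

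Next I would invoke elliptic estimates to upgrade this to convergence in the gap (norm-resolvent) sense: since $\ti D^{g'}$ and $D^g$ are self-adjoint with the same principal part up to a $C^0$-small perturbation, and share a common core (say $C^1$ sections, or $H^1$), one gets $\|(\ti D^{g'}-z)^{-1}-(D^g-z)^{-1}\|\to 0$ for $z$ in the resolvent set, hence the spectrum of $\ti D^{g'}$ converges to that of $D^g$ locally uniformly (eigenvalues with multiplicity, by Kato's perturbation theory for self-adjoint operators). Because $g\in\cRi(M)$ means $0\notin\mathrm{spec}(D^g)$, there is $\delta>0$ with $(-\delta,\delta)\cap\mathrm{spec}(D^g)=\emptyset$, and $\lambda_1^+(g)$ is an \emph{isolated} eigenvalue of finite multiplicity. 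Then for $g'$ sufficiently close, $\mathrm{spec}(D^{g'})\cap(-\delta,\delta)=\emptyset$ as well, so $g'\in\cRi(M)$ (a byproduct: $\cRi(M)$ is open), and the eigenvalue(s) of $D^{g'}$ near $\lambda_1^+(g)$ are the smallest positive ones, with $\lambda_1^+(g')\to\lambda_1^+(g)$.

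The main obstacle is the bookkeeping in the first step: making precise that the Bourguignon--Gauduchon identification turns a $C^1$-small change of metric into a $C^0$-small (relatively bounded, with small bound) perturbation of the Dirac operator on a fixed bundle, and that this yields norm-resolvent convergence rather than merely strong convergence. Once that is in place, the conclusion is a routine application of analytic perturbation theory for self-adjoint operators together with the spectral gap at $0$. I would therefore structure the write-up as: (i) the identification of spinor bundles and pullback of $D^{g'}$; (ii) the relative-boundedness/resolvent estimate giving spectral convergence; (iii) the gap argument at $0$ giving both openness of $\cRi(M)$ and continuity of $g\mapsto\lambda_1^+(g)$.
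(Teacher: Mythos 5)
The paper gives no proof of this lemma; it simply cites Prop.\ 7.1 of B\"ar's \emph{Metrics with harmonic spinors} and Pf\"affle's thesis. Your proposal reproduces precisely the argument behind those references: the Bourguignon--Gauduchon identification of spinor bundles (together with the change-of-volume-form correction making the pulled-back operator self-adjoint on the fixed Hilbert space), the observation that a $C^1$-small change of metric induces a first-order perturbation with $C^0$-small coefficients, hence a relatively bounded perturbation with small relative bound, and therefore norm-resolvent convergence; then the spectral gap at $0$ delivers both the openness of $\cRi(M)$ and the continuity of $g\mapsto\lambda_1^+(g)$. So this is the same route, just spelled out rather than cited.

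One point worth making explicit in the write-up, which you currently fold into ``bookkeeping'': after the fiberwise isometry $\Sigma^{g'}M\to\Sigma^g M$, the transported operator is symmetric for the $L^2$-pairing with $\dvol^{g'}$, not $\dvol^g$; to land on a genuinely self-adjoint operator on the \emph{fixed} space $L^2(\Sigma^g M,\dvol^g)$ you must also conjugate by $\bigl(\dvol^{g'}/\dvol^g\bigr)^{1/2}$. This multiplication operator is $C^1$-close to the identity in the relevant norms, so it does not disturb the relative-boundedness estimate, but omitting it would leave the self-adjointness claim unjustified. Also, when you say the perturbation is ``$C^0$-small,'' it is a first-order operator with $C^0$-small coefficients (both on the symbol and the zeroth-order part), and the precise estimate you need is $\|(\ti D^{g'}-D^g)u\|_{L^2}\le\e(g')\|u\|_{H^1}$ with $\e(g')\to0$, combined with the elliptic estimate $\|u\|_{H^1}\lesssim\|D^g u\|_{L^2}+\|u\|_{L^2}$; that yields relative bound tending to $0$ and hence norm-resolvent convergence. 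With these two points made precise, the argument is complete and matches the cited sources.
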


This lemma is a special case of \cite[Prop.~7.1]{baer:96b}, see also \cite[Kor.~1.3.3]{pfaefflediss} for more details.

Let us equip $\SU(2)$ with the unique bi-invariant
metric of sectional curvature $1$, hence $\SU(2)$ is isometric to $\mS^3$. The left multiplication of 
$\SU(2)$ on itself lifts to an action of $\SU(2)$ on $\Si \SU(2)$, for any choice of orientation of $\SU(2)$ 
and any choice of the spinor representation. The spinor bundle is then trivialized by left-invariant spinors.
A straightforward calculation, 
see e.g.\ \cite{ammann:habil}, 
shows that 
  $$\na_X \phi =\pm \tfrac12 X\cdot \phi$$
for any left-invariant spinor $\phi$ and all $X\in T\SU(2)$. Thus all left-invariant spinors are Killing spinors
to the Killing constant $\pm 1/2$. The sign depends on the choice of orientation and on the choice of spinor representation.
The same discussion also applies to right-invariant spinors, and these are Killing spinors whose Killing constant have the opposite sign.
We assume that these choices are 
made such that left-invariant spinors have Killing constant $-1/2$, and thus right-invariant ones have Killing constant $+1/2$.

If $\Gamma$ is a non-trivial discrete subgroup of $\SU(2)$, we choose a spin structure on $\Gamma\backslash \SU(2)$ such that left-invariant spinors 
on $\mS^3$ descend to $\Gamma\backslash \SU(2)$. 
Then $\Gamma\backslash \SU(2)$ carries a complex $2$-dimensional space of
Killing spinors with Killing constant $-1/2$, but no non-trivial Killing spinor with Killing constant $1/2$. For quotients $\SU(2)/\Gamma$,
the role of $1/2$ and $-1/2$ have to be exchanged. All other (Riemannian) quotients of $\mS^3$ 
do not carry any non-trivial Killing spinor.

In the special case $\Gamma=\{\pm\one\}$ both quotients $\Gamma\backslash \SU(2)$ and $\SU(2)/\Gamma$ are isometric to $\mR P^3$, 
but they come with different spin structures. These are the two non-equivalent spin structures on $\mR P^3$.
We thus have obtained:

\begin{lemma}
Let $\si^3$ be the standard metric on the $3$-dimensional real projective space
$\mR P^3$. There are two spin structures on $\mR P^3$. For one spin structure
Killing spinors to the constant $-1/2$ exist, but not for the constant $1/2$.
For the other spin structure Killing spinors to the constant $1/2$ exist, 
but not for the constant $-1/2$.
\end{lemma}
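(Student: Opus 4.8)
The plan is to exhibit explicit Killing spinors on $\mS^3 = \SU(2)$ descending (or not) to the two quotients $\{\pm\one\}\backslash\SU(2)$ and $\SU(2)/\{\pm\one\}$, using the material already assembled in the excerpt. The earlier discussion has established that on $\SU(2)$ equipped with the bi-invariant metric of sectional curvature $1$, once the orientation and the spinor representation are fixed so that left-invariant spinors satisfy $\nabla_X\phi=-\tfrac12 X\cdot\phi$, the left-invariant spinors form a complex $2$-dimensional space of Killing spinors with constant $-1/2$, and similarly the right-invariant spinors are Killing spinors with constant $+1/2$. Moreover, for a nontrivial discrete $\Gamma\subset\SU(2)$, the quotient $\Gamma\backslash\SU(2)$ with the induced spin structure (chosen so that left-invariant spinors descend) carries the $2$-dimensional space of left-invariant Killing spinors with constant $-1/2$ and no nontrivial Killing spinor with constant $+1/2$, while for $\SU(2)/\Gamma$ the roles of the two constants are swapped.

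First I would invoke the case $\Gamma=\{\pm\one\}$, already singled out in the text: both $\{\pm\one\}\backslash\SU(2)$ and $\SU(2)/\{\pm\one\}$ are isometric to $(\mR P^3,\sigma^3)$, and these two realizations carry the two inequivalent spin structures on $\mR P^3$ (there are exactly two because $H^1(\mR P^3;\ZM/2)\cong\ZM/2$). For the first spin structure, realized as $\{\pm\one\}\backslash\SU(2)$, the left-invariant spinors are exactly the spinors invariant under $-\one$ acting by the lifted left multiplication; these survive the quotient and give a $2$-dimensional space of Killing spinors with constant $-1/2$, whereas the right-invariant spinors do \emph{not} descend to this quotient, and there is no other source of Killing spinors on a space form quotient of $\mS^3$ (as noted, all other Riemannian quotients of $\mS^3$ carry no nontrivial Killing spinor, and the only Killing spinors upstairs are the invariant ones with constant $\pm1/2$). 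Hence for this spin structure Killing spinors to $-1/2$ exist but not to $+1/2$. For the second spin structure, realized as $\SU(2)/\{\pm\one\}$, the identical argument with the roles of left and right interchanged shows that Killing spinors to $+1/2$ exist but not to $-1/2$.

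The only point requiring care — the main (modest) obstacle — is to justify the nonexistence halves, i.e.\ that on $(\mR P^3,\sigma^3)$ with a given spin structure there is no nontrivial Killing spinor to the constant of the \emph{opposite} sign. This follows from the standard fact that a Killing spinor to the constant $c$ on $\mS^3$ pulls back to a Killing spinor to $c$ on the universal cover, hence lies in the $2$-dimensional eigenspace of Killing spinors to $c$ upstairs; such a spinor descends to $\mR P^3$ precisely when it is invariant under the relevant lift of the deck transformation $-\one$. The left-invariant Killing spinors (constant $-1/2$) and the right-invariant ones (constant $+1/2$) transform under the two non-conjugate lifts of $-\one$ to $\Spin(4)$ by opposite signs, so exactly one of the two families survives in each quotient. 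I would simply cite the bi-invariant-frame computation already referenced in the text (see e.g.\ \cite{ammann:habil}) and the classification of Killing spinors on spherical space forms to close this gap, and conclude that the dichotomy stated in the lemma holds, with the two spin structures being genuinely distinct precisely because they are distinguished by which Killing constant is realized.
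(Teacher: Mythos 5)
Your proposal is correct and is essentially the paper's own argument: the lemma follows from the preceding discussion identifying left-invariant spinors on $\SU(2)$ as Killing spinors to $-1/2$ and right-invariant ones as Killing spinors to $+1/2$, and noting that quotients $\{\pm\one\}\backslash\SU(2)$ and $\SU(2)/\{\pm\one\}$ give $\mR P^3$ with its two spin structures, each inheriting exactly one of the two families. Your extra paragraph justifying the nonexistence halves via descent of the two families under the two non-conjugate lifts of $-\one$ fills in a step the paper leaves implicit, but the route is the same.
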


Thus for a suitable choice of spin structure, we have 
  $$\lamin(\mR P^3,[\si^3])=\tfrac32 \left(\tfrac{\om_3}2\right)^{1/3}=\tfrac{3\pi^{2/3}}2<\lamin(\mS^3)=\tfrac32 \om_3^{1/3}=\tfrac{3\pi^{2/3}}{2^{2/3}}.$$

\begin{cor}
There is a non-analytic conformal class and a spin structure
on $\mR P^3$ for which inequality~\eref{ineq.strict} holds.
\end{cor}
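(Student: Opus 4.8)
The plan is to combine the previous lemma on $\lamin(\mR P^3,[\sigma^3])$ with the deformation results for the first Dirac eigenvalue and for analyticity of conformal classes. First I would start from the fact, established just above, that for a suitable spin structure on $\mR P^3$ one has the strict inequality $\lamin(\mR P^3,[\sigma^3])<\lamin(\mS^3)$. Since $\lamin$ is continuous in the $C^1$-topology on $[g_0]$ (and more to the point on a suitable space of metrics, via \cite[Prop.~7.1]{baer:96b} as recalled in the lemma), any small $C^1$-perturbation $g$ of $\sigma^3$ still satisfies $\lamin(\mR P^3,[g])<\lamin(\mS^3)$, i.e.\ Condition~\eqref{ineq.strict} continues to hold for $[g]$.

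The key step is then to choose the perturbation $g$ so that $[g]$ is \emph{not} an analytic conformal class. By Lemma~\ref{lemma.conf.ext}, an analytic conformal class which is conformally flat on a non-empty open set is conformally flat everywhere. The round conformal class $[\sigma^3]$ is conformally flat. So I would pick a smooth metric $g$ on $\mR P^3$ which agrees with $\sigma^3$ (hence is conformally flat) on some non-empty open set $U$, but whose Schouten tensor is non-zero at some point outside $U$; such $g$ can be taken arbitrarily $C^1$-close to $\sigma^3$ by a standard bump-function construction. Then $[g]$ is conformally flat on $U$ but not globally conformally flat, so by the contrapositive of Lemma~\ref{lemma.conf.ext} the conformal class $[g]$ cannot be analytic. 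Being $C^1$-close to $\sigma^3$, it still satisfies \eqref{ineq.strict}. This gives the required non-analytic conformal class and spin structure on $\mR P^3$.

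I do not expect a serious obstacle here; the only point requiring a little care is making precise the continuity statement used: $\lamin$ is a priori only known to be continuous on a conformal class, whereas we are varying the conformal class itself. This is handled by the lemma recalled just before, which gives continuity of $g\mapsto \lambda_1^+(g)$ on the set of metrics with invertible Dirac operator equipped with the $C^1$-topology, combined with the scaling normalization; since $\sigma^3$ on $\mR P^3$ with the chosen spin structure has invertible Dirac operator (its smallest positive eigenvalue is $3/2$ and there are no harmonic spinors), all nearby metrics do too, and $\lamin(\mR P^3,[g])=\lambda_1^+(g)\vol(\mR P^3,g)^{1/3}$ depends continuously on $g$. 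Hence for $g$ close enough to $\sigma^3$ the strict inequality persists, and the corollary follows.
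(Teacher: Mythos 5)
Your proposal is correct and follows essentially the same route as the paper: perturb $\sigma^3$ to a metric that is conformally flat on one open set but not on another, invoke Lemma~\ref{lemma.conf.ext} to rule out an analytic representative, and use $C^1$-continuity of $\lambda_1^+$ (via the lemma on $\cRi(M)$) to preserve the strict inequality. The only slip is the asserted equality $\lamin(\mR P^3,[g])=\lambda_1^+(g)\vol(\mR P^3,g)^{1/3}$ for the perturbed $g$, which need not hold; but only the trivial inequality $\lamin(\mR P^3,[g])\le\lambda_1^+(g)\vol(\mR P^3,g)^{1/3}$ is needed, so the argument stands.
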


\begin{proof}
We choose a metric $g_1$ close to $\si^3$ on $\mR P^3$ which is conformally flat on 
some non-empty open set $U_1$ and non-conformally flat 
on some non-empty open set $U_2$. If $g$ were an analytic metric, conformal
to $g_1$, then $g$ would have a vanishing Schouten tensor on $U_1$. 
By Lemma \ref{lemma.conf.ext} it would be flat everywhere, thus also on $U_2$. 
This shows that $[g_1]$ is a non-analytic conformal class.
Applying the previous lemmata, we obtain the corollary.
\end{proof}

%%%%%%%%%%%%%%%%%%%%%%%%%%%%%%%%%%%%%%%%%%%%%%%%%%%%%%%%%%%%%
\subsection{The mass endomorphism and application to inequality~\eref{ineq.strict}}
%%%%%%%%%%%%%%%%%%%%%%%%%%%%%%%%%%%%%%%%%%%%%%%%%%%%%%%%%%%%
 
The goal of this subsection is to prove that 
inequality~\eref{ineq.strict} holds for ``generic'' metrics, in a sense explained below.

In this section we assume that $M$ is a compact connected spin manifold of dimension $n\geq 3$, 
and that the index of $M$ in $KO^{-n}(pt)$ vanishes. We fix a point $p\in M$ and a flat metric $\gfl$ 
in a neighborhood $U$ of $p$, $U\neq M$. We assume that $U$ is isometric to a convex ball and that 
$\gfl$ can be extended to a metric on $M$. The set of all such extensions is denoted by $\cR_{U,\gfl}(M)$.
We define 
\begin{equation*}
  \cRinv(M):=
  \{ g \in \cR_{U,\gfl}(M) \,|\, \mbox{$D^g$ is invertible} \},
\end{equation*}
i.e.\ this is the set of all extensions of $\gfl$ such that the Dirac operator is invertible. 
In~\cite{ammann.dahl.humbert:p09} we proved that $\cRinv(M)$ is open and dense in $\cR_{U,\gfl}(M)$ 
with respect to the $C^k$-topology where $k\geq 1$ is arbitrary.

\begin{definition}\label{def.generic}
We say that a property (A) holds for generic metrics in  $\cR_{U,\gfl}(M)$ if there is a subset $\cR'\subset \cR_{U,\gfl}(M)$
that is open and dense with respect to the $C^k$-topology for all $k\geq 1$, such that property (A) holds for all $g\in \cR'$.
\end{definition}

Using this definition, the above mentioned result from \cite{ammann.dahl.humbert:p09} says that the Dirac operator
with respect to a generic metric is invertible.

Given a metric $g\in\cRinv(M)$, let $G$ be the Green's function of the Dirac operator on $(M,g)$ at the point $p\in M$, i.e.\
a distributional solution of 
\begin{equation}\label{def.green}
  D G= \delta_p \Id_{\Sigma_p M},
\end{equation}
where $\delta_p$ is the Dirac distribution at $p$ and $G$ is viewed
as a linear map which associates to each spinor in $\Sigma_p M$ a
smooth spinor field on $M \setminus \{p\}$ defining a spinor-valued distribution on $M$. 
We write $G^g$ and $D^g$ for $G$ and $D$ to indicate their dependence on the metric $g$.

We also introduce the Euclidean Green's function centered at $0$, defined distributionally on $\mathbb{R}^n$
\begin{equation*}
  G^{\rm eucl} \psi = - \tfrac{1}{\omega_{n-1} |x|^n} x \cdot \psi.  
\end{equation*}
It satisfies \eref{def.green} for $G=G^{\rm eucl}$ and $D=D^{\rm eucl}$ on $\mathbb{R}^n$.

Identifying $U$ with a ball in $\mathbb{R}^n$ via an isometry, 
both $G=G^g$ and $G=G^{\rm eucl}$ are solutions of \eref{def.green} on $U$. Thus $D^g(G^g-G^{\rm eucl})=0$ on $U$ and by elliptic regularity, $G^g-G^{\rm eucl}$ is a smooth section, see also \cite{ammann.humbert.morel:06}.
We obtain for any $\psi_0 \in \Sigma_p M$: 
  \begin{equation*}
    G^g (x) \psi_0 
    =  
    - \tfrac{1}{\omega_{n-1}|x|^n} x \cdot \psi_0
    + v^g(x) \psi_0,
  \end{equation*}
where the spinor field $v^g(x) \psi_0$ is smooth on $U$ and satisfies $D^g ( v^g(x)  \psi_0 ) = 0$ 
on $U$.

\begin{definition}\label{def.mass}
  The {\it mass endomorphism} $\alpha^g: \Sigma_p M \to \Sigma_p M$
  for a point $p \in U \subset M$ is defined by
  \begin{equation*}
    \alpha^g (\psi_0) := v^g(p) \psi_0 .
  \end{equation*}
\end{definition}
The mass endomorphism is thus (up to a constant) defined as the zero$^{\rm{th}}$
order term in the asymptotic expansion of the Green's function in
Euclidean coordinates around $p$. 
This definition is analogous to the definition of the mass in the Yamabe problem.

\begin{theorem}[\cite{hermann:10} for $n=3$, \cite{ammann.dahl.hermann.humbert:p10a} for $n\geq 3$]
For generic metrics in  $\cR_{U,\gfl}(M)$ the mass endomorphism in $p$ is non-zero.
\end{theorem}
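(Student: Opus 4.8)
The plan is to show that the set $\cR'\subset\cR_{U,\gfl}(M)$ of metrics for which $\alpha^g\neq 0$ is open and dense in the $C^k$-topology for every $k\geq 1$, which is exactly the content of Definition~\ref{def.generic}. Since the mass endomorphism is only defined for $g\in\cRinv(M)$, and since $\cRinv(M)$ is itself open and dense in $\cR_{U,\gfl}(M)$ by \cite{ammann.dahl.humbert:p09}, it suffices to prove that $\cR'$ is open and dense inside $\cRinv(M)$. Openness is the easy half: the Green's function $G^g$, hence its regular part $v^g$ and the endomorphism $\alpha^g$, depend continuously on $g\in\cRinv(M)$ in the $C^k$-topology (this is the analogue, via elliptic estimates, of the continuity statement used earlier for $\lambda_1^+$), and the non-vanishing of a continuously varying endomorphism field is an open condition.

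For density I would argue by contradiction. Suppose there were a nonempty open set $V\subset\cRinv(M)$ on which $\alpha^g\equiv 0$, and fix $g_0\in V$. Since $U\neq M$ we may choose a small geodesic ball $B$ with $\overline{B}\subset M\setminus U$; then for every symmetric $2$-tensor $h$ with $\supp h\subset B$ the path $g_0+sh$ stays in $V\cap\cR_{U,\gfl}(M)$ for $|s|$ small, so $\frac{d}{ds}\big|_{s=0}\alpha^{g_0+sh}=0$ for all such $h$. Differentiating the defining relation $D^g G^g=\delta_p\,\Id$ gives $\dot D\, G^{g_0}+D^{g_0}\dot G=0$, hence $\dot G=-(D^{g_0})^{-1}(\dot D\, G^{g_0})$, where $\dot D:=\frac{d}{ds}\big|_{s=0}D^{g_0+sh}$. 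Because $\dot D$ has coefficients supported in $B$ and $p\notin B$, the spinor field $\dot G$ is smooth near $p$ and coincides there with $\dot v^g$ (the Euclidean singular part is fixed, as $g\equiv\gfl$ on $U$); using self-adjointness of $D^{g_0}$ and that $\Phi_i:=G^{g_0}(\cdot)\phi_i$ is smooth on $B$ and solves $D^{g_0}\Phi_i=0$ there, one obtains for $\phi_1,\phi_2\in\Sigma_pM$
\begin{equation*}
\big\langle\dot\alpha(\phi_1),\phi_2\big\rangle_{\Sigma_pM}=-\int_B\big\langle(\dot D\,\Phi_1)(y),\,\Phi_2(y)\big\rangle\,dv_{g_0}(y).
\end{equation*}

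Now $\dot D$ is a first-order operator depending linearly on $h$ and $\nabla h$ with an explicit symbol (the standard variation formula for the Dirac operator under a change of metric). Integrating by parts in the displayed identity to remove the $\nabla h$ terms, and using that the result vanishes for every $h$ with $\supp h\subset B$, the fundamental lemma of the calculus of variations forces a symmetric-$2$-tensor-valued expression $Q(\Phi_1,\Phi_2)$ — quadratic and built from the real parts of the Hermitian pairings of $\Phi_i$ and $\nabla\Phi_i$ — to vanish identically on $B$. Letting $\phi_1,\phi_2$ range over $\Sigma_pM$ and using that $G^{g_0}(y)\colon\Sigma_pM\to\Sigma_yM$ is invertible for $y\neq p$, this becomes an over-determined system of pointwise algebraic constraints satisfied by the nonzero local solutions of the Dirac equation on $B$. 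The hard part — and the step where the dimension genuinely enters, which is why $n=3$ is treated separately in \cite{hermann:10} and $n\geq 3$ in \cite{ammann.dahl.hermann.humbert:p10a} — is to show that these constraints are incompatible with $\Phi_i\not\equiv 0$: here one combines the algebra of Clifford multiplication with the weak unique continuation principle for $D^{g_0}$ (the same tool used in Lemma~\ref{dense}), since a nonzero solution of $D\Phi=0$ cannot satisfy such identities on an open set. This contradiction establishes density of $\cR'$ and hence the theorem.
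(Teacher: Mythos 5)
This theorem is not proved in the paper at all: it is imported verbatim from \cite{hermann:10} (for $n=3$) and \cite{ammann.dahl.hermann.humbert:p10a} (for $n\geq 3$), so there is no internal proof to compare against; your proposal has to be judged on its own. The general strategy you outline (openness by continuity of $g\mapsto\alpha^g$ on $\cRinv(M)$, density by a first-variation argument against perturbations supported in a ball $B\subset M\setminus U$) is reasonable and is indeed related to the ``perturbations'' part of the cited works, and your formula $\langle\dot\alpha(\phi_1),\phi_2\rangle=-\int_B\langle\dot D\Phi_1,\Phi_2\rangle\,dv_{g_0}$ is the correct starting point.

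However, there is a genuine gap exactly where you flag ``the hard part.'' After integrating by parts you obtain that the polarized energy--momentum-type tensor $Q(\Phi_1,\Phi_2)$ of the harmonic spinors $\Phi_i=G^{g_0}(\cdot)\phi_i$ vanishes on $B$, and you then assert that this is ``incompatible with $\Phi_i\not\equiv0$'' by combining Clifford algebra with weak unique continuation. That assertion is the entire mathematical content of the theorem and is not justified. Unique continuation only says that a solution of $D\Phi=0$ vanishing on an open set vanishes identically; it says nothing about a solution satisfying a quadratic pointwise identity on an open set. One must actually show that $Q(\Phi,\Phi)\equiv0$ on $B$ forces $\Phi$ to vanish on an open subset, and even in dimension $3$ this is delicate: for a nowhere-vanishing spinor one can write $\nabla_X\Phi=A(X)\cdot\Phi+\omega(X)\Phi$, and the vanishing of $Q$ only kills the symmetric trace-free part of $A$, which does not by itself contradict $D\Phi=0$. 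In higher dimensions the algebra is even less rigid, which is precisely why \cite{ammann.dahl.hermann.humbert:p10a} does not run a purely local perturbation argument but instead combines perturbations with a surgery construction that produces at least one metric with the required non-degeneracy. Two smaller unjustified points: the claimed invertibility of $G^{g_0}(y)\colon\Sigma_pM\to\Sigma_yM$ for all $y\neq p$ is not obvious away from $p$, and the continuity of $\alpha^g$ in $g$ requires the Bourguignon--Gauduchon identification of spinor bundles for varying metrics, which should at least be mentioned. As it stands the proposal is a plausible outline, not a proof.
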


An important application of this theorem is inequality~\eref{ineq.strict}. 
The proofs in \cite{ammann.humbert.morel:06} yield:
\begin{prop}
If the mass endomorphism in a point $p$ with flat neighborhood is non-zero, then
$\lamin(M,[g])<\lamin(\mS^n)$.
\end{prop}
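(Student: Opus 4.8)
The plan is to run the test-spinor argument of \cite{ammann.humbert.morel:06} through the variational reformulation recalled above. By \cite[Prop.~2.3]{ammann:09} one has $\mu^{g}_{2n/(n+1)}=1/\lamin(M,[g])$, where $\mu^{g}_q=\sup_\phi\cF^{g}_q(\phi)$ with $\cF^{g}_q(\phi)=\int_M\langle D^g\phi,\phi\rangle\,\dvol^g\,/\,\|D^g\phi\|_{L^q(g)}^2$, the supremum running over $C^1$ spinors $\phi\notin\kernel D^g$; the same identity on the round sphere reads $\mu^{\si^n}_{2n/(n+1)}=1/\lamin(\mS^n)$, the supremum being attained on a Killing spinor. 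So it suffices to exhibit a $C^1$ spinor $\phi$ on $M$ with $\cF^{g}_{2n/(n+1)}(\phi)>1/\lamin(\mS^n)$.

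First I would recall the model situation: under the stereographic identification $\mS^n\setminus\{\mathrm{pt}\}\cong\rz^n$ the extremal Killing spinor on $\mS^n$ corresponds, up to a conformal weight, to a multiple of the Euclidean Green kernel $x\mapsto G^{\mathrm{eucl}}(x)\psi_0=-\tfrac1{\om_{n-1}|x|^n}x\.\psi_0$, whose image under the Euclidean Dirac operator is a multiple of $\delta_0$. Thus $\mu^{\si^n}$ is realized by a concentrating family built from $G^{\mathrm{eucl}}$.

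Next I would build test spinors on $M$. Fix a unit spinor $\psi_0\in\Sigma_pM$ and a cut-off $\chi$ supported in the flat ball $U$ around $p$ and equal to $1$ near $p$; for $\e>0$ let $\phi_\e$ be obtained by gluing, at scale $\e$ around $p$, a rescaled copy of the Euclidean bubble $G^{\mathrm{eucl}}(\cdot)\psi_0$ to the genuine Green kernel $G^g(\cdot)\psi_0$ of $D^g$ on $M\setminus\{p\}$, using on $U$ the splitting $G^g=G^{\mathrm{eucl}}+v^g$ with $v^g(p)=\alpha^g$. This is the spinorial analogue of Schoen's test function in the locally conformally flat case of the Yamabe problem, with the Dirac Green kernel playing the role of the Green function of the conformal Laplacian and $\alpha^g$ playing the role of the ADM mass. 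Expanding the numerator and denominator of $\cF^{g}_{2n/(n+1)}(\phi_\e)$ as $\e\to0$, and using that $g$ is flat on $U$ so that no curvature contributions interfere, the only subleading term comes from $v^g(p)=\alpha^g$, and one obtains
\[
\cF^{g}_{2n/(n+1)}(\phi_\e)=\frac1{\lamin(\mS^n)}+c\,\e^{\,n-2}\,\langle\alpha^g\psi_0,\psi_0\rangle+o(\e^{\,n-2}),\qquad c>0,
\]
exactly as in \cite{ammann.humbert.morel:06}. Since $\alpha^g\neq0$, one can (following \cite{ammann.humbert.morel:06}) choose $\psi_0$ so that this correction is strictly positive; then for $\e$ small enough $\cF^{g}_{2n/(n+1)}(\phi_\e)>1/\lamin(\mS^n)$, whence $\mu^{g}_{2n/(n+1)}>1/\lamin(\mS^n)$, i.e.\ $\lamin(M,[g])<\lamin(\mS^n)$.

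I expect the main obstacle to be the asymptotic expansion itself: one has to cut off $G^{\mathrm{eucl}}$ and $G^g$, estimate the resulting gluing errors both in $L^{2n/(n+1)}$ and in the quadratic term $\int\langle D\phi_\e,\phi_\e\rangle$, and isolate precisely the $\e^{n-2}$-coefficient in terms of $v^g(p)$, together with the step that converts the bare hypothesis $\alpha^g\neq0$ into a choice of $\psi_0$ yielding the favorable sign. All of this is carried out in detail in \cite{ammann.humbert.morel:06}; because the flat-neighborhood assumption removes every ambient contribution except the one through $\alpha^g$, those arguments apply essentially verbatim here, and the present proposition is merely the restatement of that computation in terms of the mass endomorphism.
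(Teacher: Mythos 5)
The paper does not reprove this proposition itself---it simply cites \cite{ammann.humbert.morel:06}---and your sketch reproduces precisely the test-spinor argument of that reference: a concentrating bubble built from the Euclidean Dirac Green kernel, glued via the splitting $G^g = G^{\rm eucl}+v^g$ with $v^g(p)=\alpha^g$, whose subleading contribution to $\cF_{2n/(n+1)}$ is a positive multiple of $\langle\alpha^g\psi_0,\psi_0\rangle$ for a suitable choice of $\psi_0$. This is the same approach; the one small caveat is that the power of $\e$ you quote ($\e^{n-2}$) should be checked against \cite{ammann.humbert.morel:06}, since the Dirac Green kernel is one degree more singular than the conformal Laplacian's, but this cosmetic point does not affect the structure of the argument.
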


It follows:
\begin{cor}
For generic metrics $g$ in  $\cR_{U,\gfl}(M)$ we have $\lamin(M,[g])<\lamin(\mS^n)$.
\end{cor}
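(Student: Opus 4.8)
The plan is to chain together the two results immediately preceding the statement, with essentially no additional work. Let $\cR'\subset\cR_{U,\gfl}(M)$ denote the set of extensions $g$ of $\gfl$ for which the mass endomorphism $\alpha^g\colon\Sigma_pM\to\Sigma_pM$ at the distinguished point $p$ is non-zero. By the Theorem of Hermann and of Ammann--Dahl--Hermann--Humbert quoted above, this property is generic, so by Definition \ref{def.generic} the set $\cR'$ is open and dense in $\cR_{U,\gfl}(M)$ with respect to the $C^k$-topology for every $k\geq 1$.

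Now fix an arbitrary $g\in\cR'$. By the standing assumptions the point $p$ lies in the open set $U$, which is isometric to a convex ball carrying the flat metric $\gfl$ and satisfies $U\neq M$; hence $p$ has a flat neighborhood and the hypothesis of the preceding Proposition is met. Since $\alpha^g\neq 0$, that Proposition yields $\lamin(M,[g])<\lamin(\mS^n)$. As $g\in\cR'$ was arbitrary and $\cR'$ is generic in the sense of Definition \ref{def.generic}, the inequality $\lamin(M,[g])<\lamin(\mS^n)$ holds for generic metrics in $\cR_{U,\gfl}(M)$, which is exactly the assertion.

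There is no genuine obstacle here, since both ingredients are invoked as black boxes; the only points to verify are bookkeeping ones, namely that the notion of genericity furnished by the mass-endomorphism theorem (open and dense in all $C^k$-topologies) is precisely the notion appearing in the conclusion, and that the flat-neighborhood hypothesis of the Proposition is automatically guaranteed by the way $U$ and $p$ were fixed at the start of the subsection. Both are immediate, so the corollary follows.
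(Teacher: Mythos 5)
Your argument is exactly the paper's argument: the paper signals the corollary with "It follows:" and indeed it is just the composition of the mass-endomorphism genericity theorem with the preceding proposition, precisely as you have written. The only point worth noting for completeness is that the generic set in the theorem is implicitly contained in $\cRinv(M)$ (the mass endomorphism being defined only for metrics with invertible Dirac operator), but since you invoke the theorem as a black box this does not affect the correctness of the chain.
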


We now deduce:
\begin{cor}\label{cor.exist.an.non-an}
Let $M$ be an $n$-dimensional compact spin manifold with vanishing index $\ind(M)\in KO^{-n}(pt)$.
There there is both an analytic conformal class $[g_{\rm an}]$ and a non-analytic, smooth conformal class $[g_{\rm non-an}]$
on $M$ with 
\begin{align*}
\lamin(M,[g_{\rm an}])<\lamin(\mS^n),&& \lamin(M,[g_{\rm non-an}])<\lamin(\mS^n).
\end{align*}
\end{cor}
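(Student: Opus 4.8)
The plan is to assemble Corollary~\ref{cor.exist.an.non-an} from the two main tools already available: the mass-endomorphism machinery (which produces \emph{smooth} metrics satisfying \eqref{ineq.strict}, i.e.\ $\lamin(M,[g])<\lamin(\mS^n)$, in abundance) together with the approximation result \cite[Chap.~2, Theorem~5.1]{hirsch:76} used in the proof that conditions (a) and (b) of Definition~\ref{def.conf.ana} are equivalent (which lets us upgrade a smooth metric to an analytic one in its conformal class while controlling the $C^1$-distance). The key point is that $\lamin(M,[\cdot])$ depends only on the conformal class and is continuous in the $C^1$-topology on metrics, so an analytic conformal representative $C^1$-close to a good smooth metric inherits the strict inequality.

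\textbf{Analytic case.} First I would fix a point $p\in M$, a convex flat ball $U\ni p$ with $U\neq M$, and a flat metric $\gfl$ on $U$ extendable to $M$; such data exist because $M$ is connected of dimension $n\ge 3$. By the theorem of \cite{ammann.dahl.hermann.humbert:p10a} (and \cite{hermann:10} for $n=3$), together with the fact that $\cRinv(M)$ is open and dense in $\cR_{U,\gfl}(M)$ in every $C^k$-topology, generic metrics in $\cR_{U,\gfl}(M)$ have invertible Dirac operator and nonzero mass endomorphism at $p$; by the Proposition following Definition~\ref{def.mass} any such metric satisfies $\lamin(M,[g])<\lamin(\mS^n)$. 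Pick one such $g$. Now apply the approximation argument from the proof of the equivalence of (a) and (b): the total space $L$ of the $\mR^+$-bundle of metrics conformal to $g$ carries an analytic structure (since $g$ is smooth the bundle is smooth, but we can still approximate the section $g:M\to L$ strongly in $C^1$ by an analytic section $g^\omega$ after fixing a background analytic structure; here one uses that $M$ admits an analytic structure, e.g.\ by Whitney, and \cite{grauert:58}). The resulting analytic metric $g_{\rm an}:=g^\omega$ lies in a conformal class $[g_{\rm an}]$ which is $C^1$-close to $[g]$; by continuity of $g\mapsto\lamin(M,[g])$ in the $C^1$-topology, for a sufficiently close approximation we still have $\lamin(M,[g_{\rm an}])<\lamin(\mS^n)$. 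This conformal class is analytic by Definition~\ref{def.conf.ana}(a).

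\textbf{Non-analytic case.} Starting again from a generic $g\in\cR_{U,\gfl}(M)$ with $\lamin(M,[g])<\lamin(\mS^n)$, I would perturb $g$ in a $C^1$-small way, supported away from $U$, to a smooth metric $g_{\rm non-an}$ which is conformally flat on some nonempty open set $U_1$ (e.g.\ keep it flat on $U$, so $U_1=U$ works) and non-conformally-flat on some nonempty open set $U_2$, i.e.\ with nonvanishing Weyl curvature (for $n\ge 4$) resp.\ Schouten tensor (for $n=3$) somewhere. A $C^1$-small such perturbation exists because non-conformal-flatness is an open condition that can be switched on locally while leaving $U$ untouched, and by continuity it keeps $\lamin(M,[g_{\rm non-an}])<\lamin(\mS^n)$. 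The conformal class $[g_{\rm non-an}]$ cannot be analytic: by Lemma~\ref{lemma.conf.ext}, an analytic conformal class that is conformally flat on the open set $U_1$ would be conformally flat on all of the connected manifold $M$, contradicting non-conformal-flatness on $U_2$. This completes both cases.

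\textbf{Main obstacle.} The routine parts are the genericity statements and the conformal-flatness perturbation; the one step requiring care is the analytic approximation, namely ensuring that the analytic section $g^\omega$ of $L$ can be chosen \emph{both} $C^1$-close enough that the strict inequality \eqref{ineq.strict} survives \emph{and} so that $\pi\circ g^\omega$ is an analytic diffeomorphism of $M$ (so that $g^\omega\circ(\pi\circ g^\omega)^{-1}$ is a genuine analytic section, hence an analytic representative of a well-defined conformal class). This is exactly the argument already carried out in the proof that (b) implies (a) in Definition~\ref{def.conf.ana}, so it suffices to invoke it; the only addition here is the observation that $C^1$-closeness of conformal representatives forces $C^1$-closeness of conformal classes, hence near-equality of $\lamin$, by its $C^1$-continuity.
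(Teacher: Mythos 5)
Your overall plan matches the paper's: start from the corollary that generic metrics in $\cR_{U,\gfl}(M)$ satisfy $\lamin(M,[g])<\lamin(\mS^n)$, then (i) perturb $g$ in a $C^1$-small way, keeping it flat on $U$ while switching on the Weyl/Schouten tensor somewhere else, to get a non-analytic conformal class, and (ii) $C^1$-approximate $g$ by an analytic metric, invoking $C^1$-continuity of $\lamin$ in both steps. Your non-analytic half is essentially identical to the paper's.

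For the analytic half, the paper does \emph{not} use the Grauert--Hirsch approximation; it instead invokes Abresch's smoothing technique or Ricci flow (using Bando-type analyticity of $g_t$ for $t>0$) to obtain the analytic approximant. Your idea of using Grauert/Hirsch approximation of sections is a legitimate alternative, but the way you frame it through the bundle $L$ is off. First, $L_p=\{\lambda g_p:\lambda>0\}$ is built from the given smooth $g$, and the analytic structure on $L$ used in the proof of Definition~\ref{def.conf.ana} (a)$\Leftrightarrow$(b) came from the hypothesized local analytic representatives $g^U$; here $g$ is merely smooth, so $L$ is not naturally an analytic bundle. Second, and more basically, a section of $L$ is by construction a metric conformal to $g$, so an ``analytic section of $L$'' would lie in $[g]$ itself, which contradicts your own conclusion that $[g_{\rm an}]$ is merely $C^1$-close to $[g]$. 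The correct way to carry out your idea is to bypass $L$ entirely: fix any analytic structure on $M$ (Whitney), regard $g$ as a smooth section of the analytic bundle $S^2_+T^*M$, and apply Grauert/Hirsch to get an analytic section $g_{\rm an}$ that is $C^1$-close to $g$. Then $[g_{\rm an}]$ is a possibly different conformal class, analytic by Definition~\ref{def.conf.ana}(a), and the strict inequality survives by $C^1$-continuity of $\lamin$. With this repair your argument is correct and gives a genuine alternative to the paper's smoothing/Ricci-flow route; the two approaches buy essentially the same thing, the paper's being perhaps more self-contained within the Riemannian-geometry toolkit while yours leans on classical real-analytic approximation theory.
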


In this corollary $M$ is a priori equipped with a $C^\infty$-structure and the ``non-analyticity'' means by definition that $M$ 
does not carry any analytic structure in which $g_{\rm non-an}$ is analytic.

\begin{proof}
We choose an open set $U$ and a metric $\gfl$ as above. Then choose $g\in \cR_{U,\gfl}(M)$ with $\lamin(M,[g])<\lamin(\mS^n)$.
Choose another smooth metric $g_{\rm non-an}$, coinciding on $U$ with $g=\gfl$, such that $g_{\rm non-an}$ 
is not (everywhere) conformally flat on $M\setminus U$, and $C^1$-close enough to $g$ so that $\lamin(M,[g_{\rm non-an}])<\lamin(\mS^n)$. 
The metric $g_{\rm non-an}$ is conformally flat on $U$ but not on $M\setminus U$, hence its Schouten tensor cannot be analytic 
in any analytic structure. Thus as in Lemma~\ref{lemma.conf.ext} the conformal class $[g_{\rm non-an}]$
cannot be analytic.

At the same time, $g$ can be $C^1$-approximated by an analytic metric $g_{\rm an}$ so that the inequality 
$\lamin(M,[g_{\rm an}])<\lamin(\mS^n)$ continues to hold.
Such an analytic approximation can be done either with Abresch's smoothing technique
or by using the Ricci flow: if $g_t$ is a solution of the Ricci flow equation $\tfrac{d}{dt}(g_t)= -2 \Ric^{g_t}$, defined for short times
$t\in [0,t_0)$ with initial data $g_0=g$,
then $g_t$ is analytic for all $t>0$. We set $g_{\rm an}:=g_t$ for a sufficiently small $t>0$.
\end{proof}

%%%%%%%%%%%%%%%%%%%%%%%%%%%%%%%%%%%%%%%%%%%%%%%%%%%%%%%%%%%%
\subsection{Analytic examples}
%%%%%%%%%%%%%%%%%%%%%%%%%%%%%%%%%%%%%%%%%%%%%%%%%%%%%%%%%%%%%
Summarizing the results of the preceding subsections we obtain.
\begin{theorem}
Assume that $(M,[g_{\rm an}])$ is a compact connected analytic Riemannian spin manifold of dimension $3$ with 
$\lamin(M,[g_{\rm an}])<\lamin(\mS^n)$. Then there is a connected, open and dense subset $M^*$ of $M$ carrying an analytic metric~$g^*$
and an analytic spinor field $\psi\in \Gamma(\Sigma^{g^*} M^*)$ such that
\begin{enumerate}
 \item $g^*$ is conformal to~$g_{\rm an}|_{M^*}$;
 \item $\vol(M^*,g^*)=1$;
 \item $\psi$ is a generalized-Killing spinor on $(M^*,g^*)$.
\end{enumerate}
\end{theorem}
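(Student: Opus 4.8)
The plan is to chain together the results established in the preceding subsections of this section, so the proof is essentially a bookkeeping argument.

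First I would invoke Corollary~\ref{cor.exist.an.non-an} to justify that the hypothesis is non-vacuous, but since we are given $(M,[g_{\rm an}])$ with $\lamin(M,[g_{\rm an}])<\lamin(\mS^n)$, the first real step is to apply Proposition~\ref{prop.sup.attain}: under Condition~\eqref{ineq.strict} (which holds here), the supremum of $\cF$ is attained in a spinor $\psi_0$ of class $C^{2,\alpha}$, and after rescaling and adding a kernel element it solves the Euler--Lagrange equation $D^{g_{\rm an}}\psi_0=\lamin(M,[g_{\rm an}])|\psi_0|^{2/(n-1)}\psi_0$ with $\|\psi_0\|_{L^{2n/(n-1)}}=1$. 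Next, Lemma~\ref{dense} (the weak unique continuation principle applied to this nonlinear equation) shows that $M^*:=M\setminus\psi_0^{-1}(0)$ is open and dense, and Lemma~\ref{connected} shows $M^*$ is connected. This gives item~(1), once we also set $g^*:=|D^{g_{\rm an}}\psi_0|^{4/(n+1)}g_0$, which by construction lies in the conformal class of $g_{\rm an}|_{M^*}$, giving item~(1)'s conformality statement; the volume normalization $\vol(M^*,g^*)=1$ follows from the normalization $\|\psi_0\|_{L^{2n/(n-1)}}=1$ together with formula~\eqref{spinormetric} and the conformal transformation of the volume element, giving item~(2).

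For the generalized-Killing property, I would set $\psi:=\psi_0/|\psi_0|$ on $M^*$; by the conformal behaviour of the Dirac operator this satisfies $D^{g^*}\psi=\lamin(M,[g_{\rm an}])\psi$ with $|\psi|\equiv 1$. Since $n=3$, the Proposition in Subsection~\ref{subsec.get.gen.Kil} (any constant-length eigenspinor in dimension $3$ is a generalized Killing spinor) applies, giving item~(3) together with the symmetric endomorphism field $A$ satisfying $\na_X\psi=A(X)\cdot\psi$. It remains to upgrade smooth/$C^{2,\alpha}$ regularity to analyticity: because $[g_{\rm an}]$ is an analytic conformal class and $g_{\rm an}|_{M^*}$ may be taken analytic (Definition~\ref{def.conf.ana} and the lemma establishing (a)$\Leftrightarrow$(b)), the Lemma on analyticity of solutions of $D\phi=c|\phi|^\alpha\phi$ applies on $M^*$ (where $\psi_0$ is nonvanishing, so the coefficient $c|\phi|^{2/(n-1)}$ is analytic), showing $\psi_0$, hence $g^*$, hence $\psi=\psi_0/|\psi_0|$, is analytic on $M^*$.

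The main obstacle, and the only genuinely delicate point, is the regularity chain: one must make sure that the $C^{2,\alpha}$ maximizer $\psi_0$ is actually good enough to feed into the analytic elliptic regularity theorem, and that on $M^*$ the relevant nonlinear Dirac equation has genuinely analytic coefficients (which requires $\psi_0$ nonvanishing there, hence the density/connectedness lemmata must come first). Everything else is a direct citation of the results already proved in this section, assembled in the order above.
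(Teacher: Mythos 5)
Your proposal is essentially the paper's own argument; the paper literally prefaces this theorem with ``Summarizing the results of the preceding subsections we obtain'' and supplies no further proof, and you have assembled exactly the intended chain (Prop.~\ref{prop.sup.attain} $\to$ Lemmata~\ref{dense},~\ref{connected} $\to$ the constant-length eigenspinor $\psi=\psi_0/|\psi_0|$ $\to$ the $n=3$ proposition in Subsection~\ref{subsec.get.gen.Kil} $\to$ the analyticity lemma from Subsection~\ref{subsec.real-ana}). You also correctly isolate the one point that actually requires care, namely that the coefficient $|\psi_0|^{2/(n-1)}$ is analytic only where $\psi_0\neq 0$, so the density/connectedness lemmata must precede the regularity argument.

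Two small caveats, for precision. First, the analyticity lemma as stated asks for a \emph{smooth} solution, while Prop.~\ref{prop.sup.attain} only gives $\psi_0\in C^{2,\alpha}$; one should interject a standard elliptic bootstrap on $M^*$ (the nonlinearity $\phi\mapsto|\phi|^{2/(n-1)}\phi$ is smooth away from $0$, so Schauder estimates give $\psi_0\in C^\infty(M^*)$) before invoking Morrey's theorem. You flag this as ``the delicate point'' but do not spell it out. Second, the normalization in item~(2) needs $g^*=|\psi_0|^{4/(n-1)}g_{\rm an}$ rather than the expression $|D^{g_{\rm an}}\psi_0|^{4/(n+1)}g_{\rm an}$ from \eqref{spinormetric}: the Euler--Lagrange equation gives $|D^{g_{\rm an}}\psi_0|^{4/(n+1)}=\lamin^{4/(n+1)}|\psi_0|^{4/(n-1)}$, so the two metrics differ by the constant $\lamin^{4/(n+1)}$, and only the former has $\vol(M^*,g^*)=\|\psi_0\|_{L^{2n/(n-1)}}^{2n/(n-1)}=1$. (This slip is inherited from the paper's own slightly inconsistent notation in the introduction versus \eqref{spinormetric}.) Neither point affects the structure of the argument.
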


Such Riemannian metrics $g_{\rm an}$ exist on each compact $3$-dimensional spin manifold, due to the preceding section.
The corresponding endomorphism $W$ is then analytic as well, and Theorem~\ref{m1} can be applied.
We obtain a Ricci-flat metric of the form $dt^2+g_t$ where $g_0=g^*$ defined on an open neighborhood of 
$\{0\}\times M^*$ in $\mR\times M^*$, and carrying a parallel spinor. Further the mean curvature
of $\{0\}\times M^*$ in this neighborhood is constant and equal to $(2/3)\lamin(M,[g])$.

%%%%%%%%%%%%%%%%%%%%%%%%%%%%%%%%%%%%%%%%%%%%%%%%%%%%%%%%%%%%%
\subsection{Non-analytic examples}\label{non.analytic.non.exist}
%%%%%%%%%%%%%%%%%%%%%%%%%%%%%%%%%%%%%%%%%%%%%%%%%%%%%%%%%%%%%

Here we finally prove the existence of metrics with generalized Killing spinors on manifolds with non-analytic metrics. According to Lemma \ref{cmc.analytic} such manifolds do not embed isometrically as constant mean curvature hypersurfaces in Ricci-flat manifolds. Thus the analyticity 
assumptions in Theorem~\ref{m1} cannot be removed.

\begin{theorem}\label{n-a}
Any $3$-dimensional compact connected spin manifold $M$ with a fixed $C^\infty$-structure has a connected open dense subset $M^*$ 
carrying a smooth Riemannian metric~$g^*$ with a generalized Killing spinor, such that the metric is not analytic 
for any choice of analytic structure on $M$. For this manifold $(M^*,g^*)$ 
the associated formal solution provided by Proposition~\ref{prop.formal.solution} cannot be chosen to be Ricci-flat on a neighborhood
of $\{0\}\times M$, in other words the conclusion of Theorem~\ref{m1} does not hold.
If $M=\mR P^3$ or more generally if $M=\Gamma\backslash \SU(2)$ 
for a non-trivial subgroup $\Gamma$ of $\SU(2)$, then we can find such a 
Riemannian metric $g^*$ defined on the whole manifold $M$.
\end{theorem}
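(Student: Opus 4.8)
The plan is to assemble Theorem~\ref{n-a} from the machinery built up in the preceding subsections, with the only genuinely new input being a ``destructibility'' argument showing the formal Ricci-flat solution cannot be upgraded to an honest germ. First I would recall the setup: by Corollary~\ref{cor.exist.an.non-an} every compact connected spin $M$ of dimension $3$ (its index automatically vanishes in $KO^{-3}(pt)$ since $KO^{-3}(pt)=0$) carries a smooth conformal class $[g_{\rm non-an}]$ which is conformally flat on a nonempty open set $U$, non-conformally-flat on another nonempty open set $V$, and satisfies the strict inequality \eqref{ineq.strict}. Feeding this $g_0:=g_{\rm non-an}$ into Theorem~\ref{theo.minim} and Proposition~\ref{prop.sup.attain} produces the maximizing spinor $\psi_0$ of regularity $C^{2,\alpha}$, and Lemmata~\ref{dense} and~\ref{connected} give that $M^*:=M\setminus\psi_0^{-1}(0)$ is open, connected and dense. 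On $M^*$ we get the metric $g^*:=|D^{g_0}\psi_0|^{4/(n+1)}g_0$ (unit volume after rescaling) conformal to $g_0|_{M^*}$, and by the Proposition of Subsection~\ref{subsec.get.gen.Kil} the normalized spinor $\psi:=\psi_0/|\psi_0|$ is a generalized Killing spinor on $(M^*,g^*)$ with symmetric stress-energy tensor $W$ of constant trace $\tr W=-\lamin(M,[g_0])$.

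Next I would verify that $g^*$ is not analytic for \emph{any} analytic structure on $M$. The open sets $U,V$ are dense-complemented so by density of $M^*$ both $U\cap M^*$ and $V\cap M^*$ are nonempty open subsets of $M^*$. On $U\cap M^*$ the conformal class $[g^*]=[g_0]$ is conformally flat, hence its Schouten tensor vanishes there; on $V\cap M^*$ it does not. If some analytic structure made $g^*$ analytic, then $[g^*]$ would be an analytic conformal class on $M^*$ in the sense of Definition~\ref{def.conf.ana}, and since $M^*$ is connected Lemma~\ref{lemma.conf.ext} would force conformal flatness on all of $M^*$, contradicting the behavior on $V\cap M^*$. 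Hence $g^*$ is non-analytic.

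Then comes the main point, the failure of the conclusion of Theorem~\ref{m1}. The formal solution of Proposition~\ref{prop.formal.solution} always exists, so what must be shown is that \emph{no} genuine Ricci-flat germ $dt^2+g_t$ on a neighborhood of $\{0\}\times M^*$ (agreeing with $g^*$ and $W$ at $t=0$) can exist. The argument is by contradiction, exactly as sketched in the introduction: if such a germ $(\Z,\gz)$ existed, it would be Ricci-flat and hence real-analytic as a Riemannian manifold (\cite{besse}, \cite{kdt}); then $(M^*,g^*)$ sits inside $(\Z,\gz)$ as a hypersurface with second fundamental form $W$, and $\tr W$ is constant, so $(M^*,g^*)$ is a constant-mean-curvature hypersurface in an analytic Riemannian manifold. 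By Lemma~\ref{cmc.analytic} such a hypersurface is analytic, i.e.\ $M^*$ acquires an analytic structure in which $g^*$ is analytic — contradicting the non-analyticity just established. This contradiction shows the formal Ricci-flat solution cannot be realized, so the conclusion of Theorem~\ref{m1} genuinely fails for $(M^*,g^*)$; in particular the analyticity hypothesis there is essential.

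Finally, for the quotients $M=\Gamma\backslash\SU(2)$ (including $\mathbb{R}P^3$) I would refine the choice of $g_0$ so that $M^*=M$, i.e.\ so the maximizing spinor is nowhere zero. The idea is to start from the standard metric $\sigma^3$ with the spin structure carrying a $2$-dimensional space of Killing spinors to the constant $-1/2$; these are everywhere nonzero eigenspinors of $D^{\sigma^3}$ for the eigenvalue $\lamin(\Gamma\backslash\SU(2),[\sigma^3])$, and for this conformal class the infimum in \eqref{def.inf} is actually attained at the smooth metric $\sigma^3$ itself with a nowhere-vanishing maximizing spinor (here one uses that equality in Hijazi's estimate \eqref{hijazi} is attained, together with $\lamin(\Gamma\backslash\SU(2),[\sigma^3])<\lamin(\mS^3)$). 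Perturbing $\sigma^3$ to a metric $g_1$ that is conformally flat near one point and non-conformally-flat on an open set, and $C^1$-close enough that \eqref{ineq.strict} persists and that the maximizer stays nowhere zero (the nowhere-vanishing condition is $C^1$-open for the $C^{2,\alpha}$ maximizer, since $\psi_0$ depends continuously on $g_1$ and has constant-sign length on a compact manifold), we get $M^*=M$ while retaining all the previous properties; the rest of the argument goes through verbatim. The step I expect to be the main obstacle is precisely this last persistence-of-non-vanishing claim for the maximizer under perturbation — it requires knowing that the $C^{2,\alpha}$ maximizing spinor varies continuously (in a strong enough topology) with the background metric within the regime where \eqref{ineq.strict} holds, which in turn rests on the uniqueness/compactness structure behind Proposition~\ref{prop.sup.attain}.
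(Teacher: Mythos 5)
Your treatment of the main body of the theorem is correct and is essentially the paper's own argument: Corollary~\ref{cor.exist.an.non-an} supplies $[g_{\rm non-an}]$, Theorem~\ref{theo.minim} together with Lemmata~\ref{dense} and~\ref{connected} produces the generalized metric $g^*$ on a connected open dense $M^*$ carrying a constant-length eigenspinor, Subsection~\ref{subsec.get.gen.Kil} upgrades it to a generalized Killing spinor with $\tr W$ constant, Lemma~\ref{lemma.conf.ext} rules out analyticity, and the contradiction with Lemma~\ref{cmc.analytic} (Ricci-flat $\Rightarrow$ analytic $\Rightarrow$ CMC hypersurfaces analytic) kills any genuine Ricci-flat germ. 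All of this matches the paper.

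The gap is exactly where you suspect it: the final claim that for $M=\Gamma\backslash\SU(2)$ one can arrange $M^*=M$. Your argument rests on the assertion that the nowhere-vanishing of the maximizer is a ``$C^1$-open'' condition because ``$\psi_0$ depends continuously on $g_1$.'' No such continuous dependence is available: maximizers of $\cF_{2n/(n+1)}$ are not known to be unique, and Proposition~\ref{prop.sup.attain} gives existence only, so there is no well-defined map $g\mapsto\psi_0(g)$ whose continuity you could invoke. The paper closes this gap with a compactness-plus-rigidity argument rather than a continuity argument: for a sequence of non-analytic metrics $g_i\to\si^3$ in $C^\infty$, it takes \emph{arbitrary} maximizers $\psi_i$ normalized in $L^3$, establishes uniform $C^0$ bounds (via \cite[Theorem~6.1]{ammann:09}, whose proof extends to the critical exponent $p=2n/(n-1)$) and then uniform $C^{1,\alpha}$ bounds (\cite[Theorem~5.2]{ammann:09}), extracts a subsequence converging to a solution $\overline\psi$ of the limiting Euler--Lagrange equation on $(\mR P^3,\si^3)$, verifies $\cF(\overline\psi)=\mu_q(\mR P^3,\si^3)$ so that $\overline\psi$ is itself a maximizer, and then applies the classification result \cite[Prop.~4.1]{ammann:09}: the conformal map $A:\mS^3\to\mS^3$ appearing there must be an isometry because it lifts a map of $\mR P^3$, forcing $\overline\psi$ to be a Killing spinor to the constant $-1/2$, hence nowhere vanishing. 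Uniform convergence then shows $\psi_i$ is nowhere vanishing for $i$ large. Without this subsequential compactness and, crucially, the rigidity statement identifying \emph{every} maximizer of the round quotient as a Killing spinor, your persistence claim does not go through; as written, the last paragraph of your proposal is an accurate diagnosis of the difficulty but not a proof of it.
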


\begin{proof}
By Corollary~\ref{cor.exist.an.non-an} there exists a smooth 
conformal class $[g_{\rm non-an}]$ whose Schouten tensor 
vanishes on a non-empty open set and does not vanish on another 
open set, and for which $\lamin(M,[g_{\rm non-an}])<\lamin(\mS^3)$.
The infimum in \eqref{def.inf} is then attained, according to 
Theorem~\ref{theo.minim}, at a generalized metric $g^*$, 
which is a smooth Riemannian metric on a connected dense 
open subset $M^*$ of $M$. It is clear that the restricted 
conformal class $[g_{\rm non-an}|_{M^*}]$ is not analytic, 
and thus the metric $g^*$ cannot be analytic either. 
Theorem~\ref{theo.minim} provides moreover a Dirac eigenspinor 
of constant length on $(M^*,g^*)$ which is, due to 
Subsection~\ref{subsec.get.gen.Kil}, a generalized Killing spinor. 
Furthermore, the trace of the associated symmetric tensor 
$W\in \End(TM)$ is constant and equal to $-(2/3)\lamin(M,[g_{\rm non-an}])$. 

If the formal solution provided by Proposition~\ref{prop.formal.solution} (for $M^*$
instead of $M$) were
Ricci-flat in a neighborhood of $\{0\}\times M^*$, then $M^*$ would be a hypersurface of constant mean curvature in a $4$-dimensional
Ricci-flat manifold. As Ricci-flat metrics are analytic in a suitable analytic structure, Lemma~\ref{cmc.analytic} 
would imply that $g^*$ was analytic, which is a contradiction.

Now assume that $M=\mR P^3$. We take a sequence of non-analytic
metrics $g_i$ (constructed similarly as above) converging in the $C^\infty$-topology to the standard round metric $\si^3$ on $\mR P^3$. 
As the functional $\cF_q^g$ depends continuously on $g$ in the $C^1$-topology, 
we see for $q=2n/(n+1)$
\begin{equation}\label{ineq.RP3}
\overline{\mu}:=\liminf_{i\to \infty} \mu_q(\mR P^3,g_i)\geq  \mu_q(\mR P^3,\si^3)=\tfrac23 \left(\tfrac2{\om_3}\right)^{2/3}> \mu_q(\mS^3)=\tfrac23 \left(\tfrac1{\om_3}\right)^{2/3}
\end{equation}

Now let $\psi_i$ be a maximizing spinor on $(\mR P^3,g_i)$ with 
$L^p$-norm $1$, $p=2n/(n-1)=3$. These $\psi_i$ are uniformly bounded
in the $C^0$-norm. This uniform $C^0$-boundedness follows from 
\cite[Theorem~6.1]{ammann:09} whose proof is also valid 
for $p=2n/(n-1)$ although the formulation of \cite[Theorem~6.1]{ammann:09} 
assumed $p<2n/(n-1)$. Then \cite[Theorem~5.2]{ammann:09} implies that 
$\psi_i$ is a bounded sequence in $C^{1,\alpha}$ for any $\alpha\in(0,1)$.

After passing to a suitable subsequence we then see that
$\psi_i$ converges to a solution $\overline{\psi}$ of 
\begin{align*}
D^{\si^3}\overline\psi = \overline{\mu}^{-1}|\overline\psi| \overline\psi,
&& \|\overline\psi\|_{L^3(\mR P^3,\si^3)}=1.
\end{align*}

Calculating $\cF_{2n/(n+1)}(\overline\psi)=\bar\mu$ we conclude 
$\overline{\mu}=\mu_q(\mR P^3,\si^3)$.
Using the regularity theorem \cite[Prop.~5.1]{ammann:09} one sees that $\overline\psi$ is $C^2$. We now apply \cite[Prop.~4.1]{ammann:09} where 
$\psi\in\Gamma(\Sigma \mS^3)$ is the pullback of $\overline\psi$ to $\mS^3$.
One calculates $\cF_{2n/(n+1)}(\psi)=\mu_q(\mS^3)$, thus
$\psi$ is a maximizing spinor on $\mS^3$.
The conformal map $A:\mS^3\to \mS^3$ in the conclusion 
of  \cite[Prop.~4.1]{ammann:09}
has to be an isometry as it is the lift of a map $\mR P^3\to \mR P^3$.
Thus  \cite[Prop.~4.1]{ammann:09} implies that $\overline\psi$ 
is a Killing spinor to the Killing constant $-1/2$.
As such a Killing spinor nowhere vanishes, $\psi_i$ nowhere vanishes 
for large $i$. 
 
The other quotients $\Gamma\backslash \SU(2)$ are completely analogous. 
\end{proof}

Using products with manifolds carrying parallel spinors, one can easily obtain in every dimension $n\ge 3$ examples of $n$-dimensional manifolds 
with generalized Killing spinors which do not embed isometrically as hypersurfaces in manifolds with parallel spinors. More precisely we have
the following:

\begin{lemma}\label{product}
Let $(M^*,g^*)$ be a 3-dimensional non-analytic Riemannian manifold with generalized Killing spinors given by Theorem \ref{n-a}.
Then the Riemannian product $$(M^*,g^*)\times (\mR^{n-3},g_{\mathrm{eucl}})$$ carries a generalized Killing spinor $\Psi$ but can not be embedded
isometrically as a hypersurface in any manifold with parallel spinors which restrict to $\Psi$. 
\end{lemma}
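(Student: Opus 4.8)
The plan is to manufacture the generalized Killing spinor on the product by tensoring $\psi$ with a parallel spinor on the Euclidean factor, notice that the resulting stress-energy tensor has constant trace, and then obstruct the embedding using the fact that a constant-mean-curvature hypersurface in a Ricci-flat (hence analytic) manifold is itself analytic (Lemma \ref{cmc.analytic}), whereas the product $(M^*,g^*)\times(\mathbb R^{n-3},g_{\mathrm{eucl}})$ is not analytic because the factor $(M^*,g^*)$ is not.

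First I would construct $\Psi$. Let $\psi$ be the generalized Killing spinor on $(M^*,g^*)$ furnished by Theorem \ref{n-a}, with symmetric stress-energy endomorphism $W$ of constant trace, and let $\psi_0$ be a parallel unit spinor on flat $\mathbb R^{n-3}$. Using the standard description of the spin bundle of a Riemannian product as (a summand of) the external tensor product of the factor spin bundles, set $\Psi:=\psi\otimes\psi_0$ on $(N,h):=(M^*,g^*)\times(\mathbb R^{n-3},g_{\mathrm{eucl}})$. Since $\nabla\psi_0=0$, the Leibniz rule for the product spin connection gives $\nabla^h_{(X,Y)}\Psi=(\nabla^{g^*}_X\psi)\otimes\psi_0=\tfrac12\big(W(X)\cdot\psi\big)\otimes\psi_0$, which under the product Clifford module structure is $\tfrac12\,\widetilde W(X,Y)\cdot\Psi$ with $\widetilde W:=W\oplus 0$ the extension of $W$ by zero on $T\mathbb R^{n-3}$. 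Thus $\Psi$ is a nowhere vanishing generalized Killing spinor on $(N,h)$ whose stress-energy tensor is $\widetilde W$, and $\mathrm{tr}(\widetilde W)=\mathrm{tr}(W)$ is constant. The one delicate point here is the sign convention for Clifford multiplication on a Riemannian product; it is harmless, and with the convention of \eqref{gks} it merely dictates the choice of $\psi_0$.

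Next, suppose for contradiction that $(N,h)$ embeds isometrically as a hypersurface in a spin manifold $(\mathcal Z^{n+1},g^{\mathcal Z})$ carrying a parallel spinor $\Phi$ with $\Phi|_N=\Psi$. Then $g^{\mathcal Z}$ is Ricci-flat, hence analytic for a suitable analytic structure on $\mathcal Z$. By \eqref{spin}, the Weingarten tensor of $N$ in $\mathcal Z$ equals the stress-energy tensor $\widetilde W$ of $\Psi$ (uniqueness of the stress-energy tensor being clear, since $\Psi$ is nowhere zero), so $N$ has constant mean curvature $\tfrac1n\mathrm{tr}(\widetilde W)$. By Lemma \ref{cmc.analytic}, $N$ with its induced metric $h$ is analytic for some analytic structure $\mathcal A$ on $N$.

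Finally I would extract the contradiction. The map $\iota:=\mathrm{id}_{M^*}\times(-\mathrm{id}_{\mathbb R^{n-3}})$ is an isometric involution of $(N,h)$; since isometries of analytic Riemannian manifolds are analytic, $\iota$ is analytic with respect to $\mathcal A$, and its fixed-point set $F=M^*\times\{0\}$ is therefore an analytic submanifold of $(N,\mathcal A)$, on which $h$ restricts to an analytic metric. Under the obvious identification $F\cong M^*$ this restriction is exactly $g^*$, so $g^*$ would be analytic for some analytic structure on $M^*$, contradicting Theorem \ref{n-a}; hence no such embedding exists. I expect this last step to be the main obstacle: Lemma \ref{cmc.analytic} only yields an analytic structure on the total space $N$, which a priori is unrelated to the product structure, so one cannot naively restrict $h$ to a slice $M^*\times\{\mathrm{pt}\}$. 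Realising $M^*\times\{0\}$ as the fixed-point set of the reflection of the Euclidean factor is what legitimately transports analyticity down to the $(M^*,g^*)$ factor.
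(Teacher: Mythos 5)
Your proof follows the same overall strategy as the paper: tensor $\psi$ with a parallel spinor on the flat factor to produce a generalized Killing spinor $\Psi$ with stress-energy $\widetilde W = W\oplus 0$ of constant trace, then argue that an embedding into a manifold with a parallel spinor restricting to $\Psi$ would force $N$ to be a constant mean curvature hypersurface in a Ricci-flat (hence analytic) ambient manifold, so $N$ would be analytic by Lemma~\ref{cmc.analytic}. Where the paper simply asserts ``each factor of $Z$ is then analytic,'' you have correctly identified that the analytic structure from Lemma~\ref{cmc.analytic} on the total space is a priori unrelated to the product splitting, and your use of the isometric involution $\iota = \mathrm{id}_{M^*}\times(-\mathrm{id})$ together with Myers--Steenrod and the analyticity of fixed-point sets is a legitimate and clean way to transport the analytic structure down to $M^*\times\{0\}\cong M^*$; this fills in a step the paper's proof leaves implicit.
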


\begin{proof}
Let $p_1^*(\Sigma M^*)$ and $p_2^*(\Sigma \mR^{n-3})$ denote the pullbacks to $Z:=M^*\times \mR^{n-3}$
of the spin bundles of $(M^*,g^*)$ and $(\mR^{n-3},g_{\mathrm{eucl}})$ with respect to the standard projections. It is a standard fact
that the spin bundle $\Sigma Z$ is isomorphic to $p_1^*(\Sigma M^*)\otimes p_2^*(\Sigma \mR^{n-3})$ if $n$ is odd
and to  $p_1^*(\Sigma M^*)\otimes p_2^*(\Sigma \mR^{n-3})\otimes \mC^2$ if $n$ is even, and this isomorphism preserves the 
spin connections. The isomorphism can be chosen such that in the first case, the Clifford product is given by 
$$(X_1,X_2)\.(\phi\otimes\psi)=(X_1\.\phi)\otimes\psi+\phi\otimes(X_2\.\omega^\mC\.\psi),$$
where $\omega^\mC$ is the complex volume form in the Clifford algebra of $\mR^{n-3}$.
In the second case, the Clifford product is given by 
$$(X_1,X_2)\.(\phi\otimes\psi\otimes v)=(X_1\.\phi)\otimes\psi\otimes a(v)+\phi\otimes(X_2\.\psi)\otimes b(v),$$
for every $v\in\mC^2$, where $a=\begin{pmatrix}1&0\\0&-1\end{pmatrix}$ and  $b=\begin{pmatrix}0&1\\1&0\end{pmatrix}$. 
The first assertion now follows immediately: take any generalized Killing spinor $\phi$ on $M^*$ 
satisfying $\nabla_X\phi=W(X)\.\phi$ for all $X\in T M^*$ and let $\psi$ be a parallel spinor on
$\mR^{n-3}$. One can of course assume that $\omega^\mC\.\psi=\psi$ if $n$ is odd. Then 
$\Psi:=\phi\otimes\psi$ (resp.\ $\Psi:=\phi\otimes\psi\otimes\begin{pmatrix}1\\0\end{pmatrix}$) is a generalized Killing spinor on $Z$
for $n$ odd (resp.\ even), with associated tensor $\bar W=\begin{pmatrix}W&0\\0&0\end{pmatrix}$.

To prove the second assertion, assume that $Z$ is a hypersurface in some spin manifold $\bar Z$ and that $\Phi$ is
a parallel spinor on $\bar Z$ restricting to $\Psi$ on $Z$. The second fundamental form of $Z$ is $\bar W$, which has
constant trace by construction. Thus $Z$ has constant mean curvature, so is analytic by Lemma~\ref{cmc.analytic}. 
Each factor of $Z$ is then analytic, contradicting the non-analyticity of $M^*$.
\end{proof}

%%%%%%%%%%%%%%%%%%%%%%%%%%%%%%%%%%%%%%%%%%%%%%%%%%%%%%%%%%%%%%%%%%%%%%%%%
\bibliographystyle{amsplain}
%%%%%%%%%%%%%%%%%%%%%%%%%%%%%%%%%%%%%%%%%%%%%%%%%%%%%%%%%%%%%%%%%%%%%%%%%

\bibliographystyle{amsplain}

\end{document}